\documentclass[11pt,a4paper]{article}
\usepackage[T1]{fontenc}
\usepackage[utf8]{inputenc}
\usepackage[english]{babel}
\usepackage{lmodern,microtype}
\usepackage[margin=25mm,headheight=14pt]{geometry}
\usepackage{amsmath,amssymb,amsthm,mathtools,enumitem,booktabs,array}
\usepackage{xurl}
\usepackage[hidelinks,unicode]{hyperref}
\usepackage{fancyhdr}
\setlist[enumerate]{itemsep=2pt,topsep=4pt}
\newtheorem{theorem}{Theorem}[section]
\newtheorem{lemma}[theorem]{Lemma}
\newtheorem{proposition}[theorem]{Proposition}
\newtheorem{corollary}[theorem]{Corollary}
\theoremstyle{remark}

\newcommand{\F}{\mathbb{F}}
\newcommand{\om}{\omega}
\newcommand{\indab}{\mathrm{i}}
\newcommand{\rad}{\operatorname{rad}}
\newcommand{\rank}{\operatorname{rank}}
\newcommand{\Hom}{\operatorname{Hom}}
\newcommand{\eps}{\varepsilon}
\hypersetup{pdftitle={Eventual exactness and rigidity for abelian covers of groups},
 pdfauthor={Guillaume Lecomte},
 pdfsubject={Exact abelian covering numbers, extremal rigidity},
 pdfkeywords={abelian covers, noncommuting groups, symplectic geometry, extremal rigidity}}
\title{Eventual exactness and rigidity\\for abelian covers of groups}
\author{Guillaume Lecomte\thanks{ORCID: \href{https://orcid.org/0009-0003-3532-3379}{0009-0003-3532-3379}.}}
\date{11 October 2026}
\begin{document}
\maketitle

\begin{abstract}
For a group $G$, let $a(G)$ denote the minimum number of abelian
subgroups covering $G$, and let $\om(G)$ be the maximum size of a
pairwise noncommuting set. We prove the exact formula
\[
 \sup_{\om(G)\le n}a(G)=2^{\lfloor(n-1)/2\rfloor}+1
 \qquad(n\ge63).
\]
Equality holds precisely for groups with derived subgroup of order two
and binary symplectic central quotient of dimension
$2\lfloor(n-1)/2\rfloor$.
The proof combines centralizer descent with a structural separation
of nonscalar alternating commutator maps.
\end{abstract}

\noindent\textbf{Keywords.} Abelian cover; noncommuting graph; symplectic space;
alternating map; extremal rigidity.
\par\smallskip
\noindent\textbf{Mathematics Subject Classification (2020).}
20D60; 20D15; 05C15.

\section{Introduction}\label{sec:introduction}

Two measures of the noncommutativity of a group $G$ are the maximum
size $\om(G)$ of a pairwise noncommuting set and the minimum number
$a(G)$ of abelian subgroups needed to cover $G$. Necessarily
$\om(G)\le a(G)$, but the gap may be exponential. Erd\H{o}s asked
for its extremal size \cite{Erdos1997}. For $n\ge1$ put
\[
 h(n)=\sup_{\om(G)\le n}a(G),\qquad
 L_n=2^{\lfloor(n-1)/2\rfloor},
\]
where the supremum ranges over all groups and $a(G)=\om(G)=1$
for abelian groups. Equivalently, $a(G)$ and $\om(G)$ are the
chromatic and clique numbers of the noncommuting graph on
$G\setminus Z(G)$ when $G$ is nonabelian
\cite{FaberLaverMcKenzie1978,AbdollahiAkbariMaimani2006}.

Neumann \cite{Neumann} proved that finite $\om(G)$ forces
$[G:Z(G)]<\infty$. Pyber \cite{Pyber1987} established an
exponential index bound, while extraspecial groups supply an
exponential lower bound; hence $h(n)=2^{\Theta(n)}$.
Related covering questions appear in
\cite{Brown1988,Chin2005,PodoskiSzegedy2002,WangLiu2016}.
The author's earlier work \cite[Theorem~2.2]{Lecomte2026} gave
\[
 \log_2 h(n)=n/2+O\bigl(\sqrt n\,(\log(n+2))^3\bigr).
\]
We now determine $h(n)$ exactly for every $n\ge63$ and
characterize all extremal groups.

Here $G'$ is the derived subgroup, $Z(G)$ the centre, and
$C_d$ the cyclic group of order $d$.

\begin{theorem}[Exactness and rigidity]\label{thm:parity}
For every integer $n\ge63$, put $m=\lfloor(n-1)/2\rfloor$. Then
\begin{equation}\label{eq:parity-main}
 h(n)=L_n+1=2^m+1.
\end{equation}
A group $G$ with $\om(G)\le n$ attains this bound if and only if
\[
 G'\cong C_2,\qquad G/Z(G)\cong\F_2^{2m}.
\]
Such a group has $\om(G)=2m+1$.
\end{theorem}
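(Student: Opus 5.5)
We split the proof into a lower-bound construction and a matching upper bound with an equality analysis.

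\emph{Construction.} Let $G$ satisfy $G'\cong C_2$ and $G/Z(G)\cong\F_2^{2m}$; for instance, take an extraspecial $2$-group of order $2^{2m+1}$. The commutator induces a nondegenerate alternating form $\beta$ on $V=G/Z(G)$. A subgroup of $G$ is abelian exactly when its image in $V$ is totally isotropic.

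A pairwise noncommuting set $x_1,\dots,x_k$ has images $v_i$ with $\beta(v_i,v_j)=1$ for all $i\ne j$. Suppose a nontrivial dependence $\sum_{i\in S}v_i=0$ holds. Pairing it with $v_j$ for $j\notin S$ gives $|S|\equiv0$. Pairing it with $v_j$ for $j\in S$ gives $|S|-1\equiv0$. These are incompatible unless every index lies in $S$, so $S=\{1,\dots,k\}$ and $k$ is odd. Hence $k\le 2m+1$, and this is attained by the standard configuration ($2m$ independent vectors plus their sum). So $\om(G)=2m+1\le n$.

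For the covering number, every abelian subgroup maps into a Lagrangian. Each nonzero vector is covered by at most $2^{m-1}$ vectors' worth of a Lagrangian: there are $2^m-1$ nonzero vectors in a Lagrangian, against $2^{2m}-1$ nonzero vectors in $V$. This gives $a(G)\ge (2^{2m}-1)/(2^m-1)=2^m+1$. A Lagrangian spread shows the bound is sharp. Thus $h(n)\ge 2^m+1$.

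\emph{Upper bound.} Take $G$ with $\om(G)\le n$. By Neumann's theorem $Z(G)$ has finite index, so $a(G)$ equals the minimum number of totally "commuting" subsets covering $\bar G=G/Z(G)$, with the commutator map $\bar G\times\bar G\to G'$. We proceed by centralizer descent. Choose a noncentral $x$. Then
\[
a(G)\le a(C_G(x))+\#\{\text{abelian pieces covering }G\setminus C_G(x)\},
\]
and elements outside $C_G(x)$ can be organized via cosets. Inducting on $\om$, the descent step trades one unit of $\om$ for a controlled factor. The generic case is where the commutator map is not a scalar alternating form into $C_2$, for example when $|G'|>2$, when the quotient has odd part, or when the induced map is degenerate. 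In that case I aim for the strictly stronger bound $a(G)\le c\cdot 2^{m}$ with $c<1$, once $n\ge63$. The route would be to strengthen the earlier estimate \cite[Theorem~2.2]{Lecomte2026} by exhibiting, whenever the alternating map is nonscalar, two "independent" form components. These components let one build a larger noncommuting set, of size roughly $\log_2 a$ plus a linear gain, so that $\om$ exceeds $n$ unless $a$ is small. The threshold $63$ is where the explicit error terms become smaller than the gap.

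\emph{Scalar case and rigidity.} If the commutator map factors through a nondegenerate alternating form $V\times V\to\F_p$, then $G'$ is cyclic of order $p$. For $p$ odd, the same linear-algebra argument gives $\om\ge$ roughly $p^{\dim/2}$-type growth relative to $a$, which is worse than the binary case. For $p=2$ with $\dim V=2r$ we have $\om=2r+1\le n$, forcing $r\le m$, and $a=2^r+1$ by the spread argument above. Equality then forces $r=m$, which gives the claimed rigidity.

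\emph{Main obstacle.} The hardest step is the structural separation for nonscalar commutator maps. One must show quantitatively that such maps lose at least a constant factor relative to $2^m+1$, uniformly and with explicit constants valid from $n=63$. I would first try a reduction to the case of $G'$ elementary abelian of order $4$ via quotienting by a maximal subgroup of $G'\cap Z(G)$. I would then analyse pencils of alternating forms, where a noncommuting set can use both forms simultaneously.
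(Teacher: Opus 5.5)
Your lower-bound half is correct and is essentially the paper's Lemma~\ref{lem:extraspecial}; your parity argument on linear dependencies is equivalent to the rank computation for $J_t+I_t$. The gap is the upper bound, which carries all of the content. There you give a plan rather than an argument, and two of its concrete ingredients would fail.

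First, the odd primes. For odd $p$ you claim that $\om$ shows ``$p^{\dim/2}$-type growth''. This is false. If $M$ is the Gram matrix of a clique in a symplectic space $\F_p^{2s}$, its entrywise $(p-1)$-st power is $J-I$, so a clique has at most $1+\binom{2s+p-2}{p-1}$ elements, which is polynomial in $s$. What is needed, and what the paper proves, are linear lower bounds with the right constant. For $p=3$ the obvious clique of size $3s+1$ is not enough: since $3>2^{3/2}$, a group with $\om=3s+1$ and $a=3^s+1$ would exceed $L_{3s+1}+1$. This is why Lemma~\ref{lem:scalar} needs the skew-Hadamard clique of size at least $10s/3$ modulo $3$. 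It is also why odd primes require the full isotropic-index bound of Proposition~\ref{prop:isotropic-index} and the scalarization estimates of Lemma~\ref{prop:odd-gap}, which cover nonscalar odd factors and several nonabelian Sylow factors; your sketch only mentions the scalar odd case.

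Second, groups outside class two. A descent that ``trades one unit of $\om$ for a controlled factor'' can at best give $C\cdot 2^{n/2}$ with an uncontrolled constant, so it can never beat $L_n$ by a constant factor. You offer no mechanism for this case. The paper uses Lemma~\ref{lem:rigidity} to find noncommuting $u,v$ with centralizers of index at most $n-2$ and $[u,v]\notin Z(G)$. Lemma~\ref{lem:amplification} then shows that a common centralizer $H$ has $\om(H)\le n/3$ or $n/5$, at only polynomial index cost (Proposition~\ref{prop:descent}). Combined with $\indab(H)\le 8\cdot 2^{\om(H)/2}$ (Lemma~\ref{prop:uniform-index}), this is exponentially below $L_n$.

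Third, binary class-two groups. Your proposed reduction to a smaller $G'$ by factoring out a subgroup $N\le G'\cap Z(G)$ goes the wrong way. One has $a(G/N)\le a(G)$ and $\om(G/N)\le\om(G)$, so a quotient can transfer clique lower bounds, as the two-coordinate map $(g,h)$ does in Lemma~\ref{parity:lem:dual}, but never an upper bound on $a(G)$.

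Moreover, ``nonscalar implies a larger clique'' does not hold uniformly. In the common-image case with interaction kernel $c\ge s-1$, the clique bound the paper can prove is only $4s+1$ or $4s+2$. So $n$ may equal $4s+1$ or $4s+2$, where $L_n=4^s$, while the coset budget gives only $a(\beta)\le 2\cdot4^s-2^c$. There the separation comes from a better cover, not a bigger clique: the zero fibre is a subgroup, and the nonzero fibres are coloured via the Ramsey bound (Lemma~\ref{parity:lem:fibre-colouring}). The small-domain cases, where all scalarizations are nondegenerate or $\dim R=1$, are likewise handled by colouring (Lemma~\ref{parity:lem:ramsey}).

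Without this case analysis --- nonisotropic radicals, exponent-four targets, interactions of rank at least two, the rank-one classification, symplectic peeling and the fibre cover --- the strict inequality $a(G)<L_n$ for nonscalar maps is unproved. With it, both the upper bound and the rigidity statement remain unproved.
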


The lower bound is realized by extraspecial $2$-groups. The
classification includes infinite groups and explains the parity
phenomenon $h(2m+1)=h(2m+2)=2^m+1$ in the stated range.

The proof has three steps. Section~\ref{sec:class2} establishes the
symplectic model and a prime-wise isotropic-index bound.
Section~\ref{sec:tools} uses a two-branch centralizer descent to
reduce the upper bound to class-two $2$-groups. In
Section~\ref{sec:parity}, scalarization and symplectic peeling
exclude all nonscalar commutator maps. The effective comparisons
are collected in Appendix~\ref{app:numerics}. We first prove the
upper bound for even $n\ge64$; the odd case follows from
$L_{n+1}=L_n$ and monotonicity of $h$.

\section{The symplectic model and the index bound}\label{sec:class2}

We begin with a finite model that preserves the covering number,
the clique number and the structural data in Theorem~\ref{thm:parity}.
We then express commutation as an alternating map and compare its
isotropic subgroups with those of the symplectic example.

\begin{lemma}[Finite reduction]\label{lem:finite-reduction}
For every group $G$ with $\om(G)<\infty$, there is a finite group $Q$
such that
\[
 a(Q)=a(G),\qquad \om(Q)=\om(G),\qquad
 Q/Z(Q)\cong G/Z(G),\qquad Q'\cong G'.
\]
\end{lemma}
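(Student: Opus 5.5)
The plan is to pass from $G$ to a finitely generated subgroup carrying the same central quotient, and then to a finite quotient of that subgroup by a central subgroup that meets the derived subgroup trivially. The guiding observation is that $a(\cdot)$, $\om(\cdot)$, the isomorphism type of $G/Z(G)$ and the isomorphism type of $G'$ are all determined by the commutator map $G/Z(G)\times G/Z(G)\to G'$. So it suffices to build a finite $Q$ for which this map is carried over isomorphically.

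\emph{Step 1 (finiteness data).} Since $\om(G)<\infty$, Neumann's theorem \cite{Neumann} gives $[G:Z(G)]<\infty$. Schur's theorem then gives $|G'|<\infty$. Choose a finite transversal $T$ of $Z(G)$ in $G$ and put $H=\langle T\rangle$, so that $G=HZ(G)$.

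\emph{Step 2 (properties of $H$).} Because $G=HZ(G)$, we get $Z(H)=H\cap Z(G)$. The natural map $H/Z(H)\to G/Z(G)$ is therefore an isomorphism. Commutators in $G$ depend only on cosets of $Z(G)$, and every coset meets $T\subseteq H$, so $H'=G'$. The group $H$ is finitely generated and $Z(H)$ has finite index in $H$, so $Z(H)$ is a finitely generated abelian group. Write $Z(H)\cong F\times\mathbb{Z}^r$ with $F$ finite, and note that $H'\cap Z(H)\le F$ because $H'$ is finite.

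\emph{Step 3 (the finite quotient).} Put $k=|F|$ and $N=kZ(H)$. Then $N$ is torsion-free, so $N\cap H'=1$, and $Z(H)/N$ is finite. Since $N$ is central, it is normal in $H$; set $Q=H/N$, which is finite because $[H:N]=[H:Z(H)]\,[Z(H):N]<\infty$.

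I claim $Z(Q)=Z(H)/N$. Suppose $xN$ is central in $Q$. Then $[x,h]\in N\cap H'=1$ for every $h\in H$, so $x\in Z(H)$. Consequently $Q/Z(Q)\cong H/Z(H)\cong G/Z(G)$. Also $Q'=H'N/N\cong H'/(H'\cap N)=H'\cong G'$.

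\emph{Step 4 (preserving $a$ and $\om$).} Compose these identifications into a bijection $\phi\colon G/Z(G)\to Q/Z(Q)$. Elements $x,y$ commute if and only if $[x,y]=1$, and the commutator is computed from the cosets. By the identifications above it is carried injectively from $G$ to $Q$. Hence $\phi$ preserves commutation of cosets in both directions.

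Now $\om$ is the maximum size of a set of pairwise noncommuting cosets, so $\om(Q)=\om(G)$. For $a$, every abelian subgroup lies in a maximal abelian subgroup, and a maximal abelian subgroup contains the centre. Thus $a$ equals the minimum number of pairwise-commuting sets of cosets covering the whole central quotient. This quantity is also preserved by $\phi$, so $a(Q)=a(G)$.

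The main obstacle is Step 3. We must ensure both that $Q$ acquires no new central elements and that no commutator is killed. Both requirements reduce to the single condition $N\cap H'=1$, which is why $N$ is chosen torsion-free, using the finiteness of $H'$ from Schur's theorem.
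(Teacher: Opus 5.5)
Your proposal is correct and follows essentially the same route as the paper: pass to the subgroup generated by a transversal of the centre, then quotient by the $k$-th powers of its centre, with $k$ killing the torsion, so that $N\cap H'=1$ by Schur's theorem. The only difference is presentational: you transfer $a$ and $\om$ from $G$ to $Q$ in one step via the commutator map on central quotients, whereas the paper first compares $G$ with the intermediate subgroup and then that subgroup with $Q$.
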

\begin{proof}
By Neumann's theorem \cite[Theorem~6]{Neumann}, $G/Z(G)$ is
finite. Choose representatives of its cosets and let $F$ be the
subgroup they generate. Thus $G=FZ(G)$. Restriction of an abelian
cover of $G$ to $F$ gives $a(F)\le a(G)$, while multiplying every
member of an abelian cover of $F$ by $Z(G)$ gives the reverse
inequality. Likewise, since central factors do not change
commutators, $\om(F)=\om(G)$. We also have
$F'=G'$ and $Z(F)=F\cap Z(G)$: an element centralizing $F$
automatically centralizes $FZ(G)=G$. Hence
$F/Z(F)\cong G/Z(G)$.

The finitely generated group $F$ has centre of finite index, so
$Z(F)$ is finitely generated; Schur's theorem \cite{Schur} makes
$F'$ finite. Write $Z(F)\cong\mathbb Z^r\oplus T$, with $T$ finite,
and choose a positive integer $k$ divisible by the exponent of $T$.
Put $N=Z(F)^k$ and $Q=F/N$. Then $N\cong k\mathbb Z^r$ is
torsion-free and $N\cap F'=\{1\}$. Since $Z(F)/N$ and $F/Z(F)$
are finite, so is $Q$.

For $x,y\in F$, centrality of $N$ gives
$[xN,yN]=[x,y]N$. As $[x,y]\in F'$ and $N\cap F'=\{1\}$,
\[
 [xN,yN]=1\quad\Longleftrightarrow\quad[x,y]=1.
\]
Consequently, pairwise noncommuting sets project and lift without
changing their maximal cardinality. Images of abelian subgroups
are abelian; conversely, the full preimage in $F$ of any abelian
subgroup of $Q$ is abelian by the displayed equivalence. Thus
$a(Q)=a(F)$ and $\om(Q)=\om(F)$.

Finally $Q'=F'N/N\cong F'$. If $xN\in Z(Q)$, then
$[x,F]\subseteq N\cap F'=1$, hence $x\in Z(F)$. Therefore
$Z(Q)=Z(F)/N$ and
$Q/Z(Q)\cong F/Z(F)\cong G/Z(G)$, as required.
\end{proof}

\paragraph{Conventions.}
Except when explicitly stated otherwise, groups are finite from
this point onward. Write $\indab(G)$ for the minimum index of an
abelian subgroup, $[x,y]=x^{-1}y^{-1}xy$ and $x^y=y^{-1}xy$.
For $S\subseteq G$, its centralizer is $C_G(S)$ and $\om(S)$
denotes its maximum pairwise noncommuting subset size.
``Class two'' means nilpotency class at most two. Unsubscripted logarithms are natural.

\subsection{Commutator maps and the scalar model}

For class-two groups, commutation is encoded by an alternating map on
the central quotient \cite[Section~2]{BrooksbankMaglioneWilson2017}.

Let $\F_q$ denote the field of order $q$. Fix a prime $p$,
and consider a biadditive alternating map
$\beta:V\times V\to W$ between finite abelian $p$-groups,
written additively. Thus $\beta(x,x)=0$ for all $x\in V$.
We call $x,y$ \emph{orthogonal} if $\beta(x,y)=0$, and a subgroup
$A\le V$ \emph{isotropic} if $\beta(A,A)=0$.
The analogues of our group invariants are $a(\beta)$, the
minimum size of an isotropic cover of $V$;
$\om(\beta)$, the maximum size of a pairwise nonorthogonal set;
and $p^{\tau(\beta)}$, the minimum index of an isotropic subgroup.

For $R\le V$, let $\beta|_R$ denote restriction to
$R\times R$, and put
$C_R(X)=\{r\in R:\beta(r,X)=0\}$,
$C_R(x)=C_R(\{x\})$ and $\rad\beta=C_V(V)$.
Passing to $V/\rad\beta$ preserves all three invariants:
cliques project and lift, while isotropic subgroups can be enlarged
by the radical. We call $\beta$ \emph{reduced} when
$\rad\beta=0$ and its values generate $W$.
For subsets $X,Y\subseteq V$, the symbol $\beta(X,Y)$ denotes
the subgroup generated by their pairings.

For a class-two $p$-group $P$, set $V=P/Z(P)$ and $W=P'$.
The commutator map $\beta_P:V\times V\to W$ is alternating,
has zero radical, and its values generate $W$.
Abelian subgroups containing $Z(P)$ correspond exactly to isotropic
subgroups of $V$. Enlarging any abelian subgroup by $Z(P)$ shows that
$a(P)=a(\beta_P)$ and $\indab(P)=p^{\tau(\beta_P)}$;
commutator-preserving projection and lifting give
$\om(P)=\om(\beta_P)$.

For an abelian direct factor $A$, these invariants are preserved:
covers extend and restrict, commutation depends only on the
$P$-coordinates, and
$|B|\le|\operatorname{pr}_P B|\,|A|$ for abelian $B\le P\times A$.

The binary model has the following exact invariants; related clique
bounds appear in \cite{WangLiu2016}.
The covering calculation also appears in \cite[Lemma~4.6]{Lecomte2026}.

\begin{lemma}[The classical extraspecial example]\label{lem:extraspecial}
For each $m\ge1$, an extraspecial group $E_m$ of order $2^{1+2m}$ satisfies
\[
 \om(E_m)=2m+1,\qquad a(E_m)=2^m+1,\qquad \indab(E_m)=2^m.
\]
\end{lemma}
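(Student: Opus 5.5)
We pass to the symplectic model. Since $E_m$ has class two with $Z(E_m)=E_m'\cong C_2$, the discussion above gives $a(E_m)=a(\beta)$, $\om(E_m)=\om(\beta)$ and $\indab(E_m)=2^{\tau(\beta)}$. Here $\beta$ is a nondegenerate alternating form on $V=\F_2^{2m}$ with values in $\F_2$. The three claims then become statements about this symplectic space, and we treat them in turn.

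\emph{Index.} An isotropic subspace $A$ satisfies $A\subseteq A^\perp$. Nondegeneracy gives $\dim A^\perp=2m-\dim A$, so $\dim A\le m$. A Lagrangian spanned by half of a symplectic basis attains $\dim A=m$. Hence $\tau(\beta)=m$ and $\indab(E_m)=2^m$.

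\emph{Clique number.} For the lower bound, take a symplectic basis $e_i,f_i$ and set
\[
 x_1=e_1,\quad x_2=f_1,\quad x_3=e_1+f_1+e_2,\quad x_4=e_1+f_1+f_2,\ \dots
\]
More uniformly, we seek $2m+1$ vectors whose Gram matrix is $J-I$, the all-ones matrix minus the identity, over $\F_2$. For the upper bound, suppose $v_1,\dots,v_k$ are pairwise nonorthogonal, with Gram matrix $J_k-I_k$. If $k$ is even, $J_k-I_k$ is invertible over $\F_2$, because $(J-I)^2=(k-2)J+I=I$. Then the $v_i$ are independent, so $k\le 2m$. If $k$ is odd, $J_k-I_k$ has rank $k-1$. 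Indeed, its kernel is spanned by the all-ones vector, since $(J-I)\mathbf 1=(k-1)\mathbf 1=0$. Thus $v_1,\dots,v_{k-1}$ are independent, and $k\le 2m+1$. The same rank computation shows that $e_1,f_1,\dots,e_m,f_m$ can be modified into $2m$ independent vectors with Gram matrix $J_{2m}-I_{2m}$; this works because that matrix is nondegenerate and alternating, hence congruent to the standard form. Adding $v_{2m+1}=\sum_{i\le 2m}v_i$ gives pairings $\beta(v_{2m+1},v_j)=2m-1=1$. So $\om(E_m)=2m+1$.

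\emph{Covering number.} This is the main step. For the lower bound, let $A_1,\dots,A_k$ be an isotropic cover of $V$. We may enlarge each $A_i$ to a Lagrangian, so $|A_i|\le 2^m$. Each member contains $0$, and the nonzero vectors number $2^{2m}-1$. Hence
\[
 k(2^m-1)\ge 2^{2m}-1,
\]
which forces $k\ge 2^m+1$. For the upper bound, we need $2^m+1$ Lagrangians that intersect pairwise trivially, i.e.\ a symplectic spread. We realize it by identifying $V$ with $\F_{2^m}^2$ and using the form
\[
 \beta\bigl((a,b),(c,d)\bigr)=\mathrm{Tr}(ad-bc),
\]
which is nondegenerate and alternating. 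The $2^m+1$ one-dimensional $\F_{2^m}$-lines are isotropic, since $ad-bc=0$ on a line. They are $\F_2$-subspaces of dimension $m$ and partition $V\setminus\{0\}$. The remaining task is to check that this form is equivalent to $\beta_{E_m}$. This follows because all nondegenerate alternating forms on $\F_2^{2m}$ are equivalent. Alternatively, we may cite \cite[Lemma~4.6]{Lecomte2026}. The expected obstacle is only verifying nondegeneracy of the trace form, which follows from nondegeneracy of the trace pairing on $\F_{2^m}$.
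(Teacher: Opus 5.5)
Your proposal is correct and follows essentially the same route as the paper: reduction to the nondegenerate binary form, the $J+I$ Gram-matrix rank argument for $\om$, counting nonzero vectors against totally isotropic subspaces of dimension at most $m$ for the lower bound on $a$, and the trace-form spread on $\F_{2^m}^2$ for the upper bound. The differences are cosmetic: you realize $J_{2m}+I_{2m}$ on a basis and append the sum vector instead of quotienting $J_{2m+1}+I_{2m+1}$ by its radical, and you use half a symplectic basis rather than a spread member for the index. One small point: $(J-I)\mathbf 1=0$ alone does not show that the odd-$k$ kernel is spanned by $\mathbf 1$, but you do not need that claim, since $k-1\le 2m$ already follows from the even case applied to $v_1,\dots,v_{k-1}$.
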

\begin{proof}
The commutator induces a nondegenerate alternating form $b$ on
$V=E_m/Z(E_m)\cong\F_2^{2m}$. Since commutation depends only
on the images in $V$, it suffices to compute the corresponding
invariants of $b$.

For concreteness, one may take the binary Heisenberg group
$E_m=\F_2\times\F_2^m\times\F_2^m$, with multiplication
\[
 (z,u,v)(z',u',v')=(z+z'+u\cdot v',\ u+u',\ v+v').
\]
Its centre and derived subgroup both equal
$\F_2\times\{0\}\times\{0\}$, and the commutator induces the
nondegenerate alternating form
$b((u,v),(u',v'))=u\cdot v'+u'\cdot v$.
For $t$ pairwise nonorthogonal vectors, the Gram matrix is
$J_t+I_t$. Over $\F_2$ its kernel consists of the constant
vectors $c\mathbf 1$ satisfying $c(t-1)=0$; hence its rank is
$t$ for even $t$ and $t-1$ for odd $t$. As this rank is at most
$2m$, we obtain $t\le2m+1$. For the converse, the alternating
matrix $J_{2m+1}+I_{2m+1}$ has rank $2m$ and radical spanned
by $\mathbf 1=(1,\ldots,1)$. Its quotient by the radical is a
nondegenerate alternating space of dimension $2m$. The images
of the standard basis vectors are pairwise distinct (since
$e_i+e_j\ne\mathbf 1$ for $2m+1\ge3$) and pairwise
nonorthogonal. All nondegenerate alternating forms of dimension
$2m$ over $\F_2$ are isometric, giving
$\om(E_m)=2m+1$.

Identify $V$ with $\F_{2^m}^2$ and take
$b((x,y),(x',y'))=\operatorname{Tr}_{\F_{2^m}/\F_2}
(xy'-x'y)$. This form is nondegenerate. Each of the
$2^m+1$ one-dimensional $\F_{2^m}$-subspaces is an
$m$-dimensional totally isotropic $\F_2$-subspace, and these
subspaces partition $V\setminus\{0\}$. They therefore give
an isotropic cover of size $2^m+1$.
Conversely, every totally isotropic subspace has dimension at
most $m$ and contains at most $2^m-1$ nonzero vectors. Counting
all $2^{2m}-1$ nonzero vectors shows that any isotropic cover
has at least
\[
 \left\lceil\frac{2^{2m}-1}{2^m-1}\right\rceil=2^m+1
\]
members. Finally, a totally isotropic subspace has order at
most $2^m$, and the spread exhibits one of that order, so its
least possible index in $V$ is $2^m$. Taking preimages in
$E_m$ establishes $a(E_m)=2^m+1$ and
$\indab(E_m)=2^m$.
\end{proof}

Taking $m=\lfloor(n-1)/2\rfloor$ gives $h(n)\ge L_n+1$ for $n\ge3$.
The same calculation applies to every nondegenerate scalar form:
in dimension $2s$ over $\F_p$, its isotropic covering number is
$p^s+1$ and its least isotropic index is $p^s$.
For $p=2$, its clique number is exactly $2s+1$.
We now seek upper bounds that distinguish other commutator maps
from this model.

\subsection{Scalarization and isotropic index}

A \emph{scalarization} is $g=\lambda\beta$ for a homomorphism
$\lambda:W\to\F_p$. Its rank is $\dim_{\F_p}(V/\rad g)$;
this quotient is elementary abelian since $pV\le\rad g$.
For scalar forms, $\perp_g$ denotes orthogonal direct sum and
$X^{\perp g}$ the orthogonal complement of $X$.
All vector-space dimensions and ranks are over $\F_p$ in the
$p$-primary setting.
Write $N_R(v)=\om(v+R)$, also denoted $N_R(q)$ for $q\in V/R$.
We shall use
\begin{equation}\label{eq:budget}
 \tau(\beta)\le\log_p[V:R]+\tau(\beta|_R),\qquad
 a(\beta)\le\om(\beta)p^{\tau(\beta)}.
\end{equation}
The index inequality is immediate. For the covering inequality, choose an isotropic
subgroup $A$ of minimum index. In $v+A$, two elements $v+a,v+b$
are orthogonal exactly when $a-b\in C_A(v)$.
Representatives of $A/C_A(v)$ therefore give both a clique and a
cover by the isotropic subgroups $\langle v+a,C_A(v)\rangle$.
More generally, let $R\le V$ and let $A\le R$ be isotropic of
index $p^t=p^{\tau(\beta|_R)}$. Partition $v+R$ into its $p^t$
cosets $x+A$. In each one, orthogonality is the equivalence relation
\[
 x+a\perp x+b\quad\Longleftrightarrow\quad
 a-b\in C_A(x),
\]
because $\beta(A,A)=0$. Its classes are the cosets of $C_A(x)$;
representatives of different classes form a nonorthogonal set in
$v+R$, so their number is at most $N_R(v)$. Each class is contained
in the isotropic subgroup $\langle x+a,C_A(x)\rangle$, for a
suitable representative $a$. Consequently $v+R$ is covered by at
most $p^tN_R(v)$ isotropic subgroups. The same subgroups cover all
its nonzero scalar multiples in $V/R$: multiplication by any integer
coprime to $p$ permutes $R$ and preserves each subgroup.

A scalarization extracts an ordinary alternating form from a
commutator map with values in a group. Distinct cosets of its
radical can then be combined into a single nonorthogonal set
whenever their scalar pairings are nonzero. This observation
underlies the index bound below.

\begin{lemma}[Cliques in symplectic spaces]\label{lem:scalar}
A symplectic space of dimension $2s$ over $\F_p$, with $s\ge1$,
contains a clique of $ps+1$ elements, exactly one of which lies
in any prescribed hyperplane. If $p=3$, it also contains a clique
of size at least $10s/3$.
\end{lemma}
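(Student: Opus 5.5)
The plan is to build cliques by induction on $s$ through a gluing operation on orthogonal direct sums. Throughout, a "clique" means a pairwise nonorthogonal set. The key observation is the following. Let $X\perp Y$ be symplectic, let $A\subseteq X$ be a clique, and let $w\in X$ be such that exactly one element $a_0\in A$ lies in $w^{\perp}$. Let $B\subseteq Y$ be any clique and fix $b_0\in B$. Then
\[
 A\ast B=\{a: a\in A\setminus\{a_0\}\}\cup\{a_0+b_0\}\cup\{b+w: b\in B\setminus\{b_0\}\}
\]
is a clique in $X\perp Y$ of size $|A|+|B|-1$. One checks the pairings directly, using $\beta(X,Y)=0$:
\begin{itemize}
\item $(b+w,a)$ pairs to $b(w,a)\neq0$ for $a\ne a_0$;
\item $(b+w,b'+w)$ pairs to $b(b,b')\neq0$;
\item $(a_0+b_0,b+w)$ pairs to $b(b_0,b)+b(a_0,w)=b(b_0,b)\neq0$;
\item $(a_0+b_0,a)$ pairs to $b(a_0,a)\ne0$.
\end{itemize}
Moreover, $A\ast B$ again has the hyperplane property, witnessed by $z=w+b_0$. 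Indeed $b(z,a)=b(w,a)\neq0$ for $a\neq a_0$, $b(z,a_0+b_0)=0$, and $b(z,b+w)=b(b_0,b)\neq0$. So exactly one element of $A\ast B$ lies in $z^{\perp}$. Finally, $\mathrm{Sp}_{2s}(\F_p)$ acts transitively on nonzero vectors by Witt's theorem, hence transitively on hyperplanes $z^\perp$. Therefore "exactly one element in some hyperplane" upgrades to "exactly one element in any prescribed hyperplane" after applying an isometry.

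For the first claim, the base case is $s=1$. A hyperbolic plane over $\F_p$ has exactly $p+1$ lines. Any two nonzero vectors on distinct lines pair nontrivially, since a nonzero alternating form on $\F_p^2$ vanishes only on dependent pairs. Choosing one nonzero vector on each line gives a clique of size $p+1$, and a prescribed hyperplane (a line) contains exactly one of them. For the inductive step, write the space of dimension $2s$ as $X\perp U$ with $\dim X=2(s-1)$ and $U$ a hyperbolic plane. Glue the inductive clique in $X$, of size $p(s-1)+1$ with the hyperplane property, to the $(p+1)$-clique in $U$. This gives size $p(s-1)+1+(p+1)-1=ps+1$ with the hyperplane property, as required.

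For $p=3$, the plan is to replace the hyperbolic-plane building block by a denser block in dimension $6$. I need a clique of size $11$ in the $6$-dimensional symplectic space over $\F_3$. Equivalently, I need an $11\times11$ alternating matrix over $\F_3$ with all off-diagonal entries nonzero and rank at most $6$; the vectors are then realized in the quotient by the radical, as in the proof of Lemma~\ref{lem:extraspecial}. The first candidate I would test is the Paley-type skew matrix $M_{ij}=\bigl(\tfrac{i-j}{11}\bigr)$ reduced mod $3$. It is alternating because $-1$ is a nonresidue mod $11$, and I would compute its rank over $\F_3$. If that rank exceeds $6$, I would fall back on a direct search, or on a $4$-dimensional block with a clique of size $8$; any block of dimension $2t$ with clique size $k$ satisfying $(k-1)/t\ge10/3$ suffices.

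Granting the block, write $s=3r+e$ with $e\in\{0,1,2\}$. The block clique has size $11$ and has the hyperplane property after an isometry. Gluing $r$ copies of it gives size $10r+1$. Gluing in $e$ further hyperbolic planes, each adding $3$, gives $10r+1+3e$. This is at least $10s/3$ in each residue class: $10r+1$, $10r+4\ge10r+10/3$, and $10r+7\ge10r+20/3$. Exhibiting the $11$-clique in dimension $6$ is the main obstacle; everything else is the formal gluing above.
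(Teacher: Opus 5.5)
\paragraph{First assertion.} Your argument here is correct. The glued set $A\ast B$ is a clique of size $|A|+|B|-1$, the witness $z=w+b_0$ propagates the hyperplane property, and transitivity of $\mathrm{Sp}_{2s}(\F_p)$ on nonzero vectors moves it to any prescribed hyperplane. In fact every clique with at least two members has this property, witnessed by any member $a_0$: one has $a_0\in a_0^{\perp}$, and all other members lie outside $a_0^{\perp}$. So your unproved claim about the ternary block is harmless. The paper uses the simpler gluing $(C_1\times\{v\})\cup(\{0\}\times(C_2\setminus\{v\}))$. It obtains the prescribed hyperplane directly by choosing the last plane to contain its normal vector, so it needs no transitivity.

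\paragraph{The gap.} The ternary assertion is not proved. It rests entirely on a dense block that you never produce, and both of your concrete candidates fail.
\begin{itemize}
\item The Paley candidate: for the Jacobsthal matrix $Q$ of order $11$ one has $Q^2=-QQ^T=J-11I\equiv J+I\pmod 3$. Since $11\not\equiv0\pmod3$, $\F_3^{11}$ is the direct sum of $\langle\mathbf 1\rangle$ and the sum-zero hyperplane, on which $J+I$ acts as the identity. Hence $\rank Q\ge10$, which gives only eleven vectors in dimension ten, worse than orthogonal planes.
\item The four-dimensional fallback is impossible: $\F_3^4$ contains no eight pairwise nonorthogonal vectors, and the maximum is $7$. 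To see this, split off the plane spanned by two members. After sign changes, the other six members have the form $x+\beta$ or $y+\gamma$, with $x,y$ in that plane, $b(x,y)=1$, and $\beta,\gamma$ in the complementary plane. The cross condition is $b(\beta,\gamma)\in\{0,1\}$. Two members of one class leave at most three admissible members of the other, and a $(3,3)$ split admits no completion.
\end{itemize}
A ``direct search'' is not an argument, and nothing in your proposal shows that an $11$-clique exists in dimension six.

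\paragraph{The missing ingredient.} You need a certified block. The paper uses the skew-Hadamard doubling
\[
 D_1=0,\qquad D_{2r}=\begin{pmatrix}D_r&D_r+I_r\\ D_r-I_r&-D_r\end{pmatrix},
\]
which satisfies $D_r^T=-D_r$ and $D_r^2=-(r-1)I_r$, and has all off-diagonal entries equal to $\pm1$. Thus $D_{16}^2=-15I\equiv0\pmod3$, so $\rank D_{16}\le8$ over $\F_3$. Quotienting by the radical gives sixteen pairwise nonorthogonal vectors in dimension eight, that is, $(k-1)/t=15/4$ with $t=4$. Feeding this block into your gluing with $s=4j+e$, $0\le e\le3$, gives a clique of size $15j+3e+1\ge10s/3$, which completes the proof.

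One caveat on your general criterion: $(k-1)/t\ge10/3$ guarantees the bound only when $t\le4$. The $e$ leftover planes contribute $3e$ against the required $10e/3$, which needs $e\le3$; here $t=4$, so this is fine.
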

\begin{proof}
Representatives of the $p+1$ projective points in a symplectic plane
form a clique. In an orthogonal direct sum, cliques $C_1,C_2$ and
an element $v\in C_2$ give the clique
$(C_1\times\{v\})\cup(\{0\}\times(C_2\setminus\{v\}))$,
of size $|C_1|+|C_2|-1$. Each additional plane therefore contributes
$p$ elements. For a prescribed hyperplane, choose the last plane
so that its orthogonal complement lies in the hyperplane, and
choose $v$ outside it. Exactly one of the remaining projective
points of the last plane lies in the hyperplane.

For the stronger ternary bound, use the classical skew-Hadamard doubling
construction (cf. \cite[Chapter~14]{Hall1986}), written with
$D_r=H_r-I_r$.
Define integer matrices recursively for powers of two:
\[
 D_1=0,\qquad
 D_{2r}=\begin{pmatrix}D_r&D_r+I_r\\D_r-I_r&-D_r\end{pmatrix}.
\]
Block multiplication gives $D_r^T=-D_r$ and
$D_r^2=-(r-1)I_r$; every off-diagonal entry is $\pm1$.
Over $\F_3$, we have $D_{16}^2=0$, so $\rank D_{16}\le8$.
Quotienting the corresponding alternating form by its radical
gives sixteen pairwise nonorthogonal elements in a symplectic
space of dimension at most eight. Adding hyperbolic planes if
necessary embeds this configuration in dimension eight.
Write $s=4k+r$, where $0\le r\le3$. Combining $k$ such blocks
and $r$ planes gives $15k+3r+1\ge10s/3$, since the difference
is $(5k-r+3)/3\ge0$.
\end{proof}

The orthogonal-sum clique construction uses only the vanishing of
cross-pairings between the summands. It therefore applies without
change to alternating maps taking values in any abelian group.

The orthogonal-sum construction and the bound $ps+1$ also appear in
\cite[Section~4.2 and Corollary~4.4]{SarkarYoder2024}.
The reduction of the skew-Hadamard matrix modulo three gives the
ternary estimate used here.

\begin{lemma}[Scalarization]\label{lem:scalarisation}
Let $\lambda:W\to\F_p$ be a homomorphism such that
$g=\lambda\beta\ne0$. Suppose that $g$ has radical $R$ and
$[V:R]=p^{2s}$. Put
$t=\tau(\beta|_R)$, $n=\om(\beta)$,
$c=ps+1$ and $B=(p^{2s}-1)/((p-1)c)$. Then
\begin{equation}\label{eq:average}
 n\ge\sum_{q\in C}N_R(q),\qquad
 a(\beta)\le a(\beta|_R)+p^tBn,
\end{equation}
where $C$ is any clique for the induced scalar form on $V/R$.
If $R$ is isotropic and $\beta(R,V)\ne0$, then $n\ge p^2s+1$.

Suppose instead that $\beta\ne0$, its values generate $W$,
$\rad\beta=0$, and $\lambda\beta$ is nondegenerate for every
nonzero homomorphism $\lambda:W\to\F_p$. Then $V\cong\F_p^{2s}$
for some integer $s\ge1$, and $W$
is a vector space.
If $\dim W=1$, then $\tau(\beta)=s$ and $a(\beta)=p^s+1$;
otherwise $s\ge2$ and
\begin{equation}\label{eq:nonsingclique}
 n\ge p^2(s-1)+1,\qquad a(\beta)\le\frac{p^{2s}-1}{p-1}.
\end{equation}
\end{lemma}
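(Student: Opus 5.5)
The plan is to take the four assertions of the statement in turn. Each uses only the coset-wise analysis behind \eqref{eq:budget}, Lemma~\ref{lem:scalar}, and an averaging argument.

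\emph{The clique inequality.} Let $C\subseteq V/R$ be a clique for the induced form $\bar g$. For each $q\in C$, choose a maximum nonorthogonal set in the coset $q$; it has $N_R(q)$ elements. If $x\in q$ and $y\in q'$ with $q\ne q'$ in $C$, then $\lambda\beta(x,y)=\bar g(q,q')\ne0$, hence $\beta(x,y)\ne0$. So the union of these sets is nonorthogonal, which gives $n\ge\sum_{q\in C}N_R(q)$.

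\emph{The covering inequality.} Cover $R$ by $a(\beta|_R)$ isotropic subgroups. By the discussion after \eqref{eq:budget}, each nonzero $q\in V/R$ yields $p^tN_R(q)$ isotropic subgroups covering $q$ and all its nonzero multiples. So one representative per projective point of $V/R\cong\F_p^{2s}$ suffices, and we choose the representative minimizing $N_R$. It remains to show
\[
\sum_{0\ne q\in V/R}N_R(q)\le\frac{(p^{2s}-1)n}{c}.
\]
Fix the clique $C$ of size $c=ps+1$ from Lemma~\ref{lem:scalar}. Its elements are nonzero, since they are nonorthogonal. The symplectic group of $\bar g$ acts transitively on nonzero vectors and maps cliques to cliques. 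Averaging the clique inequality over this group therefore gives $c$ times the mean of $N_R$ over nonzero vectors, at most $n$. The minimum over the $p-1$ multiples is at most their average, so the sum over projective points is at most $Bn$.

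\emph{Isotropic radical.} Suppose $R$ is isotropic and $\beta(R,V)\ne0$. For $v\in V$, orthogonality inside $v+R$ is $v+a\perp v+b\iff a-b\in C_R(v)$. Hence $N_R(v)\ge[R:C_R(v)]\ge p$ whenever $v\notin C_V(R)$. The subgroup $C_V(R)$ contains $R$ and is proper, so its image lies in some hyperplane $H$ of $V/R$. Lemma~\ref{lem:scalar} provides a clique of size $ps+1$ with exactly one element in $H$. The clique inequality then gives $n\ge p\cdot ps+1=p^2s+1$.

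\emph{The nondegenerate case.} Since $\rad(\lambda\beta)=0$ and $pV\le\rad(\lambda\beta)$, $V$ is elementary abelian of even dimension $2s$. Moreover $p\beta(x,y)=\beta(px,y)=0$, and the values generate $W$, so $W$ is elementary abelian. If $\dim W=1$, then $\beta$ is a nondegenerate scalar form, and the remark after Lemma~\ref{lem:extraspecial} gives $\tau=s$ and $a=p^s+1$. Suppose $\dim W\ge2$. If $s=1$, all values would be multiples of $\beta(e_1,e_2)$, so $s\ge2$. The covering bound follows from covering $V$ by its $(p^{2s}-1)/(p-1)$ one-dimensional subspaces, which are isotropic.

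For the clique bound, choose independent $\lambda_1,\lambda_2$ and put $g_i=\lambda_i\beta$. Both forms are nondegenerate, so $T=g_1^{-1}g_2$ is an invertible linear map. I claim some $x\ne0$ has $x^{\perp g_1}\ne x^{\perp g_2}$. Otherwise every vector is an eigenvector of $T$, so $T=c\cdot\mathrm{id}$. Then $(\lambda_2-c\lambda_1)\beta=0$ with $\lambda_2-c\lambda_1\ne0$, contradicting nondegeneracy. With such an $x$, put $H=x^{\perp g_1}$; then $g_1|_H$ has radical $\langle x\rangle$ and rank $2s-2$. The subgroup $R=\langle x\rangle$ is isotropic, and $\beta(x,H)\ne0$ because $H\not\subseteq x^{\perp g_2}$. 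Applying the isotropic-radical case to $\beta|_H$ and $\lambda_1$ yields $\om(\beta)\ge\om(\beta|_H)\ge p^2(s-1)+1$.

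I expect the main obstacle to be this last construction: locating an isotropic radical on which $\beta$ is still nonzero. The eigenvector argument above is the route I would try first. The averaging step is routine but needs care with scalar multiples and the choice of representatives.
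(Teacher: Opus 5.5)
Your proposal is correct and follows the paper's proof essentially step for step: you take the union of coset cliques over a scalar clique, average over symplectic translates and projective directions, use the hyperplane-avoiding clique of Lemma~\ref{lem:scalar} for the isotropic-radical gain, and in the nondegenerate case restrict to $x^{\perp g}$ with isotropic radical $\langle x\rangle$. The only variation is how you certify $\beta(x,x^{\perp g_1})\ne0$. The paper notes that $\beta(z,V)=W$ for every $z\ne0$, since a functional killing a proper image would give a degenerate scalarization; hence every $z$ works, with $\beta(z,z^{\perp g})=\ker\lambda$. Your eigenvector argument for $g_1^{-1}g_2$ produces only one suitable $x$, but that is all the argument needs.
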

\begin{proof}
\emph{Averaging over scalar directions.}
Since $pV\le R$, the quotient $V/R$ is a vector space over
$\F_p$, and the form induced by $g$ is symplectic.
Cliques chosen in distinct cosets indexed by $C$ remain pairwise
nonorthogonal under $g$, proving the first inequality. The local
covering construction uses at most $p^tN_R(v)$ subgroups for each
projective direction: the same subgroups cover all nonzero scalar
multiples of $v+R$, since multiplication by an integer coprime to
$p$ preserves every subgroup and maps $R$ bijectively to itself.
Likewise $N_R(v)$ depends only on the projective direction of $v+R$;
write it as $N(\ell)$. A clique of size $c$ meets $c$ distinct
directions. The symplectic group acts transitively on nonzero vectors (and hence
on their projective directions). Thus each projective direction
occurs equally often in the symplectic translates of $C$.
Averaging over these translates gives
\[
 \frac{c}{m}\sum_\ell N(\ell)\le n,
 \qquad m=\frac{p^{2s}-1}{p-1}.
\]
Together with a cover of $R$, this proves \eqref{eq:average}.
Only the scalar form, not $\beta$, is required to be invariant
under the symplectic group.

\emph{The isotropic-radical gain.}
If $R$ is isotropic, $C_V(R)/R$ is a proper subspace of $V/R$.
Take a hyperplane containing it and apply Lemma~\ref{lem:scalar}.
The $ps$ cosets outside that hyperplane have nonzero interaction
with $R$, so each contains a clique of at least $p$ elements.
The remaining coset contributes one, giving $n\ge p^2s+1$.

\emph{Nondegenerate scalarizations.}
In the nondegenerate case, a nondegenerate scalarization forces
$pV=0$, and hence $pW=0$. If $\dim W=1$, the scalar calculation following
Lemma~\ref{lem:extraspecial} gives the result.
If $\dim W\ge2$, then $s\ge2$, since the space of alternating
forms in dimension two is one-dimensional. Also $\beta(z,V)=W$ for
every $z\ne0$: a linear functional annihilating a proper image
would yield a degenerate scalarization. On $z^{\perp g}$, the
scalar radical is $\langle z\rangle$, and surjectivity gives
$\beta(z,z^{\perp g})=\ker\lambda$, of dimension $\dim W-1>0$.
Applying the isotropic-radical
case just proved gives $n\ge p^2(s-1)+1$. All one-dimensional
subspaces are isotropic, giving the covering bound.
\end{proof}

\begin{proposition}[Isotropic index]\label{prop:isotropic-index}
Every alternating map between finite abelian $p$-groups satisfies
\[
 \om(\beta)\ge
 \begin{cases}
 2\tau(\beta)+1,&p=2,\\
 (10/3)\tau(\beta),&p=3,\\
 p\tau(\beta),&p\ge5.
 \end{cases}
\]
\end{proposition}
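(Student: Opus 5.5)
Write $f(\tau)$ for the right-hand side, so $f(\tau)=2\tau+1$, $(10/3)\tau$ or $p\tau$ according to $p$. I would prove $\om(\beta)\ge f(\tau(\beta))$ by strong induction on $|V|$. First I would pass to $V/\rad\beta$, which preserves $\om$ and $\tau$. If $\beta=0$ then $\tau=0$, and the claim is trivial since $\om\ge1$ and $f(0)\le1$. Otherwise I would choose $\lambda:W\to\F_p$ with $g=\lambda\beta\ne0$. Such a $\lambda$ exists because $W$ is a finite abelian $p$-group generated by the values of $\beta$, so some value has nonzero image under a character into $\F_p$. Let $R=\rad g$ and $[V:R]=p^{2s}$ with $s\ge1$. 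By \eqref{eq:budget}, $\tau(\beta)\le 2s+\tau(\beta|_R)$. Since $R<V$, induction gives $\om(\beta|_R)\ge f(\tau(\beta|_R))$.

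The key step combines cliques through \eqref{eq:average}, which gives $\om(\beta)\ge\sum_{q\in C}N_R(q)$ for any clique $C$ of the scalar form on $V/R$. I would take $C$ from Lemma~\ref{lem:scalar}, with exactly one element in the coset $R$ itself (the zero class, corresponding to the "prescribed hyperplane" trick via a translate). That coset contributes $N_R(0)=\om(\beta|_R)\ge f(\tau(\beta|_R))$. Each of the remaining elements contributes at least one. The clique has size $ps+1$ in general and $\ge 10s/3$ for $p=3$, so this yields
\[
\om(\beta)\ge f(\tau(\beta|_R))+ps\quad(\text{resp. }+\tfrac{10}{3}s-1\text{ for }p=3).
\]

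The obstacle is the mismatch between $2s$ lost in the index and $ps$ gained in the clique. For $p\ge5$, $ps\ge p\cdot(2s)/2$ falls short: we need a gain of $p\cdot 2s$. So the naive estimate is too weak. The fix is to not charge the full $2s$ to $\tau$. Choose a maximal isotropic subspace $U/R$ of the scalar form, of index $p^s$ in $V/R$, and bound $\tau(\beta)\le s+\tau(\beta|_U)$. Then the question is whether $\om(\beta)\ge ps+\om(\beta|_U)$ or similar. For this I would induct on $U$ instead of $R$. I would take the clique of Lemma~\ref{lem:scalar} with exactly one element inside the hyperplane-type subspace, here chosen to contain $U/R$. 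This clique meets $U$ in one coset, so it contributes $\om(\beta|_U)$ there, plus $ps$ further elements from cosets outside $U$ that are pairwise $g$-nonorthogonal and nonorthogonal to everything in $U$'s chosen coset.

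Making the latter compatibility work is what I expect to be hardest. One needs the outside elements to be $g$-nonorthogonal to the whole chosen clique in $U$, not just to one coset representative. I would try to arrange this by choosing the outside cosets so that their $g$-pairing with $U$ is a fixed nonzero functional. The functional is then nonzero on all of the clique inside $U$, after translating that clique into a single coset of $\ker$ of the functional, using that $\tau$ and $\om$ are unchanged by such translations in the induction. With $\tau$ drop $s$ and gain $ps$ (or $2s$ for $p=2$, $10s/3$ for $p=3$), the induction closes. For $p=2$ the extra $+1$ comes from the base case $\om\ge1$, inherited through the single-coset contribution.
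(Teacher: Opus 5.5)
Your proof has a genuine gap at the step you yourself flag as hardest. The inequality $\om(\beta)\ge ps+\om(\beta|_U)$ (or $\ge ps+f(\tau(\beta|_U))$) is never established, and the mechanism you sketch for it fails for four reasons.
\begin{enumerate}
\item \emph{A fixed functional gives at most $p$ outer vectors.} Since $U/R$ is Lagrangian, the map $x\mapsto g(x,\cdot)|_{U/R}$ on $V/R$ has kernel $U/R$. So vectors inducing the same functional lie in one coset $c+U/R$. There $g(c+a,c+b)=g(c,b-a)$, so at most $p$ of them are pairwise $g$-nonorthogonal. This is incompatible with $ps$ outer vectors once $s\ge2$.
\item \emph{Translating a clique changes its pairings.} For $K\subseteq U$ one has $\beta(y+w,y'+w)=\beta(y,y')+\beta(y-y',w)$. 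Hence $K+w$ is again a clique only if $w$ is orthogonal to $K-K$. Invariance of $\om(\beta|_U)$ and $\tau(\beta|_U)$ says nothing about where a particular clique sits.
\item \emph{$g$ cannot reach points of $K\cap R$.} Since $g(R,V)=0$, no outer vector is $g$-nonorthogonal to them; this is also why the zero class cannot lie in the clique of $V/R$ in your first paragraph. Maximum cliques of $\beta|_U$ may be forced to meet $R$. Take $V=\langle u,w,r_0\rangle$ over $\F_p$ with $\beta(u,w)=e_1$, $\beta(u,r_0)=e_2$, $\beta(w,r_0)=0$, $\lambda=e_1^*$ and $U=\langle u,r_0\rangle$. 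Then $\beta|_U$ is a symplectic plane, and every clique of size $p+1$ contains a multiple of $r_0$. Nonorthogonality to such points must come from the $\ker\lambda$-components of $\beta$, which your argument never analyses.
\item \emph{The case $p=3$ does not close even granting compatibility.} The one-point-in-a-hyperplane clique of Lemma~\ref{lem:scalar} supplies only $3s$ outer points, and $(10/3)(\tau-s)+3s<(10/3)\tau$. The $10s/3$ clique carries no hyperplane property.
\end{enumerate}

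The paper avoids the compatibility problem by never leaving the cosets of $R=\rad g$. It takes a $g$-clique of $ps+1$ nonzero classes in $V/R$, so vectors chosen in distinct classes are automatically nonorthogonal. All the work then goes into a lower bound for the clique inside a single coset $v+R$, obtained from a greedy dichotomy:
\begin{itemize}
\item If every $C_R(x)$ with $x\in v+R$ has index at least $p^j$, a covering argument gives a nonorthogonal family of at least $p^j$ members.
\item Otherwise some $x$ has $K=C_R(x)$ of index at most $p^{j-1}$. Then $\tau(\beta|_K)\ge t-j+1$, and the inductive clique in $K$ translates to $x+K\subseteq v+R$. This translation is legitimate precisely because $\beta(x,K)=0$.
\end{itemize}
This centralizing translation is the idea your proposal is missing. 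It yields a multiplicative bound, $(ps+1)$ times the per-coset bound, against $\tau(\beta)\le2s+t$. The remaining cases are handled separately: the isotropic radical $t=0$ by the gain $p^2s+1$, the binary case $t=1$ by a triple argument, and nondegenerate scalarizations by Lemma~\ref{lem:scalarisation}.
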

\begin{proof}
Write $n=\om(\beta)$ and induct on $|V|$. The proof distinguishes
degenerate scalarizations, where we combine cliques from several
cosets of a radical, from nondegenerate scalarizations, where
Lemma~\ref{lem:scalarisation} gives a direct bound.

Quotienting by the radical preserves $n$ and $\tau$, so we may
assume $\rad\beta=0$ and replace $W$ by the subgroup generated
by the values. The zero map has $\tau=0$ and $n=1$.
Otherwise $W\ne0$, and some homomorphism
$\lambda:W\to\F_p$ has $g=\lambda\beta\ne0$.

\emph{Case 1: a degenerate scalarization.}
Suppose first that $g$ is degenerate. Let $R=\rad g$, with
$0<R<V$, $[V:R]=p^{2s}$, and $t=\tau(\beta|_R)$. An isotropic
subgroup in $R$ is also isotropic in $V$; hence
\begin{equation}\label{eq:index-budget-expanded}
 \tau(\beta)\le 2s+t.
\end{equation}
\emph{The isotropic radical ($t=0$).}
If $t=0$, then $R$ is isotropic. Since $\rad\beta=0$, one has
$\beta(R,V)\ne0$, so Lemma~\ref{lem:scalarisation} gives
$n\ge p^2s+1$. This is at least $4s+1$ for $p=2$,
$(20/3)s$ for $p=3$, and $2ps$ for $p\ge5$; the desired
assertion follows from \eqref{eq:index-budget-expanded}.

\emph{A clique bound in every coset ($t\ge1$).}
Assume $t\ge1$. For $j\ge1$ write
\[
 L_2(j)=2j+1,\qquad L_3(j)=\left\lceil\frac{10j}{3}\right\rceil,
 \qquad L_p(j)=pj\ (p\ge5).
\]
We first prove, for $1\le j\le t$, that each coset $v+R$ contains
a nonorthogonal set of at least
\begin{equation}\label{eq:greedy}
 N_R(v)\ge\min\{p^j,L_p(t-j+1)\}.
\end{equation}
Choose a maximal nonorthogonal family in $v+R$. For each selected
$x$, the elements of $v+R$ orthogonal to $x$ are exactly
$x+C_R(x)$. If all these subgroups $C_R(x)$ have index at least
$p^j$ in $R$, fewer than $p^j$ orthogonality sets cannot cover
$v+R$, so the family has at least $p^j$ members. Otherwise there
is a point $x\in v+R$ such that, with $K=C_R(x)$,
$[R:K]\le p^{j-1}$. If an isotropic subgroup of $K$ has index
$p^d$ in $K$, it has index $[R:K]p^d$ in $R$, so
\[
 \tau(\beta|_K)\ge t-\log_p[R:K]\ge t-j+1.
\]
Since $K\subseteq R\subsetneq V$, induction applies to the
restricted map $\beta|_K$: it has a nonorthogonal set of size
at least $L_p(t-j+1)$. Translating this set by $x$ does not alter
its pairwise pairings because $\beta(x,K)=0$. This proves
\eqref{eq:greedy}.

\emph{The exceptional binary subcase.}
For $p=2,t=1$, a sharper assertion is needed: \emph{every coset
$v+R$ has a nonorthogonal triple}. Indeed, $\beta|_R\ne0$;
choose $a,b\in R$ with $\beta(a,b)\ne0$. The three points
$a,b,a+b$ form a nonorthogonal triple. If any $x\in v+R$
centralizes $R$, its translate $x+\{a,b,a+b\}$ is such a triple
in $v+R$.

Suppose, to the contrary, that $v+R$ has no nonorthogonal triple.
Then no point of this coset centralizes $R$, so every
$C_R(x)$ is a proper subgroup and has index at least two.
A maximal nonorthogonal family in $v+R$ has at least two members
by the covering argument above, and exactly two under our
assumption; denote them by $x_1,x_2$. Their orthogonality sets
\[
 S_i=x_i+C_R(x_i)\qquad(i=1,2)
\]
cover $v+R$, and each has size at most $|R|/2$. It follows that
they are disjoint, each has size $|R|/2$, and
$K_i=C_R(x_i)$ has index two in $R$. Each $S_i$ is pairwise
orthogonal: if $y,z\in S_i$ were nonorthogonal, then both would
be nonorthogonal to $x_{3-i}$, because $S_i$ is disjoint from
$S_{3-i}$; hence $\{y,z,x_{3-i}\}$ would be a triple.

If $K_1\ne K_2$, the quotient map
$R\to R/K_1\times R/K_2\cong\F_2^2$ is onto, so any coset
of $K_1$ intersects any coset of $K_2$. This contradicts
$S_1\cap S_2=\varnothing$. Thus $K_1=K_2=K$.
The isotropy of each $S_i=x_i+K$ implies $\beta(K,K)=0$.
Each $x_i$ centralizes $K$ by definition, and the two sets
$S_1,S_2$ partition $v+R$. Therefore every point of $v+R$
centralizes $K$. Taking differences shows that
$\beta(R,K)=0$, i.e. $K\le\rad(\beta|_R)$.
But $R/K$ has order two and is cyclic; an alternating map
on a cyclic group vanishes, giving $\beta(R,R)=0$, contrary
to $t=1$. Thus $N_R(v)\ge3$ for $p=2,t=1$.

\emph{Choosing the greedy parameter.}
We now select $j$ in \eqref{eq:greedy} for each prime and each
remaining value of $t$. For $p=2,t\ge2$, set
$j=\lfloor\log_2t\rfloor+1\le t$. Then $2^j\ge t+1$ and
$2\lfloor\log_2t\rfloor\le t$, so
$L_2(t-j+1)=2(t-j+1)+1\ge t+1$. Thus $N_R(v)\ge t+1$.
For $p=3,t=1$, take $j=1$, obtaining $N_R(v)\ge3$.
For $p=3,t=2$, take $j=2$, obtaining
$N_R(v)\ge\min\{9,L_3(1)\}=4$.
For $p=3,t\ge3$, put $j=\lfloor2t/5\rfloor+1\ge2$.
Then $j\le t$, $2t\le5j-1\le3^j$ (the second inequality
holds at $j=2$ and by induction), and
\[
 \frac{10}{3}(t-j+1)\ge2t.
\]
Thus $N_R(v)\ge2t$.
For $p\ge5,t=1$ use $j=1$, obtaining $N_R(v)\ge p$.
For $p\ge5,t\ge2$, choose $j=\lceil t/2\rceil\le t$.
The elementary inequalities $p^{j-1}\ge j\ge t/2$ and
$t-j+1\ge t/2$ give $N_R(v)\ge pt/2$.

\emph{Combining the cosets.}
Take a clique of $ps+1$ cosets in $V/R$, as supplied by
Lemma~\ref{lem:scalar}. Every two cosets in this family are
nonorthogonal under $g$, so nonorthogonal families in these
cosets concatenate to a single nonorthogonal family in $V$.
In summary,
\[
\begin{array}{c|cc}
 &t=1&t\ge2\\ \hline
 p=2&3&t+1\\
 p=3&3&4\ (t=2),\quad 2t\ (t\ge3)\\
 p\ge5&p&pt/2
\end{array}
\]
are valid lower bounds for each $N_R(v)$. In conjunction with
\eqref{eq:index-budget-expanded}, the required comparisons for
$t\ge2$ are
\[
\begin{split}
 (t+1)(2s+1)&\ge 4s+2t+1,\\
 2t(3s+1)&\ge\tfrac{10}{3}(2s+t),\\
 (pt/2)(ps+1)&\ge p(2s+t).
\end{split}
\]
For $p=3,t=2$ the second comparison also holds with the actual
bound $N_R(v)\ge4=2t$. The first difference is
$2s(t-1)-t\ge0$; the second difference is
$[t(18s-4)-20s]/3\ge0$ for $t\ge2,s\ge1$;
the third is equivalent to $s(pt-4)\ge t$, valid for
$p\ge5,t\ge2,s\ge1$. For $t=1$ the comparisons are
$3(2s+1)\ge4s+3$, $3(3s+1)\ge(10/3)(2s+1)$, and
$p(ps+1)\ge p(2s+1)$.

\emph{Case 2: all nonzero scalarizations are nondegenerate.}
It remains to treat this case. Lemma~\ref{lem:scalarisation} makes $V$ and $W$
vector spaces with $\dim V=2s$. If $\dim W=1$, then
$\tau(\beta)=s$, and Lemma~\ref{lem:scalar} supplies the clique
bound. Otherwise $s\ge2$ and
$n\ge p^2(s-1)+1$. For odd $p$, an isotropic line gives
$\tau(\beta)\le2s-1$, while
\[
 p^2(s-1)+1\ge
 \begin{cases}
 (10/3)(2s-1),&p=3,\\
 p(2s-1),&p\ge5.
 \end{cases}
\]
For $p=2$, if an isotropic plane exists, then
$\tau(\beta)\le2s-2$ and $n\ge4s-3\ge2\tau(\beta)+1$.
If no isotropic plane exists, every pair of distinct nonzero
vectors has nonzero pairing: all $2^{2s}-1$ nonzero vectors
form a clique, and $\tau(\beta)=2s-1$. Since $s\ge2$,
$2^{2s}-1\ge4s-1=2\tau(\beta)+1$.
\end{proof}

The index estimate now passes to class-two groups.

\begin{corollary}[Class-two index bounds]\label{cor:class2}
Every finite group $G$ of class at most two, with $n=\om(G)$, satisfies
\begin{equation}\label{eq:class2out}
 \indab(G)\le2^{n/2}.
\end{equation}
If $G$ has at least two nonabelian Sylow subgroups, then
$\indab(G)\le2^{7n/24}$.
\end{corollary}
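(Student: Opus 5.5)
The plan is to reduce to Sylow subgroups and then apply Proposition~\ref{prop:isotropic-index} prime by prime. A finite group $G$ of class at most two is nilpotent, so $G=P_1\times\cdots\times P_k$ is the direct product of its Sylow subgroups. Two elements commute exactly when their components commute in every factor. Hence the following two facts hold.
\begin{enumerate}
\item[(i)] $\om(G)\ge\sum_{i:\,P_i\text{ nonabelian}}\om(P_i)-(r-1)$, where $r$ is the number of nonabelian factors. This is the orthogonal-sum clique construction from Lemma~\ref{lem:scalar}: keep one element $v_i$ from each clique and pad with it. Alternatively one can bound it below by $\max_i\om(P_i)$ and a product-type bound. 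The cleanest route is to glue cliques $C_1,\dots,C_r$ as in Lemma~\ref{lem:scalar}, which gives size $\sum|C_i|-(r-1)$.
\item[(ii)] $\indab(G)\le\prod_i\indab(P_i)$, since a product of abelian subgroups of the factors is abelian.
\end{enumerate}
Abelian factors contribute index $1$ and may be dropped, using the remark on abelian direct factors.

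For each nonabelian Sylow $p$-subgroup $P$ we have $\indab(P)=p^{\tau(\beta_P)}$ and $\om(P)=\om(\beta_P)$. Writing $n_p=\om(P)$ and $\tau=\tau(\beta_P)$, Proposition~\ref{prop:isotropic-index} gives $\tau\le (n_p-1)/2$ for $p=2$, $\tau\le 3n_p/10$ for $p=3$, and $\tau\le n_p/p$ for $p\ge5$. Converting to base two, we set $e_p=\log_2\indab(P)/n_p$. This gives $e_2\le (n_2-1)/(2n_2)<1/2$, $e_3\le (3/10)\log_2 3\approx0.4755$, and $e_p\le(\log_2 p)/p$ for $p\ge5$, which is at most $(\log_2 5)/5\approx0.4644$ and decreasing in $p$. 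In every case $\log_2\indab(P)\le n_p/2$, with the sharper bound $(n_2-1)/2$ when $p=2$.

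To combine the factors, note that $n=\om(G)\ge\sum_i n_i-(r-1)$, so it suffices to show $\sum_i\log_2\indab(P_i)\le\frac12\bigl(\sum_i n_i-(r-1)\bigr)$. The deficit of $(r-1)/2$ is absorbed by per-factor slack. For $p=2$ the bound $(n_2-1)/2$ already gives slack $1/2$. For odd $p$, a nonabelian $P$ has $n_p\ge p+1$, since a symplectic plane already yields a clique of size $p+1$. The slack is therefore $n_p(1/2-e_p)\ge 4(0.5-0.4755)$ for $p=3$, which is too small. So for odd primes I would instead use the minimum: $\tau\ge1$ forces $n_p\ge p+1$, and integrality gives $\tau\le\lfloor 3n_p/10\rfloor$. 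For $n_3=4$ this means $\tau\le1$, $\log_2 3\approx1.585$, and $n_3/2-\log_2 3\approx0.415$. This is below $1/2$, so the per-factor slack falls short. I expect this bookkeeping to be the main obstacle. The fix is to glue more efficiently, with total clique $\sum n_i-(r-1)$ and one shared padding element, and to note that at most one factor needs to absorb nothing. Each extra factor beyond the first then needs slack $1/2$. For $p$ odd I would verify $n_p/2-\tau\log_2 p\ge1/2$ by a small case check, using $\tau\le1$ when $n_p$ is small and the linear bound when $n_p$ is large (for $p=3$, $n_3\ge 4$; if $n_3=4$, use the sharper clique of size $p+1=4$ only when $\tau=1$, and otherwise compare $\om\ge 10\tau/3$ exactly). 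If the case check fails, the fallback is the cruder bound $n\ge\max_i$ combined with the stronger second claim.

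For the second assertion, with $r\ge2$ nonabelian factors, I would aim for $\sum_i\log_2\indab(P_i)\le\frac{7}{24}\bigl(\sum_i n_i-r+1\bigr)$. Each term satisfies $\log_2\indab(P_i)\le c_p n_i$, with $c_2\le1/2$ and $c_p\le0.4755$ for odd $p$. This alone does not give $7/24$, so the improvement must come from the gluing. The candidate is a product clique: choosing $x_i\in C_i$ and using tuples in which every coordinate is nonidentity, a family of size $\prod_i|C_i|$ can be made pairwise noncommuting when distinct tuples differ noncommutingly in some coordinate, roughly $\min$-type products. Balancing $\max(a,b)$ against $a+b$ with $a,b\ge$ the smallest nonabelian cliques (at least $3$ and $4$) should yield the constant $7/24$. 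I would check this against the extremal pair $p=2$, $p=3$ and fix the gluing so that the constant comes out exactly.
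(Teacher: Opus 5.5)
Your prime-by-prime step is fine and matches the paper. Proposition~\ref{prop:isotropic-index}, with $2\log_2 3<10/3$ and $2\log_2 p\le p$ for $p\ge5$, gives $\indab(P)\le 2^{\om(P)/2}$ for every Sylow subgroup, and abelian indices multiply over the direct decomposition. The gap is in how you combine the factors.

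\emph{The additive gluing is too weak for the second assertion.} The bound $n\ge\sum_i n_i-(r-1)$ cannot give $7/24$. For $G=Q_8\times E$, with $E$ extraspecial of order $27$ and exponent $3$, one has $\indab(G)=6$. The additive bound certifies only $n\ge 3+4-1=6$, and $2^{7\cdot 6/24}<6$.

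\emph{It also does not close the first assertion as you organize it.} Using only the bounds of Proposition~\ref{prop:isotropic-index}, the slack $n_p/2-\tau\log_2 p$ can be as small as $5-3\log_2 3\approx 0.245$ for $p=3$ (at $n_3=10$) and $5-2\log_2 5\approx0.356$ for $p=5$ (at $n_5=10$). So if there are nonabelian $3$- and $5$-factors, your requirement of slack $1/2$ from every factor but one fails, whichever factor you exempt. Your stated fallback rests on the unproved second assertion. Summing slacks over all factors would rescue the first assertion, since $0.245+0.356>1/2$ and every other prime contributes at least $1/2$. You did not carry this out, and it still says nothing about $7/24$.

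\emph{The missing idea is that cliques multiply, not add, in a direct product.} If $C_i$ is a clique of $P_i$, then $C_1\times\cdots\times C_r$ is a clique of $G$. Two distinct tuples differ in some coordinate, and there their entries are distinct, hence noncommuting, elements of $C_i$. No condition on nonidentity coordinates is needed, and $\om(G)\ge\prod_i\om(P_i)$ holds exactly, not as a ``min-type'' estimate.

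Since every nonabelian factor has $n_i\ge3$, you get $\sum_i n_i\le\prod_i n_i\le n$. This yields $\indab(G)\le2^{n/2}$ with no bookkeeping.

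For the second assertion, group the nonabelian Sylow factors into two blocks, one of which contains a nonabelian odd Sylow subgroup. The blocks then have clique numbers $a\ge p+1\ge4$ and $b\ge3$. Three facts finish the argument:
\begin{itemize}
\item the first assertion applied to each block gives $\log_2\indab(G)\le(a+b)/2$;
\item the product clique gives $n\ge ab$;
\item $(a+b)/2\le7ab/24$ is equivalent to $7ab\ge12(a+b)$, which holds for $a\ge4$, $b\ge3$, with equality at $(4,3)$.
\end{itemize}
This is where the constant $7/24$ comes from. The relevant comparison is between $a+b$ and $ab$, not between $\max(a,b)$ and $a+b$.
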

\begin{proof}
For a Sylow $p$-subgroup $P$, Proposition~\ref{prop:isotropic-index}
gives $\indab(P)\le2^{\om(P)/2}$, using
$2\log_2 3<10/3$ and $p^2\le2^p$ for $p\ge5$.
Since $G$ is finite nilpotent, it is the direct product of its Sylow
subgroups. Discard abelian factors; if none remain the result is immediate.
Abelian indices multiply as upper bounds
and clique sizes multiply as lower bounds. For factors with clique
sizes at least three, their sum is at most their product, proving
\eqref{eq:class2out}.
If at least two factors remain, split $G$ into two nonabelian factors
with clique sizes $a\ge4$ and $b\ge3$: one factor contains a
nonabelian odd Sylow subgroup, whose clique number is at least four
by Proposition~\ref{prop:isotropic-index}. Then
\[
 \log_2\indab(G)\le(a+b)/2\le7ab/24\le7n/24.
\]
\end{proof}

\section{Reduction to binary alternating maps}\label{sec:tools}

For the upper bound, we first exclude groups outside the
binary class-two model.

\begin{proposition}[Reduction to the binary case]\label{prop:binary-reduction}
Let $n\ge64$ be even and $G$ a finite group with $\om(G)\le n$.
If $G$ is not of the form $P\times A$, where $P$ is a class-two
$2$-group and $A$ is abelian of odd order, then $a(G)<L_n$.
\end{proposition}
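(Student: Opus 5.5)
We argue by contradiction, descending through centralizers: a group far from the binary class-two model either has a short chain of centralizers or a large-index structure, and in each branch the covering number is charged against $\om$ strictly more efficiently than in the extraspecial model. Throughout, $\om(G)\le n$ with $n\ge64$ even, and we aim for $a(G)<L_n=2^{n/2-1}$.

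\emph{Step 1: class two.} Suppose first that $G$ has class two. Then $G$ is nilpotent and equals the direct product of its Sylow subgroups. If $G$ is not of the excluded form, some odd Sylow subgroup is nonabelian. If that is the only nonabelian factor, Proposition~\ref{prop:isotropic-index} with $p\ge3$ gives $\tau\le(3/10)\om$ for $p=3$ and $\tau\le\om/p$ otherwise. Then \eqref{eq:budget} yields
\[
 a(G)\le n\,\indab(G)\le n\,3^{3n/10}=n\,2^{0.476n}.
\]
This is below $2^{n/2-1}$ once $0.024n>1+\log_2n$, which holds for $n\ge64$ only marginally and would need checking against the appendix numerics. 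If two or more factors are nonabelian, Corollary~\ref{cor:class2} gives $a(G)\le n2^{7n/24}$, which is far smaller. The borderline $p=3$ computation is the sensitive point, and the sharper ternary clique of Lemma~\ref{lem:scalar} is what makes it work.

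\emph{Step 2: higher class, by centralizer descent.} This is the main obstacle. For a noncentral $x$, any abelian cover of $G$ restricts to covers of the centralizers, and the noncommuting elements give a recursion. My plan has two branches.

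\emph{Branch (a).} Some element $x$ has $\om(C_G(x))\le n-2$ and a small number of $C_G(x)$-cosets relevant to the cover. Then we cover $G\setminus C_G(x)$ separately and induct, giving a recursion $a(G)\le a(C_G(x))+(\text{small})\cdot$, which costs a factor less than $2$ per unit drop of $\om$.

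\emph{Branch (b).} Every centralizer chain is long. Then we pass to $Z_2(G)$ or to a class-two section and lose only a polynomial factor in $n$.

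Concretely, I would take a maximal chain of centralizers of noncommuting elements $x_1,x_2,\dots$. Each step multiplies the index by at least $2$ while using one clique element. Neumann/Pyber-type finiteness of $G/Z(G)$ ensures termination. The earlier asymptotic bound $\log_2h(n)=n/2+O(\sqrt n\log^3 n)$ cannot be used directly, since it is too weak for the exact constant. I would therefore need the descent to gain a definite factor, say $2^{-cn}$, whenever $G$ has class $\ge3$, or $G/Z(G)$ is not a $2$-group combined with an odd abelian part.

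\emph{Step 3: non-nilpotent and odd-order parts.} If $G$ is not nilpotent, there are elements of coprime order not commuting with each other. Choose a $p$-element $x$ with $C_G(x)$ of index at least $3$. Elements in distinct cosets then force additional clique elements, so the bound reduces to Step 1 applied to a subgroup with $\om$ reduced by a constant while the index budget drops by more than a factor of $2$.

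Finally, I would assemble the branches by induction on $|G|$, using Lemma~\ref{lem:finite-reduction} to stay finite. The explicit inequalities at $n=64$ would be deferred to Appendix~\ref{app:numerics}.
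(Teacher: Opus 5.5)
\textbf{Step 1 fails numerically.} Your top-level split (class two with a nonabelian odd Sylow factor, versus $G'\not\le Z(G)$) matches the paper's. The case of two nonabelian Sylow factors via Corollary~\ref{cor:class2} is also fine. However, your single-factor estimate in Step~1 is false in the required range.

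At $n=64$ you have $n\,3^{3n/10}=2^{6+19.2\log_2 3}\approx2^{36.4}$, whereas $L_{64}=2^{31}$. Your inequality $0.024\,n>1+\log_2 n$ only holds for $n$ of order $400$. The same crude bound also fails at $n=64$ for $p=5$ ($n\,5^{n/5}\approx2^{35.7}$) and for $p=7$ ($n\,7^{n/7}\approx2^{31.7}$). Checking against the appendix cannot fix this: combining the uniform ratio $\om\ge(10/3)\tau$ with $a\le\om\,\indab$ loses a factor $n$ that cannot be afforded at this threshold.

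The paper instead splits by a scalarization $g=\lambda\beta$ of the reduced commutator map (Lemma~\ref{lem:scalarisation}):
\begin{itemize}
\item If $\beta$ is scalar, then $a(\beta)=p^s+1$ exactly, with no factor $n$, and for $p=3$ the skew-Hadamard clique gives $n\ge(15s-5)/4$.
\item If every scalarization is nondegenerate but $\beta$ is nonscalar, then $a(\beta)\le(p^{2s}-1)/(p-1)$ while $n\ge p^2(s-1)+1$.
\item If some $g$ has nonzero radical $R$, the averaged cover $a(\beta)\le a(\beta|_R)+p^tBn$ is paired with the much stronger clique bounds $n\ge p^2s+1$ (for $t\le1$) or $n\ge2t(ps+1)$ (for $t\ge2$). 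This yields $n\,3^{2n/9}$ and $\tfrac94n\,3^{n/6}$, which leave room for the polynomial factor at $n=64$.
\end{itemize}

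\textbf{Step 2 has no working mechanism.} This is where the real difficulty lies. Class-two $2$-groups already reach $2^{n/2}$, so a descent that lowers $\om$ only additively cannot separate strictly from $L_n$. Also, ``each step multiplies the index by at least $2$'' is a lower bound on index growth, which is the wrong direction for bounding $a(G)$. The missing idea is a \emph{multiplicative} contraction of the clique number at polynomial index cost.

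In the paper, unless $\indab(G)\le n-1$, Lemma~\ref{lem:rigidity} is applied to $D_w$ and then to $D_u$, where $D_t=\{x:[t,x]\in Z(G)\}$. This produces noncommuting $u,v$ whose centralizers have index at most $n-2$, with $c=[u,v]\notin Z(G)$. Put $K=C_G(u,v)$. When $c\notin K'$, Lemma~\ref{lem:amplification} gives $3\om(K)\le n$. Otherwise, a third element $z$ and Lemma~\ref{lem:small-quotients} force $\om(L/Z(L))\ge5$ for $L=\langle u,v,z\rangle$, so $5\om(C_G(L))\le n$.

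The contracted subgroup need not have class two, so you also need an index bound valid for all finite groups: $\indab(H)\le8\cdot2^{\om(H)/2}$ (Lemma~\ref{prop:uniform-index}, proved by the same contraction). Together these give $a(G)\le\mathrm{poly}(n)\,2^{n/6}$.

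\textbf{Step 3 is superfluous and unjustified.} A non-nilpotent group already satisfies $G'\not\le Z(G)$, and the descent never assumes nilpotency. As written, the step also does not work: the subgroup it passes to need not have class two, so Step~1 does not apply to it.
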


We first eliminate groups outside class two by centralizer descent,
then eliminate nonabelian odd Sylow factors. The argument does not
assume nilpotency at the outset.

\subsection{Centralizer descent}

The descent rests on two elementary mechanisms: a large clique
controls centralizer indices inside a coset, while a clique outside
a subgroup forces either a small abelian index or a small
centralizer. We record both mechanisms before the contraction step.

\begin{lemma}[Coset bounds]\label{lem:covergroup}
For $B\le G$ and $S=yB$, some $x\in S$ satisfies
$[B:C_B(x)]\le\om(S)$. Also $a(G)\le\om(G)\indab(G)$.
\end{lemma}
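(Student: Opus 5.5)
The plan is to prove the two assertions separately. Both use only the local orthogonality structure already exploited in the covering inequality of \eqref{eq:budget}, now transported from alternating maps to arbitrary groups.

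\emph{First assertion.} Fix $B\le G$ and $S=yB$, and choose a maximal pairwise noncommuting family $x_1,\dots,x_k$ in $S$, so that $k\le\om(S)$. By maximality every element of $S$ commutes with some $x_i$. For a fixed $x=x_i$, the elements of $S=xB$ commuting with $x$ are exactly those $xb$ with $b\in C_B(x)$, because $[x,xb]=1$ if and only if $[x,b]=1$. Hence these elements form the single coset $xC_B(x)$. The $k$ cosets $x_iC_B(x_i)$ therefore cover $S$, and counting gives
\[
|B|\le\sum_{i=1}^k|C_B(x_i)|\le k\max_i|C_B(x_i)|.
\]
Choosing $x=x_i$ with $|C_B(x_i)|$ maximal yields $[B:C_B(x)]\le k\le\om(S)$. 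This is the only place finiteness enters; in the finite setting of our Conventions it is immediate.

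\emph{Second assertion.} Let $A$ be an abelian subgroup of index $\indab(G)$, and partition $G$ into the $\indab(G)$ left cosets $yA$. I will show that each coset is covered by at most $\om(G)$ abelian subgroups. In $S=yA$, two elements $ya,yb$ commute if and only if $a^{-1}b\in C_A(y)$, since $A$ is abelian. To verify this, write $[ya,yb]=1$. Because $a$ and $b$ commute, this reduces to $y$ commuting with $a^{-1}b$ after conjugation by elements of $A$ that fix $C_A(y)$ setwise. Here $C_A(y)$ is normalized by $A$, as $A$ is abelian.

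So commuting is an equivalence relation on $S$ whose classes are the cosets $yaC_A(y)$. Representatives of distinct classes are pairwise noncommuting, so there are at most $\om(G)$ classes. Each class lies in the subgroup $\langle ya,C_A(y)\rangle$, which is abelian: $C_A(y)$ is abelian, and $ya$ centralizes it because $a\in A$ and $y$ both centralize $C_A(y)$. Summing over cosets gives $a(G)\le\om(G)\indab(G)$.

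\emph{Main obstacle.} The only delicate step is the commutator identity $[ya,yb]=1\Leftrightarrow a^{-1}b\in C_A(y)$ in a nonabelian ambient group. Expanding directly, $yayb=ybya$ is equivalent to $a\,y\,b=b\,y\,a$. Multiplying on the left by $b^{-1}$ and using $ab=ba$, this becomes $a(b^{-1}yb)=y\,a$. I first tried to rewrite this as $y$ commuting with $ab^{-1}$ up to conjugation. If that does not work cleanly, I will avoid the equivalence-relation claim altogether. Instead I will reuse the first-assertion mechanism: the elements of $S$ commuting with a fixed $x\in S$ form $x\,C_A(x)$, and $\langle x,C_A(x)\rangle$ is abelian. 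Taking a maximal noncommuting family $x_1,\dots,x_k$ in $S$, with $k\le\om(G)$, maximality shows that the subgroups $\langle x_i,C_A(x_i)\rangle$ cover $S$. This gives the bound directly, with no need for transitivity.
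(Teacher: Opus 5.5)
Your proof is correct and is essentially the paper's: the first part uses the same covering of $S$ by the cosets $x_iC_B(x_i)$, and the second uses the same coset-by-coset cover of $yA$ by the abelian subgroups $\langle ya,C_A(y)\rangle$. The identity you hesitated over does close cleanly, since $a(b^{-1}yb)=ya$ gives $b^{-1}yb=a^{-1}ya$, i.e.\ $ab^{-1}\in C_A(y)$; your fallback via a maximal noncommuting family in $yA$ is equally valid.
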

\begin{proof}
If $x_1,\ldots,x_k$ is a maximum clique in $S$, the sets
$x_iC_B(x_i)$ cover $S$, so one has size at least $|B|/k$.
For the second assertion take an abelian subgroup $A$ of minimum index.
In $gA$, two elements $ga,gb$ commute exactly when
$ab^{-1}\in D=C_A(g)$. Representatives of $A/D$ give both a
clique and a cover by the abelian subgroups $\langle ga,D\rangle$.
Thus each coset costs at most $\om(G)$ subgroups.
\end{proof}

\begin{lemma}[Rigidity on the complement of a subgroup]\label{lem:rigidity}
Let $D<G$ satisfy $\om(G\setminus D)\le n-1$.
Either $\indab(G)\le n-1$, or some $x\notin D$ satisfies
$[G:C_G(x)]\le n-2$.
\end{lemma}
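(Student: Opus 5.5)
The plan is to work with a maximum clique in $G\setminus D$ and run a Neumann-type covering count. If the count is not tight, some centralizer is small. If it is tight, the tightness itself produces an abelian centralizer.

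\emph{Covering and counting.} Let $x_1,\ldots,x_k$ be a maximum pairwise noncommuting subset of $G\setminus D$, so $k\le n-1$. Put $C_i=C_G(x_i)$. By maximality, every $y\in G\setminus D$ commutes with some $x_i$ (for $y=x_i$ this is trivial), so
\[
 G\setminus D\subseteq\bigcup_{i=1}^k (C_i\setminus D).
\]
If some $[G:C_i]\le n-2$ we are done, since $x_i\notin D$. So assume $[G:C_i]\ge n-1$ for all $i$, and put $d=[G:D]\ge2$. Since $C_i\cap D$ has index at most $d$ in $C_i$, we get $|C_i\setminus D|\le|C_i|(1-1/d)$. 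Hence
\[
 |G|(1-1/d)=|G\setminus D|\le\sum_i|C_i\setminus D|\le\frac{k}{n-1}\,|G|\,(1-1/d)\le|G|(1-1/d).
\]
Thus every inequality is an equality. In particular $k=n-1$, $[G:C_i]=n-1$ for every $i$, and the sets $C_i\setminus D$ are pairwise disjoint and partition $G\setminus D$.

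\emph{Exploiting the equality.} Fix $i$ and take any $y\in C_i\setminus D$. Then $y$ commutes with $x_i$, and by disjointness $y$ commutes with no $x_j$ for $j\ne i$. Hence $\{x_j:j\ne i\}\cup\{y\}$ is again a maximum clique in $G\setminus D$. Running the same count with this clique gives that $C_G(y)\setminus D$ is one block of a partition of $G\setminus D$. The other blocks are the sets $C_j\setminus D$ with $j\ne i$, since they come from the same $x_j$ as before. So $C_G(y)\setminus D=C_i\setminus D$.

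Consequently any two elements of $C_i\setminus D$ commute. The set $C_i\setminus D$ is nonempty (it contains $x_i$), so $C_i\cap D$ is a proper subgroup of $C_i$. The complement of a proper subgroup generates the whole group, so $C_i$ is generated by the pairwise commuting set $C_i\setminus D$ and is therefore abelian. Since $[G:C_i]=n-1$, this gives $\indab(G)\le n-1$.

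\emph{Main obstacle.} The delicate step is the rigidity argument in the equality case. One must check that replacing $x_i$ by $y$ really yields another maximum clique to which the tight count applies verbatim. One must also check that the resulting partition forces $C_G(y)\setminus D$ to coincide with $C_i\setminus D$ rather than merely to have the same size. The counting step itself only needs the observation $|C_i\cap D|\ge|C_i|/d$; naive bounds that ignore $D$ fall just short of a contradiction.
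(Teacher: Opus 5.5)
Your proof is correct and follows the paper's argument: the same Neumann-type count using $[C_i:C_i\cap D]\le[G:D]$, the same equality analysis forcing $k=n-1$ and a disjoint cover, and the same observation that the complement of the proper subgroup $C_i\cap D$ generates $C_i$. The only difference is how you show each block $C_i\setminus D$ is commuting. The paper swaps $x_i$ for two noncommuting elements of the block, which produces a clique of size $n$ in $G\setminus D$. You instead swap in a single $y$ and rerun the tight count to get $C_G(y)\setminus D=C_i\setminus D$; that rerun needs $[G:C_G(y)]\ge n-1$, which is harmless, since otherwise $y\notin D$ already gives the second alternative.
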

\begin{proof}
Put $S=G\setminus D$ and $d=[G:D]$, and suppose that
$[G:C_G(x)]\ge n-1$ for every $x\in S$.
For $C=C_G(x)$, we have $q=[C:C\cap D]\le d$, whence
\[
 |C\cap S|=|C|(1-1/q)
 \le\frac{|G|(1-1/d)}{n-1}=\frac{|S|}{n-1}.
\]
A maximal clique $x_1,\ldots,x_k$ in $S$ gives a cover by the
sets $C_G(x_j)\cap S$. Since $k\le n-1$, equality holds throughout:
$k=n-1$, the sets are disjoint, and each $C_G(x_j)$ has index $n-1$.
Each part is a commuting set. Indeed, if $y,z$ in the $j$-th
part did not commute, disjointness of the cover would make both
$y$ and $z$ noncommuting with every $x_i$ for $i\ne j$.
Replacing $x_j$ by $y,z$ would then give a clique of size $n$
in $S$, a contradiction. Moreover, $C_G(x_j)\cap S$ generates
$C_G(x_j)$: if $s$ belongs to
this part and $a\in C_G(x_j)\cap D$, then $a=s^{-1}(sa)$ with both
$s$ and $sa$ in the part. Thus $C_G(x_j)$ is abelian and
$\indab(G)\le n-1$.
\end{proof}

\begin{lemma}[Centralizer extension and small quotients]\label{lem:small-quotients}
If $x\notin Z(G)$, then $\om(C_G(x))\le\om(G)-2$.
If a nonabelian group $T$ has clique number three, then
$T/Z(T)\cong C_2^2$ and $\indab(T)=2$.
If its clique number is four, then
$T/Z(T)\cong S_3$ or $C_3^2$, and $\indab(T)\le3$.
Here $S_3$ is the symmetric group on three letters.
\end{lemma}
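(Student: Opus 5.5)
The lemma has three assertions, and I would treat them in order.

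\emph{Centralizer extension.} Let $x\notin Z(G)$ and put $C=C_G(x)$. Take a maximum clique $y_1,\ldots,y_k$ in $C$. I would append two elements outside $C$ to it.
\begin{enumerate}
\item If $C$ is abelian then $\om(C)=1$. Since $x$ is noncentral, some $g$ fails to commute with $x$, and $\{x,g\}$ shows $\om(G)\ge2$. If $\om(G)=2$ the inequality $\om(C)\le\om(G)-2$ fails, so this degenerate case must be handled separately; I expect either that the paper's $\om$ counts only noncentral elements, or that the statement is intended for nonabelian $C$. I will focus on nonabelian $C$, where $k\ge2$.
\item For nonabelian $C$, first choose $g\notin C$. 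Each set $C_G(y_i)$ is a proper subgroup, because $y_i$ fails to commute with $y_j$ for $j\ne i$. The subgroup $C$ is proper as well. A group is never the union of two proper subgroups, so I must instead count cosets in $G$.
\item The key choice is to replace $y_i$ by $y_i x^{\eps_i}$. Since $x$ is central in $C$, this does not change the commutation pattern among the $y_i$. Then I pick $g$ outside $C\cup\bigcup_i C_G(y_i)$. A Neumann-type argument or the covering trick (the sets $C_G(y_i)x$ versus $C_G(y_i)$) should give such a $g$.
\item Finally I adjoin both $g$ and $gx$. These do not commute with each other, since $[g,gx]=[g,x]\ne1$. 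Each $y_i$ fails to commute with both $g$ and $gx$ provided $g\notin C_G(y_i)$ and $gx\notin C_G(y_i)$.
\end{enumerate}
The main obstacle is ensuring that a single $g$ works for all $i$ simultaneously. The sets $C_G(y_i)\cup C_G(y_i)x^{-1}$ are unions of two cosets of proper subgroups, which need not miss the whole of $G\setminus C$. If a direct count fails, I would try a maximal-counterexample or induction argument instead.

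\emph{Clique number three.} If $\om(T)=3$, then for each noncentral $x$ the first part gives $\om(C_T(x))\le1$, so every centralizer of a noncentral element is abelian. In that situation a maximum clique $x_1,x_2,x_3$ yields a cover of $T$ by three abelian centralizers. A group covered by three proper subgroups has an elementary abelian quotient $C_2^2$, by Scorza's theorem. Here the three subgroups pairwise intersect in $Z(T)$, which gives $T/Z(T)\cong C_2^2$. Each centralizer then has index $2$, so $\indab(T)=2$.

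\emph{Clique number four.} If $\om(T)=4$, then the proper centralizers $C_T(x)$ have clique number at most $2$. A nonabelian group has clique number at least $3$, so these centralizers are abelian again. Hence $T$ is covered by four abelian centralizers meeting in $Z(T)$. By Cohn's classification of minimal $4$-covers, or by Lemma~\ref{lem:rigidity} with $D=Z(T)$, the quotient $T/Z(T)$ is $S_3$ or $C_3^2$. In either case some centralizer has index at most $3$, so $\indab(T)\le3$.
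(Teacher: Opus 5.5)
\textbf{First gap: the centralizer extension is left unproved.} Both later parts rest on this step. You name ``a single $g$ that works for all $i$'' as the main obstacle and leave it open. Two observations remove it.

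First, each (twisted) $y_i$ lies in $C_G(x)$, so $x\in C_G(y_i)$ and $C_G(y_i)x^{-1}=C_G(y_i)$. Hence the two conditions $g\notin C_G(y_i)$ and $gx\notin C_G(y_i)$ in your step~4 are one condition, since $[y_i,gx]=[y_i,g]^x$.

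Second, the twists must be chosen \emph{after} $g$, depending on $g$. With twists fixed in advance, no suitable $g$ need exist. For example, take $G=H\times S_3$, $x=(1,(123))$ and $y_i=(h_i,1)$ with $\{h_i\}$ a maximum clique of $H$. Then the untwisted centralizers $C_H(h_i)\times S_3$ cover $G$.

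So fix any $g$ with $[g,x]\ne1$. Set $y_i'=y_i$ if $[y_i,g]\ne1$, and $y_i'=y_ix$ otherwise; in the latter case $[y_ix,g]=[y_i,g]^x[x,g]=[x,g]\ne1$. As $x$ is central in $C_G(x)$, the $y_i'$ remain pairwise noncommuting, and each fails to commute with both $g$ and $gx$. Also $[g,gx]=[g,x]\ne1$. Thus $\{y_1',\ldots,y_k',g,gx\}$ is a clique of size $k+2$, and no counting is needed. This is exactly the paper's argument, with $B=C_G(x)$, $t=x$, and the clique $\{g,gx\}$ in the coset $gZ(B)$.

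Your worry about abelian $C_G(x)$ is unfounded. Every nonabelian group has $\om\ge3$ (take $a,b,ab$ with $[a,b]\ne1$), a fact you use yourself later, and the construction handles $k=1$ anyway.

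\textbf{Second gap: the clique-number-four case.} Cohn's theorem on unions of four proper subgroups at best produces a \emph{quotient} isomorphic to $S_3$ or $C_3^2$, not the isomorphism type of $T/Z(T)$. Lemma~\ref{lem:rigidity} with $D=Z(T)$ yields only an index bound.

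What you need is that the centralizers $C_j=C_T(x_j)$ of a maximum clique meet pairwise in exactly $Z(T)$. Indeed, a noncentral $z\in C_i\cap C_j$ would have an abelian centralizer containing the noncommuting elements $x_i,x_j$. So the images of the $C_j$ partition $T/Z(T)$. With $M=[T:Z(T)]$ and $m_j=[C_j:Z(T)]$ in decreasing order, this gives $M=1+\sum_j(m_j-1)$, $m_1m_2\le M$ and $m_j\mid M$. Hence $m_1\le\om(T)-1$, so $M=4$ for clique number three and $M\in\{6,9\}$ for clique number four. Excluding cyclic central quotients then gives $C_2^2$, $S_3$ or $C_3^2$, together with the index bounds. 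This counting is the paper's route.

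Your Scorza argument for clique number three is a valid alternative once two things are in place. You must supply the pairwise-intersection fact, which you assert without proof. You must also use Scorza's theorem in its sharp form, which identifies the kernel of the map onto $C_2^2$ with the common pairwise intersection. The results you cite give no analogous conclusion for clique number four.
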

\begin{proof}
More generally, if $B\le G$ and $g$ does not centralize $Z(B)$, then
$\om(G)\ge\om(B)+[Z(B):C_{Z(B)}(g)]$.
Choose $t\in Z(B)$ not commuting with $g$. In a clique of $B$, replace
any element commuting with $g$ by its product with $t$.
Pairwise commutators are unchanged, and the resulting clique commutes
with no element of $gZ(B)$. This coset contains a clique of size
$[Z(B):C_{Z(B)}(g)]$ by the coset calculation in
Lemma~\ref{lem:covergroup}. Taking $B=C_G(x)$ proves the first assertion.

If $r=\om(T)\le4$, every proper centralizer is abelian, since otherwise
the first assertion gives a clique of size at least five.
Distinct proper centralizers $C_1,\ldots,C_r$ intersect in $Z(T)$;
the sets $C_j\setminus Z(T)$ partition $T\setminus Z(T)$, and elements
in different parts do not commute. There are exactly $r$ such
centralizers: choosing one element from each part gives a clique,
whereas no clique contains two elements of the same part.
Put $M=[T:Z(T)]$ and $m_j=[C_j:Z(T)]$, ordered decreasingly. Then
\[
 M=1+\sum_j(m_j-1),\qquad m_1m_2\le M\le m_1+(r-1)(m_2-1),
\]
so $m_1\le r-1$. For $r=3$, all $m_j=2$ and $M=4$.
For $r=4$, each $m_j$ is two or three. If both values occur,
$6\mid M$ and $6\le M\le8$, hence $M=6$.
If all $m_j=2$, then $M=5$, contradicting $2\mid M$;
if all $m_j=3$, then $M=9$.
A nonabelian group cannot have cyclic central quotient, giving the
stated quotients. The largest $C_j$ gives the asserted abelian index.
\end{proof}

The following lemma isolates the product mechanism used in
\cite[Lemma~5.7]{Lecomte2026}.

\begin{lemma}[Amplification]\label{lem:amplification}
Let $H,L\le G$ satisfy $[H,L]=1$. If $T\subseteq L$ has $[t,t']\notin H'$ for
distinct $t,t'\in T$, then $\om(G)\ge|T|\om(H)$. In particular,
\[
 \om(G)\ge\om\bigl(L/(L\cap H)\bigr)\om(H).
\]
If $X\le C_G(H)$ is abelian and $q=[HX:H]$, then
$\indab(G)\le[G:H]\indab(H)/q$.
\end{lemma}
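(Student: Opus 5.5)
The plan is to prove the three assertions of Lemma~\ref{lem:amplification} separately. Each reduces to a direct construction using $[H,L]=1$.

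\emph{First assertion.} Let $C=\{c_1,\dots,c_k\}$ be a maximum clique of $H$, so $k=\om(H)$, and consider the set $\{c_it:1\le i\le k,\ t\in T\}$. I will show it has $k|T|$ elements and is pairwise noncommuting. Because $H$ and $L$ commute elementwise, the commutator identities in $HL$ give
\[
 [c_it,c_jt']=[c_i,c_j]\,[t,t'].
\]
The first factor lies in $H'$. The second is nontrivial, and if $t\ne t'$ it lies outside $H'$; in that case the product cannot be trivial.

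If $t=t'$ and $i\ne j$, the product reduces to $[c_i,c_j]\ne1$. So the set is pairwise noncommuting. Distinctness follows similarly: if $c_it=c_jt'$ with $t\ne t'$, then $c_it$ and $c_jt'$ would commute, contradicting the computation above, and if $t=t'$ then $c_i=c_j$.

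The one point needing care is the expansion of $[xt,yt']$ with $x,y\in H$ and $t,t'\in L$. Since $t$ and $t'$ commute with $x$ and $y$, we can write
\[
 (xt)^{-1}(yt')^{-1}(xt)(yt')=x^{-1}y^{-1}xy\cdot t^{-1}t'^{-1}tt'.
\]
Here the two products interleave freely, so this is a direct check.

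\emph{The ``in particular''.} Choose $T\subseteq L$ consisting of representatives of a maximum clique of $L/(L\cap H)$. I have to check that $[t,t']\notin H'$ for distinct $t,t'\in T$. This is the main obstacle, because noncommutation in $L/(L\cap H)$ only gives $[t,t']\notin L\cap H$, whereas we need $[t,t']\notin H'$.

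This suffices: $[t,t']$ lies in $L$, and $H'\le H$, so $[t,t']\in H'$ would force $[t,t']\in L\cap H$. Hence the condition holds and the first assertion applies. The representatives are distinct elements, and the quotient clique size is exactly $|T|$.

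\emph{Index assertion.} Let $A\le H$ be abelian with $[H:A]=\indab(H)$. Since $X$ centralizes $H$ and is abelian, $AX$ is an abelian subgroup. Its order is
\[
 |AX|=\frac{|A||X|}{|A\cap X|}\ge\frac{|A||X|}{|H\cap X|}=\frac{|A|\,q\,|H|}{|H|}=|A|\,q,
\]
using $q=[HX:H]=[X:X\cap H]$ and $A\cap X\le H\cap X$. Therefore
\[
 [G:AX]\le\frac{|G|}{q|A|}=\frac{[G:H]\,\indab(H)}{q}.
\]
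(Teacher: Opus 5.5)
Your proposal is correct and follows the paper's proof essentially step by step: the product clique built from the identity $[xt,yt']=[x,y][t,t']$; lifting a quotient clique using that $[t,t']\in L$, so $[t,t']\in H'\le H$ would force $[t,t']\in L\cap H$; and the index saving from the abelian subgroup $AX$ with $[AX:A]\ge[X:X\cap H]=q$. The one point the paper states that you leave implicit is that $[H,L]=1$ gives $L\cap H\le Z(L)$, which is what makes $L/(L\cap H)$ a group in the first place.
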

\begin{proof}
For a clique $E\subseteq H$, the identity
$[te,t'e']=[t,t'][e,e']$ shows that all $|T||E|$ indexed products
are pairwise noncommuting and hence distinct: the two commutator
factors cannot cancel when $t\ne t'$, and when $t=t'$ the second
factor is nontrivial for $e\ne e'$.
For the quotient assertion, $L\cap H\le Z(L)$, and lifted clique
commutators lie outside $L\cap H$, hence outside $H'$.
For the index bound, if $A\le H$ is abelian of minimum index, then
$AX$ is abelian and $[AX:A]\ge[X:X\cap H]=q$.
\end{proof}

\begin{proposition}[Structural contraction]\label{prop:descent}
Let $G'\not\le Z(G)$ and $n\ge\om(G)$.
Either $\indab(G)\le n-1$, or there is $H\le G$ satisfying
one of the two contractions
\[
\begin{array}{c|cc}
 &\om(H)&\indab(G)/\indab(H)\\ \hline
 A&\le\lfloor n/3\rfloor&\le(n-2)^2/2\\
 B&\le\lfloor n/5\rfloor&\le(n-2)^2(n-3)/4.
\end{array}
\]
\end{proposition}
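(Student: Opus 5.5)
We argue by a two-stage centralizer descent built from Lemmas~\ref{lem:rigidity}, \ref{lem:small-quotients} and \ref{lem:amplification}, and throughout we assume $\indab(G)\ge n$.

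\emph{First step.} Apply Lemma~\ref{lem:rigidity} with $D=Z(G)$. Any clique in $G\setminus Z(G)$ extends by a central element only if it is empty, so we need $\om(G\setminus Z(G))\le n-1$. If instead $\om(G\setminus Z(G))=n$, we use Lemma~\ref{lem:covergroup} on a suitable coset, and the rigidity alternative still produces the same conclusion. Either way we obtain $x\notin Z(G)$ with $[G:C_G(x)]\le n-2$. Put $C_1=C_G(x)$. By Lemma~\ref{lem:small-quotients}, $\om(C_1)\le n-2$.

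\emph{The amplification mechanism.} We want to shrink the clique number by a factor of $3$ rather than by $2$. Since $x\in Z(C_1)$, we look at $L=\langle x\rangle$, or more generally at a subgroup commuting with a candidate $H\le C_1$. Lemma~\ref{lem:amplification} gives $\om(G)\ge\om(L/(L\cap H))\,\om(H)$. So we seek $H\le C_1$ together with a commuting subgroup $L$ whose quotient $L/(L\cap H)$ has clique number at least $3$. Then $\om(H)\le\lfloor n/3\rfloor$.

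To find it we use the hypothesis $G'\not\le Z(G)$: some commutator $[u,v]$ is noncentral. Take $H=C_G(x)\cap C_G(y)$ for a second element $y$, chosen by a second application of Lemma~\ref{lem:rigidity} inside $C_1$ with $D=C_1\cap C_G(\cdot)$. This gives $[G:H]\le(n-2)^2$. The group generated by $x,y$ and their centralizing data plays the role of $L$. If the group $T$ generated by $x,y$ modulo $H$ is nonabelian with clique number $3$, then Lemma~\ref{lem:small-quotients} gives $T/Z(T)\cong C_2^2$. In that case an abelian $X\le C_G(H)$ of order ratio $q\ge2$ exists, and the index part of Lemma~\ref{lem:amplification} yields $\indab(G)/\indab(H)\le[G:H]/2\le(n-2)^2/2$. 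This is branch A.

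\emph{Branch B.} If the small quotient has clique number $4$ or more, with $T/Z(T)\cong S_3$ or $C_3^2$ and $\indab(T)\le3$, we descend once more. This adds a third centralizer of index at most $n-3$, since the clique number has already dropped. It costs a factor $(n-3)$ in $[G:H]$, and the gain is $q\ge4$ from the bounds on $\indab$ in Lemma~\ref{lem:small-quotients}. Amplification then gives a factor of $5$ in the clique number, since $4+1$ noncommuting directions are available. This yields $\om(H)\le\lfloor n/5\rfloor$ and the ratio $(n-2)^2(n-3)/4$.

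\emph{Main obstacle.} The hard step is guaranteeing a subgroup $L$ commuting with $H$ that carries a clique of size at least $3$ modulo $L\cap H$. The condition $[t,t']\notin H'$ must hold for distinct $t,t'\in T$, not merely modulo the centre. The hypothesis $G'\not\le Z(G)$ has to be converted into exactly this: a noncentral commutator must lift to a triple in $C_G(H)$ whose commutators avoid $H'$. I would try to prove it by choosing $x$ itself to be a commutator, or an element of $G'\setminus Z(G)$, using the coset bound of Lemma~\ref{lem:covergroup} inside $G'Z(G)$. I would then check that the resulting centralizers do not collapse to the abelian case. If they do collapse, that forces $\indab(G)\le n-1$.
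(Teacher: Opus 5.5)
There is a genuine gap, and it is the step you yourself flag as the main obstacle; the earlier steps also fail as written. Lemma~\ref{lem:rigidity} cannot be applied with $D=Z(G)$. No clique extends by a central element, so $\om(G\setminus Z(G))=\om(G)$, which may equal $n$, and ``Lemma~\ref{lem:covergroup} on a suitable coset'' does not repair this. Taking $y$ inside $C_1=C_G(x)$ makes $\langle x,y\rangle$ abelian, so there is nothing to amplify. What you need are two elements with small centralizers and a \emph{noncentral} commutator. The paper applies Lemma~\ref{lem:rigidity} to $D_t=\{g:[t,g]\in Z(G)\}$ with $t\notin Z_2(G)$; such $t$ exists since $G'\not\le Z(G)$. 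Then $D_t$ is proper and contains $t$ and $Z_2(G)$, and every clique outside $D_t$ extends by $t$, so $\om(G\setminus D_t)\le n-1$. Doing this first for some $w\notin Z_2(G)$, and then for the resulting $u\notin D_w\supseteq Z_2(G)$, gives $u,v$ with $[G:C_G(u)],[G:C_G(v)]\le n-2$ and $c=[u,v]\notin Z(G)$.

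The missing idea for the factor three is to use the first form of Lemma~\ref{lem:amplification}, not the quotient form. Put $K=C_G(u,v)$, so $[G:K]\le(n-2)^2$. The quotient form with $L=\langle u,v\rangle$ cannot work: $L\cap C_G(L)=Z(L)$, and $L/Z(L)$ is abelian (clique number $1$) whenever $c\in K$. Instead take $T=\{u,v,uv\}$, whose pairwise commutators are conjugates of $c^{\pm1}$ by elements normalizing $K'$, and split on whether $c\in K'$.

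If $c\notin K'$, this gives $3\om(K)\le n$, and $X=\langle u\rangle$ saves the factor $2$; this is branch $A$.

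If $c\in K'$, then $c\in Z(K)\setminus Z(G)$. For $z_0\notin C_G(c)$, replacing each $t\in T$ by $t$ or $tc$ yields three pairwise noncommuting elements that centralize $K$ and commute with no element of $z_0K$. Hence $\om(z_0K)\le n-3$, and Lemma~\ref{lem:covergroup} gives $z\in z_0K$ with $[K:C_K(z)]\le n-3$. This, not a drop in clique number, is the source of the factor $n-3$. Now set $L=\langle u,v,z\rangle$, $N=Z(L)$ and $H=C_G(L)$. One proves $\om(L/N)\ge5$ by \emph{excluding} each of the quotients $C_2^2$, $C_3^2$, $S_3$ of Lemma~\ref{lem:small-quotients}, using $[c,z]\ne1$ and the fact that $u,v$ centralize $c$. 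The saving $4$ comes from $X=\langle u,c\rangle$ with $|XN/N|\ge4$, not from $\indab(T)\le3$.

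So your dichotomy ``quotient clique number $3$ versus at least $4$'' is not the operative one, and neither of your branches is established.
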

\begin{proof}
We construct two noncommuting elements with small centralizers,
then split according to whether their commutator belongs to the
derived subgroup of the common centralizer. The latter case
requires a third element and provides the stronger clique reduction.

Assume $\indab(G)>n-1$. Write $Z_2(G)/Z(G)=Z(G/Z(G))$.
For $t\notin Z_2(G)$, the subgroup
$D_t=\{x\in G:[t,x]\in Z(G)\}$ is proper and contains $Z_2(G)$.
Every clique in $G\setminus D_t$ extends by $t$, so its size is at
most $n-1$. As $G'\not\le Z(G)$, there is $w\notin Z_2(G)$.
Apply Lemma~\ref{lem:rigidity} to $D_w$ to obtain $u\notin D_w$
whose centralizer has index at most $n-2$. Because $D_w$ contains
$Z_2(G)$, we may apply the lemma again, now to $D_u$, to obtain
$v\notin D_u$ with the same index bound. In particular
\[
 c=[u,v]\notin Z(G),\qquad
 [G:C_G(u)],[G:C_G(v)]\le n-2.
\]
Put $K=C_G(u,v)$ and $U=\langle u,v\rangle$; thus $[G:K]\le(n-2)^2$
and $[U,K]=1$. The two alternatives $c\notin K'$ and $c\in K'$ are exhaustive.
The first gives a three-point clique multiplier and an index saving
of two; the second requires a third generator but gives a five-point
multiplier and an index saving of four.

\emph{Branch $A$: $c\notin K'$.}
The triple $u,v,uv$ has all pairwise commutators outside $K'$:
these are conjugates of $c^{\pm1}$. Since $u$ and $v$ centralize
$K$ pointwise, they normalize $K'$ and consequently none of
these conjugates can lie in $K'$.
Lemma~\ref{lem:amplification} thus gives $3\om(K)\le n$.
Moreover $u\notin K$, so adjoining $\langle u\rangle$ to a
minimum-index abelian subgroup of $K$ saves at least a factor of
$2$. Consequently
\[
 \om(K)\le\lfloor n/3\rfloor,\qquad
 \indab(G)\le\frac{(n-2)^2}{2}\indab(K),
\]
which is branch $A$.

\emph{Branch $B$: the commutator lies in $K'$.}
Suppose $c\in K'$. Since $U$ centralizes $K$, its commutator
$c=[u,v]$ centralizes $K$ and lies in $K$; thus $c\in Z(K)$.
Moreover $u,v$ centralize $c$, so $c$ commutes with all of $U$.
The element $c$ is noncentral in $G$, hence there exists
$z_0\notin C_G(c)$. For each $t\in\{u,v,uv\}$ choose $t$ if
$[t,z_0]\ne1$, and choose $tc$ otherwise. The selected element
does not commute with $z_0$, since $[c,z_0]\ne1$; the three
selected elements are pairwise noncommuting, because $c$ commutes
with $u,v$. They all centralize $K$ and are noncommuting with
every element of $z_0K$. Hence every clique in $z_0K$ extends
by these three points, so
\[
 \om(z_0K)\le n-3.
\]
The coset bound in Lemma~\ref{lem:covergroup} gives
$z\in z_0K$ with $[K:C_K(z)]\le n-3$.
As $c\in Z(K)$, $[c,z]\ne1$. Set
\[
 L=\langle u,v,z\rangle,\qquad
 H=C_G(L)=K\cap C_G(z),\qquad
 N=L\cap H=Z(L).
\]
The index estimate is
\begin{equation}\label{eq:B-index-expanded}
 [G:H]\le(n-2)^2(n-3).
\end{equation}

\emph{Excluding small central quotients.}
We claim that $\om(L/N)\ge5$. Put $Q=L/N$. If $Q$ were
abelian, then $[u,v]=c\in N$, contradicting $[c,z]\ne1$;
thus $Q$ is nonabelian. Suppose that $\om(Q)\le4$.
Lemma~\ref{lem:small-quotients} implies
\[
 Q/Z(Q)\cong C_2^2,\quad C_3^2,\quad\text{or }S_3.
\]
We rule out all three possibilities.

If $Q/Z(Q)\cong S_3$, let $\bar u,\bar v$ be the images of
$u,v$. Since $u$ and $v$ centralize $c$, both images centralize
$[\bar u,\bar v]$. A nonidentity commutator in $S_3$ is a
$3$-cycle; the centralizer of a $3$-cycle is the cyclic group of
order three. Hence, if the commutator is nonidentity, $\bar u$
and $\bar v$ lie in the same cyclic subgroup. If it is the
identity, they generate an abelian (therefore cyclic) subgroup
of $S_3$ as well. In either case, $\bar u$ and $\bar v$ lie in
a common cyclic subgroup of $Q/Z(Q)$. Their lifts to $Q$
therefore commute: writing them as powers of a fixed lift times
central elements shows $[uN,vN]=1$. This says $c\in N$, a
contradiction.

If $Q/Z(Q)\cong C_p^2$ for $p\in\{2,3\}$, the images of
$u,v$ must be linearly independent. Indeed, dependence would
put them in a single cyclic subgroup of $Q/Z(Q)$ and hence make
their lifts commute in $Q$, forcing $c\in N$. Their images
therefore span $Q/Z(Q)$. Consequently, there are integers $a,b$
such that $w=zu^av^b$ has image in $Z(Q)$, or equivalently
$w\in Z_2(L)$, where $Z_2(L)$ is the preimage of $Z(Q)$ in $L$.
An element $w\in Z_2(L)$ acts on $L$ by a central automorphism:
for every $x\in L$, $x^w=xz_x$ with $z_x\in Z(L)$.
Thus $[x,y]^w=[xz_x,yz_y]=[x,y]$, showing that $w$
centralizes $L'$, in particular $c=[u,v]$.
But $u$ and $v$ centralize $c$, so $z$ centralizes $c$,
contrary to $[c,z]\ne1$. The claim follows.

\emph{Clique amplification and index saving.}
By Lemma~\ref{lem:amplification},
\begin{equation}\label{eq:B-clique-expanded}
 5\om(H)\le\om(G)\le n.
\end{equation}
To save the additional index factor, set
$X=\langle u,c\rangle\le C_G(H)$. This group is abelian,
since $u$ centralizes $c$. The images $uN,cN$ in $Q$ are both
nonidentity: $u\in N$ would force $[u,v]=1$, while $c\in N$
would contradict $[c,z]\ne1$. Their generated subgroup is
abelian. If it had order less than four, it would be cyclic of
order two or three; in particular $uN$ would be a power of
$cN$. Because $c$ centralizes $v$, this would imply
$[uN,vN]=1$, a contradiction. Hence
\[
 [HX:H]=|X/(X\cap H)|=|XN/N|\ge4.
\]
Applying the index inequality in Lemma~\ref{lem:amplification}
and \eqref{eq:B-index-expanded} now gives
$\indab(G)/\indab(H)\le(n-2)^2(n-3)/4$, completing branch $B$.
\end{proof}

Both contractions reduce the clique parameter by a fixed factor.
The following uniform estimate is proved directly from these two
branches. It avoids accumulating a polynomial factor through the
descent.

\begin{lemma}[Uniform exponential index]\label{prop:uniform-index}
Every finite group $G$ satisfies
\begin{equation}\label{eq:uniform-eight}
 \indab(G)\le 8\,2^{\om(G)/2}.
\end{equation}
\end{lemma}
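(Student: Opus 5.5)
We argue by strong induction on $\om(G)$; equivalently, we take a minimal counterexample. Put $n=\om(G)$. If $G$ is abelian, then $\indab(G)=1$ and there is nothing to prove. If $G$ is nonabelian of class two, Corollary~\ref{cor:class2} gives $\indab(G)\le2^{n/2}$ directly; this uses the finite nilpotent decomposition and does not need the constant $8$. Nonnilpotent groups of class two do not exist, so the remaining case is $G'\not\le Z(G)$. There we apply Proposition~\ref{prop:descent} with $n=\om(G)$.

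The descent splits into three alternatives.
\begin{enumerate}
\item If $\indab(G)\le n-1$, we need $n-1\le8\cdot2^{n/2}$. This holds for all $n\ge1$, since $n-1\le8\cdot2^{n/2}$ is elementary: it is trivial for small $n$ and exponential growth wins beyond that.
\item In branch $A$, induction gives $\indab(H)\le8\cdot2^{\lfloor n/3\rfloor/2}$. Hence it suffices that $\frac{(n-2)^2}{2}\,2^{n/6}\le2^{n/2}$, that is, $(n-2)^2\le2^{1+n/3}$.
\item In branch $B$, we similarly need $\frac{(n-2)^2(n-3)}{4}\,2^{n/10}\le2^{n/2}$, that is, $(n-2)^2(n-3)\le2^{2+2n/5}$.
\end{enumerate}
In both branches the factor $8$ from the inductive hypothesis cancels against the $8$ on the right. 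So the induction closes whenever the polynomial comparison holds. This is the point of the lemma: no polynomial factor accumulates, because each step pays its polynomial cost out of the exponential slack $2^{n/2-n/6}$ or $2^{n/2-n/10}$.

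The main obstacle is small and moderate $n$, where these polynomial inequalities fail. For example, $(n-2)^2\le2^{1+n/3}$ holds only for roughly $n\ge 30$, and the cubic comparison only for roughly $n\ge 50$. For $n$ below these thresholds we must use the slack in the constant $8$ instead. The plan is to use the inductive hypothesis in a sharper form when $\om(H)$ is tiny. If $\om(H)\le2$, then $H$ is abelian and $\indab(H)=1$. If $\om(H)\in\{3,4\}$, then $\indab(H)\le3$ by Lemma~\ref{lem:small-quotients}. Combined with the crude bound $\indab(G)\le[G:H]\,\indab(H)$ (via the index estimates above), these give explicit numbers to compare with $8\cdot2^{n/2}$.

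For the remaining range we also use the trivial alternative. Groups with $G'\not\le Z(G)$ have $\om(G)\ge4$, and Lemma~\ref{lem:small-quotients} covers $\om\le4$. Beyond that, there is a direct Neumann/Pyber-type fallback, such as $\indab(G)\le n-1$ from Lemma~\ref{lem:rigidity}, or repeated use of Lemma~\ref{lem:covergroup}. I expect a finite table of checks to remain, presumably for $5\le n\lesssim 60$, comparing $\frac{(n-2)^2}{2}\min\{3,8\cdot2^{\lfloor n/3\rfloor/2}\}$ and the branch-$B$ analogue against $8\cdot2^{n/2}$. If these direct comparisons fail in a window, I would strengthen the induction there. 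The first tool would be the actual index saving $q\ge4$ in branch $B$ and the factor $2$ in branch $A$. The second would be the observation that when $\om(H)$ is small, $H$ is abelian or has tiny abelian index, so that $\indab(G)\le[G:H]/q$ outright. These verifications are the sort presumably collected in Appendix~\ref{app:numerics}, and they are where the care lies.
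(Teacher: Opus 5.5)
\textbf{There is a gap in the range $5\le n\le31$.} Your outer structure matches the paper's. In each branch of Proposition~\ref{prop:descent} the factor $8$ cancels, and the normalized multipliers $\frac{(n-2)^2}{2}2^{-n/3}$ and $\frac{(n-2)^2(n-3)}{4}2^{-2n/5}$ are below $1$ at $n=32$ and decrease thereafter. So your inductive step already works for all $n\ge32$, not only from about $50$. The real content is the window $5\le n\le31$, which you leave as an unperformed table of checks, and the check you sketch there fails.

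First, $\min\{3,8\cdot2^{\lfloor n/3\rfloor/2}\}$ is not a bound on $\indab(H)$ once $\om(H)\ge5$. The valid version uses $1,2,3$ for $\om(H)\le2,3,4$ and the inductive bound $8\cdot2^{\om(H)/2}$ otherwise. With that version the comparison breaks in both branches:
\begin{itemize}
\item At $n=15$, branch $A$ allows $\om(H)=5$ and gives $\frac{13^2}{2}\cdot8\cdot2^{5/2}\approx3824$, whereas $8\cdot2^{15/2}\approx1448$.
\item At $n=31$, branch $B$ allows $\om(H)=6$ and gives $5887\cdot64=376768>8\cdot2^{31/2}\approx370728$.
\end{itemize}
The remedies you list do not close this. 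The savings $2$ and $q\ge4$ are already built into the multipliers $(n-2)^2/2$ and $(n-2)^2(n-3)/4$, so they cannot be used again. Lemma~\ref{lem:small-quotients} only covers $\om(H)\le4$, while the failures come from $\om(H)\in\{5,6,7\}$.

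\textbf{The missing idea} is to carry, in the finite range, the bound the descent actually produces rather than the target bound. Put $U_j=2^{j/2}$ for $j\le4$ and, for $5\le j\le31$,
\[
 U_j=\max\Bigl\{j-1,\ 2^{j/2},\ \frac{(j-2)^2}{2}U_{\lfloor j/3\rfloor},\ \frac{(j-2)^2(j-3)}{4}U_{\lfloor j/5\rfloor}\Bigr\}.
\]
This sequence is nondecreasing. Corollary~\ref{cor:class2}, Lemma~\ref{lem:small-quotients} and Proposition~\ref{prop:descent} then give $\indab(G)\le U_{\om(G)}$ for $\om(G)\le31$ by induction. The values $U_5=\frac92\sqrt2$, $U_6=12\sqrt2$ and $U_7=25\sqrt2$ are far below $8\cdot2^{j/2}$. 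A direct computation gives $U_j<8\cdot2^{j/2}$ on the whole range, with worst ratio $507/64$ at $j=15$. That ratio shows the constant $8$ has almost no slack, which is why a cruder treatment of the window cannot succeed. Once this table is verified, your induction takes over from $n=32$.
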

\begin{proof}
Write $n=\om(G)$. The class-two estimate
$\indab(G)\le2^{n/2}$ and the alternative $\indab(G)\le n-1$
are already below the right-hand side.
For a finite initial verification, set $U_j=2^{j/2}$ for
$0\le j\le4$, and, for $5\le j\le31$, define
\[
 U_j=\max\left\{j-1,2^{j/2},
 \frac{(j-2)^2}{2}U_{\lfloor j/3\rfloor},
 \frac{(j-2)^2(j-3)}4U_{\lfloor j/5\rfloor}\right\}.
\]
Proposition~\ref{prop:descent}, induction on $j$, and the
small-quotient cases in Lemma~\ref{lem:small-quotients} imply
$\indab(G)\le U_n$ when $n\le31$. Direct substitution yields
\[
\begin{array}{c|cccc}
 j\text{ in}&[0,8]&[9,16]&[17,24]&[25,31]\\ \hline
 \max U_j/2^{j/2}&45\sqrt2/16&507/64&1575/256&52371/16384
\end{array}
\]
(the maxima occur at $j=8,15,17,25$, respectively).
Each entry is less than $8$.

For $n\ge32$ we proceed by induction. The two contraction
multipliers, normalized by $2^{n/2}$, satisfy
\[
\begin{aligned}
 \frac{(n-2)^2}{2}2^{(\lfloor n/3\rfloor-n)/2}
 &\le \frac{(n-2)^2}{2}2^{-n/3},\\
 \frac{(n-2)^2(n-3)}4 2^{(\lfloor n/5\rfloor-n)/2}
 &\le \frac{(n-2)^2(n-3)}4 2^{-2n/5}.
\end{aligned}
\]
Both right-hand sides decrease for real $n\ge32$:
their logarithmic derivatives are, respectively,
$2/(n-2)-(\log2)/3$ and
$2/(n-2)+1/(n-3)-2(\log2)/5$, which are negative since
$\log2>2/3$. At $n=32$ they are less than one
(the first is less than $450/2^{10}$, and the second is
$6525/2^{64/5}<6525/(4096\cdot8/5)<1$,
since $2^{4/5}>8/5$).
Thus each contraction preserves the bound
$8\,2^{n/2}$, proving \eqref{eq:uniform-eight}.
\end{proof}

One descent step reduces the clique parameter to at most $n/3$.

\begin{lemma}[Separation outside class two]\label{prop:nonclass2-gap}
If $G'\not\le Z(G)$ and $\om(G)\le n$, then
$a(G)<L_n$ for every even integer $n\ge64$.
\end{lemma}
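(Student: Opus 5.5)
The plan is to apply one step of Proposition~\ref{prop:descent} and feed the resulting subgroup into the uniform index bound of Lemma~\ref{prop:uniform-index}. The passage from index to covering number uses $a(G)\le\om(G)\indab(G)\le n\,\indab(G)$ from Lemma~\ref{lem:covergroup}. Since $G'\not\le Z(G)$, Proposition~\ref{prop:descent} applies with our $n\ge\om(G)$, giving three alternatives.

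\emph{Alternative 0: $\indab(G)\le n-1$.} Then $a(G)\le n(n-1)$. This is far below $L_n=2^{n/2-1}$ for even $n\ge64$, since already $64\cdot63<2^{31}$ and the ratio grows.

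\emph{Branch $A$.} There is $H$ with $\om(H)\le\lfloor n/3\rfloor$ and $\indab(G)\le\frac{(n-2)^2}{2}\indab(H)$. Lemma~\ref{prop:uniform-index} gives $\indab(H)\le8\cdot2^{n/6}$, so
\[
 a(G)\le n\cdot\frac{(n-2)^2}{2}\cdot 8\cdot 2^{n/6}=4n(n-2)^2\,2^{n/6}.
\]
We need this to be less than $2^{n/2-1}$, that is, $8n(n-2)^2<2^{n/3}$. At $n=64$ the left side is $8\cdot64\cdot62^2\approx1.97\times10^6$, while $2^{64/3}\approx2.6\times10^6$, so the inequality holds.

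\emph{Branch $B$.} There is $H$ with $\om(H)\le\lfloor n/5\rfloor$ and $\indab(G)\le\frac{(n-2)^2(n-3)}{4}\indab(H)$. This gives
\[
 a(G)\le 2n(n-2)^2(n-3)\,2^{n/10}.
\]
The required comparison is $4n(n-2)^2(n-3)<2^{2n/5}$. At $n=64$ this reads roughly $6.0\times10^7$ against $2^{25.6}\approx5.1\times10^7$, which fails by a small margin. So branch $B$ is the main obstacle at the bottom of the range.

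To close branch $B$, I would first replace the crude $8$ from Lemma~\ref{prop:uniform-index} by the sharper tabulated values. For $n$ near $64$ we have $\om(H)\le12$, and the table gives $U_j/2^{j/2}\le507/64<8$ only on $[9,16]$, which does not help much. The real source of slack is elsewhere: $\om(H)\le\lfloor n/5\rfloor$ rather than $n/5$, and we can use the tabulated $U_{\lfloor n/5\rfloor}$ directly. For $n=64$ this is $U_{12}$, which I would compute from the recursion. At $j=12$ the max is likely $2^6=64$, or $\frac{100}{2}U_4=200$, or $\frac{100\cdot9}{4}U_2=450$. The resulting bound is $a(G)\le64\cdot\frac{62^2\cdot61}{4}\cdot450\approx1.7\times10^9$, which is below $2^{31}\approx2.1\times10^9$.

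This suggests the following strategy for branch $B$. Handle a finite window of even $n$ (up to where the asymptotic inequality $4n(n-2)^2(n-3)<2^{2n/5}$ takes over, around $n\approx70$) by explicit values of $U_{\lfloor n/5\rfloor}$, and also of $U_{\lfloor n/3\rfloor}$ if needed. Handle all larger $n$ by monotonicity of the ratio, via the same logarithmic-derivative argument as in Lemma~\ref{prop:uniform-index}. These effective comparisons would be deferred to Appendix~\ref{app:numerics}.
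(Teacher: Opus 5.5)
Your proposal is correct and follows the paper's proof: one step of Proposition~\ref{prop:descent}, the uniform bound of Lemma~\ref{prop:uniform-index} applied to $H$, and $a(G)\le n\,\indab(G)$, followed by a comparison with $L_n=2^{n/2-1}$. The only difference is at $n=64$ in branch $B$, where the paper keeps the floor in the exponent, using $8\cdot2^{\lfloor n/5\rfloor/2}=2^9$; this already gives the ratio $58621/65536<1$, after which the ratio decreases in steps of two, so your detour through $U_{12}=450$ is valid but unnecessary.
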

\begin{proof}
By Proposition~\ref{prop:descent}, Lemma~\ref{prop:uniform-index},
and $a(G)\le n\indab(G)$, we have either $a(G)\le n(n-1)$,
or
\begin{equation}\label{eq:two-descent-estimates}
 a(G)\le
 \begin{cases}
 4n(n-2)^2\,2^{\lfloor n/3\rfloor/2},&\text{branch $A$},\\
 2n(n-2)^2(n-3)\,2^{\lfloor n/5\rfloor/2},&\text{branch $B$}.
 \end{cases}
\end{equation}
Divide by $L_n=2^{n/2-1}$, where $n\ge64$ is even. At $n=64$ the two ratios in
\eqref{eq:two-descent-estimates} are, respectively,
\[
 \frac{961\sqrt2}{2048}<1,
 \qquad \frac{58621}{65536}<1.
\]
Each ratio decreases as $n$ increases by two. Indeed the floor
in either exponent increases by at most one, while $L_n$
doubles. The polynomial growth ratio is bounded by
\[
 \frac{n+2}{n}\left(\frac n{n-2}\right)^2
 \frac{n-1}{n-3}
 \le\frac{33}{32}\left(\frac{32}{31}\right)^2\frac{63}{61}
 <\frac65<\sqrt2.
\]
Therefore the exponential decrease dominates the polynomial
growth. Finally, $n(n-1)<L_n$ at $n=64$ and the ratio decreases
for larger even $n$. This proves the strict inequality.
\end{proof}

\subsection{Removing the odd factors}

A finite class-two group splits into its Sylow subgroups.
The prime-wise index bounds exclude nonabelian odd factors at
the effective threshold.

\begin{lemma}[Separation of odd Sylow factors]\label{prop:odd-gap}
If a finite class-two group $G$ has a nonabelian odd-order Sylow
subgroup and $\om(G)\le n$ with $n\ge64$, then $a(G)<L_n$.
\end{lemma}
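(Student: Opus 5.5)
We combine the covering inequality $a(G)\le\om(G)\,\indab(G)$ from Lemma~\ref{lem:covergroup} with the second assertion of Corollary~\ref{cor:class2}, or a direct prime-wise version of it. A finite class-two group is nilpotent, so write $G=P_2\times P_{\mathrm{odd}}$, where $P_2$ is the Sylow $2$-subgroup and $P_{\mathrm{odd}}$ the product of the odd Sylow subgroups. By hypothesis some odd Sylow factor is nonabelian.

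\emph{Case 1: at least two Sylow subgroups are nonabelian.} Corollary~\ref{cor:class2} gives $\indab(G)\le2^{7n/24}$. Hence $a(G)\le n\,2^{7n/24}$, and it suffices to check $n\,2^{7n/24}<2^{n/2-1}$, that is, $2n<2^{5n/24}$, for even $n\ge64$. At $n=64$ the left side is $128=2^7$ and the right side is $2^{40/3}$, so the inequality holds. The ratio $2n/2^{5n/24}$ decreases in $n$ because $\log_2(n)$ grows more slowly than $5n/24$ there.

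\emph{Case 2: exactly one Sylow subgroup $P$, for an odd prime $p$, is nonabelian.} Here $G=P\times A$ with $A$ abelian. By the remarks on abelian direct factors, $\om(G)=\om(P)$, $a(G)=a(P)$ and $\indab(G)=\indab(P)$. Proposition~\ref{prop:isotropic-index} then gives $\log_2\indab(P)=\tau\log_2p$ with $\om(P)\ge c_p\tau$, where $c_3=10/3$ and $c_p=p$ for $p\ge5$. Thus
\[
 \log_2\indab(P)\le\frac{\log_2p}{c_p}\,n,
\]
with coefficient $\tfrac{3}{10}\log_23\approx0.4755$ for $p=3$ and at most $\log_25/5\approx0.464$ for $p\ge5$. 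Both coefficients are strictly below $1/2$. The requirement becomes $n\,2^{\kappa n}<2^{n/2-1}$ with $\kappa\le0.4755$, i.e. $2n<2^{(1/2-\kappa)n}$, where $1/2-\kappa\approx0.0245$. At $n=64$ this says $128<2^{1.57}$, which is false. The main obstacle is therefore that the crude bound $a\le\om\cdot2^{\tau}$ is too weak at the threshold $n=64$ for $p=3$.

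To overcome this, I would use the sharper covering bound \eqref{eq:average} of Lemma~\ref{lem:scalarisation} instead of the factor $n$. Alternatively, I would bound $a(P)$ directly, replacing $n$ by a lower-order term. Concretely, I would run the same induction as in Proposition~\ref{prop:isotropic-index}, but for $a(\beta)$. In the nondegenerate scalar case, $a(\beta)=p^s+1$ while $n\ge 3s+1$; this gives $a\le 3^{(n-1)/3}+1$, whose exponent $\tfrac{\log_23}{3}\approx0.528$ per unit of $n$ exceeds $1/2$. So the scalar ternary case actually must use the ternary clique bound $10s/3$ of Lemma~\ref{lem:scalar}, which gives $a\le 3^{3n/10}+1=2^{0.4755n}+1<L_n$ comfortably at $n=64$, since $2^{30.4}\ll2^{31}$. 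For the degenerate cases I would apply \eqref{eq:average}, namely $a(\beta)\le a(\beta|_R)+p^tBn$, where $B\approx p^{2s}/(ps)$, and then induct. The inductive hypothesis to aim for is $a(\beta)\le 2^{\kappa' n}$ with a small additive slack for a constant $\kappa'<1/2$, checked numerically in the spirit of Appendix~\ref{app:numerics}. The reason this should work is that the coset clique counts $N_R\ge2t$ (or $pt/2$) in the proof of Proposition~\ref{prop:isotropic-index} leave a strict surplus over $c_p\tau$, and that surplus absorbs the polynomial factor $n$ once $n\ge64$. This bookkeeping, i.e. verifying that the margin beats the factor $Bn$ uniformly in $s$ and $t$, is the step I expect to be hardest.
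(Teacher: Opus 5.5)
\textbf{Verdict: Case~1 is fine; Case~2 is left as a plan rather than proved.} Your Case~1 (two or more nonabelian Sylow factors) is correct and is exactly the paper's argument: $a(G)\le n2^{7n/24}$ by Corollary~\ref{cor:class2}, and $2n2^{-5n/24}<1$. Case~2 is the substance of the lemma, and there you give a programme, not a proof. Your diagnosis is right: the crude bound $a\le n\,p^{\tau}$ with $\om\ge c_p\tau$ fails at $n=64$. You are also right that the nondegenerate scalar case must use the exact value $p^s+1$, which carries no factor $n$, together with the clique bound $10s/3$ (and $ps+1$ for $p\ge5$). However, the degenerate case is never carried out. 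The case where every nonzero scalarization is nondegenerate but the map is nonscalar ($\dim W\ge2$) is not mentioned at all.

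\textbf{The missing idea.} No induction on $a(\beta)$ is needed. The surplus to exploit is that a degenerate scalarization forces clique growth that is \emph{multiplicative} in $s$ and $t$, not merely the additive surplus over $c_p\tau$. Set $R=\rad(\lambda\beta)$, $[V:R]=p^{2s}$ and $t=\tau(\beta|_R)$.
\begin{itemize}
\item Bound the restriction crudely by $a(\beta|_R)\le np^t$, from \eqref{eq:budget}. Then \eqref{eq:average} gives $a(\beta)\le np^t(1+B)$ with $B=(p^{2s}-1)/((p-1)(ps+1))$.
\item The proof of Proposition~\ref{prop:isotropic-index} supplies $n\ge p^2s+1$ for $t=0$, where in fact $a\le 1+nB$. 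It gives $n\ge p(ps+1)$ for $t=1$ and $n\ge 2t(ps+1)$ for $t\ge2$. The last yields $t+2s-2\le ts\le n/(2p)$.
\item Hence $a(\beta)$ is at most $1+n3^{2n/9}$, $n3^{2n/9}$, or $\tfrac94 n3^{n/6}$. The exponents are about $0.35n$ and $0.26n$, far below $n/2$, so the factor $n$ is harmless already at $n=64$.
\item In the nondegenerate nonscalar case, Lemma~\ref{lem:scalarisation} gives $n\ge p^2(s-1)+1$ and $a(\beta)\le (p^{2s}-1)/(p-1)\le n3^{2n/9}$.
\end{itemize}
Only the scalar case has merely linear clique growth, and there no factor $n$ appears.

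\textbf{Why your proposed induction does not help.} An induction with hypothesis $a\le 2^{\kappa' n}$ is unnecessary, and as formulated it does not obviously close. Feeding $a(\beta|_R)\le 2^{\kappa'n}$ into \eqref{eq:average} already overshoots $2^{\kappa' n}$ by $p^tBn$, which is exponentially large in $s$. An additive slack cannot absorb this unless you also show that $\om(\beta|_R)$ is substantially smaller than $n$, and you have not done that.
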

\begin{proof}
Suppose first that $G$ has exactly one nonabelian Sylow factor $P$.
Let $\beta$ be its reduced commutator map, so
$a(G)=a(\beta)$ and $\om(\beta)\le n$.
If at least one nonzero scalarization is degenerate, choose such a
$g=\lambda\beta$. Its radical $R$ is nonzero; write
$[V:R]=p^{2s}$ and $t=\tau(\beta|_R)$, and put
\[
 d=(p-1)(ps+1),\qquad B=\frac{p^{2s}-1}{d}.
\]
For every odd prime $p\ge3$ and $s\ge1$,
\begin{equation}\label{eq:odd-algebra}
 d-1\le p^{2s},\quad d\ge2p,\quad
 1+B\le\frac{2p^{2s}}{d},\quad
 \frac{2p^2}{d}\le\frac94.
\end{equation}
Indeed, $d-1\le p^{2s}$ holds at $s=1$, where
$d=p^2-1$, and persists by induction since
$d(s+1)-d(s)=p(p-1)\le(p^2-1)p^{2s}$;
the other bounds follow from $d\ge p^2-1$.
The covering inequality \eqref{eq:average} and
$a(\beta|_R)\le np^t$ give
\begin{equation}\label{eq:odd-cover}
 a(\beta)\le np^t(1+B).
\end{equation}

\emph{Degenerate scalarization: three radical cases.}
We distinguish $t=0$, $t=1$ and $t\ge2$.
If $t=0$, then $R$ is isotropic, and in fact
$a(\beta)\le1+nB\le1+np^{2s}$. By
Lemma~\ref{lem:scalarisation}, $n\ge p^2s+1$.
Since $\log p/p^2\le\log3/9$ for every prime $p\ge3$,
\begin{equation}\label{eq:odd-t0}
 a(\beta)\le1+n3^{2n/9}.
\end{equation}
If $t=1$, the coset cliques in the proof of
Proposition~\ref{prop:isotropic-index} give
$n\ge p(ps+1)\ge p^2s+1$. Now \eqref{eq:odd-algebra} implies
$p(1+B)\le p^{2s}$, so \eqref{eq:odd-cover} gives
\begin{equation}\label{eq:odd-t1}
 a(\beta)\le np^{2s}\le n3^{2n/9}.
\end{equation}
If $t\ge2$, the same proposition provides
$N_R(v)\ge2t$ for every coset in a scalar clique of
$ps+1$ cosets (also when $p=3,t=2$). Thus
$n\ge2t(ps+1)$, so
\[
 \frac n{2p}\ge ts+\frac tp,\qquad
 t+2s-2\le ts,
\]
where the latter inequality is equivalent to
$(t-2)(s-1)\ge0$. Using \eqref{eq:odd-cover} and
\eqref{eq:odd-algebra}, we obtain
\begin{align}
 a(\beta)
 &\le \frac{2np^2}{d}\,p^{t+2s-2}
 \le\frac94 n p^{n/(2p)}
 \le\frac94 n3^{n/6}.\label{eq:odd-tlarge}
\end{align}
The last step again uses that $\log p/p$ decreases for $p\ge3$.

\emph{Nondegenerate scalarizations.}
In the complementary case, every nonzero scalarization is
nondegenerate, so Lemma~\ref{lem:scalarisation} applies. For a nonscalar map, it gives
$s\ge2$, $n\ge p^2(s-1)+1$ and
\[
 a(\beta)\le\frac{p^{2s}-1}{p-1}
 \le\frac{p^2}{p-1}p^{2(s-1)}
 \le n3^{2n/9}.
\]
Here $p^2/(p-1)\le n$ follows already from
$n\ge p^2(s-1)+1\ge p^2+1$ because $s\ge2$.
If the map is scalar, the exact covering number is $p^s+1$.
For $p=3$, Lemma~\ref{lem:scalar} with $s=4q+r$, $0\le r\le3$,
gives
$n\ge15q+3r+1\ge(15s-5)/4$.
We claim that for $n\ge25$,
\begin{equation}\label{eq:odd-scalar}
 p^s\le2^{7(n-1)/15}.
\end{equation}
For $p=3$ and $s\le6$, the inequalities $3^5<2^8$ and
$n\ge25$ give $s\log_2 3<48/5<7(n-1)/15$.
For $p=3$ and $s\ge7$, they give
\[
 s\log_2 3<\frac85s
 \le\frac74s-\frac{21}{20}
 \le\frac7{15}(n-1).
\]
For $p\ge5$, the monotonicity of $\log p/p$ and
$5^3<2^7$ imply
$s\log_2p\le(7/15)ps\le(7/15)(n-1)$,
since $n\ge ps+1$. This proves \eqref{eq:odd-scalar}.

\emph{Comparison with the extremal scale.}
For $n\ge64$, normalize the bounds above by
$L_n\ge2^{n/2-1}$. The cases $t=0$, $t=1$, $t\ge2$,
the nonscalar nondegenerate case, and the scalar case yield,
respectively, the upper bounds
\[
 2^{1-n/2}+2n2^{-13n/90},\quad
 2n2^{-13n/90},\quad
 (9/2)n2^{-7n/30},\quad
 2n2^{-13n/90},\quad
 2^{1-n/2}\bigl(2^{7(n-1)/15}+1\bigr).
\]
Every one is strictly below one by
Lemma~\ref{lem:separation-numerics}.
If $G$ has at least two nonabelian Sylow factors, then
$a(G)\le n2^{7n/24}$; its normalized ratio is bounded by
$2n2^{-5n/24}<1$ by the same lemma.
This establishes the effective assertion.
\end{proof}

\begin{proof}[Proof of Proposition~\ref{prop:binary-reduction}]
Lemma~\ref{prop:nonclass2-gap} treats groups outside class two.
For class-two groups, Lemma~\ref{prop:odd-gap} excludes all
nonabelian odd Sylow factors. The remaining group has the
stated decomposition $G=P\times A$.
\end{proof}

Abelian direct factors preserve $a$ and $\om$, so only the
reduced commutator map of the class-two $2$-group remains.

\section{Separation of nonscalar alternating maps}\label{sec:parity}

After the reduction, it remains to distinguish the scalar
symplectic model from the other alternating maps on finite abelian
$2$-groups.

\begin{proposition}[Binary equality case]\label{parity:prop:binary}
For every integer $n\ge64$, every alternating map between
finite abelian $2$-groups with clique number at most $n$ satisfies
$a(\beta)\le L_n+1$.
Every reduced nonzero nonscalar map satisfies $a(\beta)<L_n$.
Equality holds precisely when the reduced map has target $\F_2$ and
domain $\F_2^{2\lfloor(n-1)/2\rfloor}$.
\end{proposition}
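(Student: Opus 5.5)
We reduce first: passing to $V/\rad\beta$ and to the subgroup generated by the values preserves $a$, $\om$ and $\tau$, so we may assume $\beta$ reduced. The zero map has $a=1$. If $\beta$ is scalar (target $\F_2$, so $V\cong\F_2^{2s}$ with nondegenerate form), the computation following Lemma~\ref{lem:extraspecial} gives $a(\beta)=2^s+1$ and $\om(\beta)=2s+1\le n$. Hence $s\le\lfloor(n-1)/2\rfloor=m$ and $a(\beta)\le2^m+1=L_n+1$, with equality exactly when $s=m$. It therefore suffices to prove $a(\beta)<L_n$ for every reduced nonscalar $\beta$. This gives the bound, and the characterization of equality follows because every other map has $a<L_n$.

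For nonscalar $\beta$ we follow the template of Lemma~\ref{prop:odd-gap} with $p=2$. There are two cases.

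\emph{Nondegenerate case.} If every nonzero scalarization is nondegenerate and $\dim W\ge2$, Lemma~\ref{lem:scalarisation} gives $s\ge2$, $n\ge4(s-1)+1$ and $a(\beta)\le2^{2s}-1$. This comparison fails: $2^{2s}$ is about $2^{n/2+2}$, which exceeds $L_n$. So here we need a sharper cover. The plan is to exploit that $W$ has dimension at least two. Fix a scalarization $\lambda$ and its symplectic form $g$, cover $V$ by the $2^s+1$ Lagrangians of a $g$-spread, and partition each Lagrangian $A$ into $\beta$-isotropic pieces. Since $\beta|_A$ takes values in $\ker\lambda$, a further scalarization of $\beta|_A$ controls how many pieces are needed. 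Alternatively, we use the clique bound $n\ge4s-3$ more cleverly. Since $\dim W\ge2$, we expect the clique number to be much larger, of order close to $8s$, by combining two independent scalarizations through Lemma~\ref{lem:scalar}. That would give $a(\beta)\le2^{2s}\ll2^{n/2}$.

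\emph{Degenerate case.} If some nonzero scalarization $g=\lambda\beta$ is degenerate, set $R=\rad g\neq0$, $[V:R]=2^{2s}$ and $t=\tau(\beta|_R)$. Then \eqref{eq:average} together with $a(\beta|_R)\le n2^t$ gives $a(\beta)\le n2^t(1+B)$ with $B=(2^{2s}-1)/(2s+1)$. We split into $t=0$, $t=1$ and $t\ge2$, using the coset clique bounds from the proof of Proposition~\ref{prop:isotropic-index}:
\begin{itemize}
\item[] $t=0$: $n\ge4s+1$;
\item[] $t=1$: $n\ge3(2s+1)$;
\item[] $t\ge2$: $n\ge(t+1)(2s+1)$.
\end{itemize}
In each case $\log_2 a(\beta)$ is at most about $2s+t-\log_2(2s+1)+\log_2 n$, while $n/2$ exceeds $2s+t$ by a linear amount in $s$ or $t$. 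The exceptions are small $s$ with large $t$, or $s=1$ and $t=0$.

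We expect the \textbf{main obstacle} to be the regime $s=1$ with $t$ large, where $n\approx3(t+1)$. The greedy bound $N_R\ge t+1$ is too weak there, so we would instead induct on $|V|$. We apply the statement itself (or Proposition~\ref{prop:isotropic-index}) to $\beta|_R$ with $\om(\beta|_R)\le n-2$. The plane $V/R$ carries a clique of three cosets, and each coset meeting $R$ nontrivially amplifies the clique. We aim for $\om(\beta|_R)\le(n-1)/3+O(1)$, which gives $a(\beta)\le3\cdot a$-bound$(\beta|_R)\cdot O(1)\ll2^{n/2}$. The numerical comparisons at $n\ge64$ would then be checked as in Appendix~\ref{app:numerics}.
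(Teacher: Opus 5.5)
Your reduction, your scalar computation and your characterization of equality are correct and match the paper. So are your degenerate cases with $t\ge1$. The regime you single out as the main obstacle is in fact harmless. For $t=1$ one has $n\ge3(2s+1)$, and for $t\ge2$ the identity $(t+1)(2s+1)+3-3(2s+t)=2(s-1)(t-2)\ge0$ gives $2s+t\le n/3+1$. Either way $a(\beta)\le n2^{2s+t}\le2n2^{n/3}$, which is below $L_n$ for $n\ge64$ (Lemma~\ref{parity:lem:noniso}); no induction on $\beta|_R$ is needed.

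\textbf{First gap: the isotropic radical $t=0$.} You dismiss this case incorrectly. Here $n\ge4s+1$ gives only $n/2-2s\ge1/2$, not a linear surplus. Your bound $1+n(4^s-1)/(2s+1)$ exceeds $L_n$ for every $4s+1\le n\le4s+4$, and at $n\in\{4s+1,4s+2\}$ it is about $2\cdot4^s=2L_n$. Closing this factor of two is the whole content of Section~\ref{sec:parity}. The paper proceeds as follows.
\begin{itemize}
\item Targets of exponent at least four give $n\ge5s+1$ (Lemma~\ref{parity:lem:exponent}).
\item Otherwise one studies the interaction maps $T_u:R\to W$, $T_u(r)=\beta(v,r)$, whose coset clique numbers are exactly $2^{\rank T_u}$. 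If some $T_u$ has rank at least two, Reed--Muller averaging gives $n\ge\frac94(2s+1)$.
\item Otherwise the $T_u$ span a space of rank-one maps. They then share a kernel hyperplane, which forces $\dim R=1$ and a small domain, or they share an image line.
\item The image-line case needs symplectic peeling, $n\ge4s+1+\max\{s-c,0\}$, where $c$ is the kernel dimension of the interaction map $V/R\to R^*$. This is combined with the fibre cover $a(\beta)\le2\cdot4^s-2^c$ and its Ramsey refinement.
\end{itemize}
Your plan anticipates none of this structure.

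\textbf{Second gap: the nondegenerate nonscalar case.} You leave this case open. Neither the spread-refinement idea nor the hoped-for clique bound of order $8s$ is carried out. The latter does not follow from $\dim W\ge2$: take $\F_4^2$ with its $\F_4$-valued determinant, viewed over $\F_2$. All its scalarizations are nondegenerate, $s=2$, and its clique number is $5=4s-3$, so the bound of Lemma~\ref{lem:scalarisation} can be sharp.

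The missing observation is that no stronger clique bound is needed. From $n\ge4s-3$ one gets $|V|=4^s\le4^{\lfloor(n+3)/4\rfloor}\le4L_n$, so it suffices to beat the trivial cover by a constant factor. Lemma~\ref{parity:lem:ramsey} does exactly this. Independent sets of the nonorthogonality graph span isotropic subgroups, and the Erd\H{o}s--Szekeres bound with $k=6$ gives
$a(\beta)\le|V|/6+\binom{n+5}{5}\le\frac23L_n+\binom{n+5}{5}<L_n$ for $n\ge64$. The same device also disposes of the common-kernel subcase above.
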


We prove the binary estimate for $n\ge64$; the odd bounds
in Theorem~\ref{thm:parity} follow by monotonicity.

The key is the radical $R$ of a scalarization. We first rule out
nonisotropic radicals and targets of exponent at least four.
When the target is elementary abelian, the interaction between
$R$ and $V/R$ either increases the clique number or has rank
at most one. The last possibility, a common image line, is resolved by
symplectic peeling followed by an elementary fibre cover.

Throughout the section, let $\om(\beta)\le n$ and reduce
$\beta$ by its radical, replacing its target by the subgroup
generated by its values. A nonzero reduced map is \emph{scalar}
if its target is $\F_2$; otherwise it is \emph{nonscalar}.
We retain the notation of Section~\ref{sec:class2}. Then
\begin{equation}\label{parity:eq:exponents}
 \exp V=\exp W:
\end{equation}
biadditivity gives one divisibility and $(\exp W)V\le\rad\beta=0$
gives the other. The zero map has $a(\beta)=1$.

\subsection{Scalarization}

For a nonzero scalarization put $R=\rad(\lambda\beta)$,
$[V:R]=2^{2s}$ and $t=\tau(\beta|_R)$.
By \eqref{eq:budget}, $a(\beta)\le n2^{2s+t}$.
The proof of Proposition~\ref{prop:isotropic-index} gives
$n\ge3(2s+1)$ for $t=1$ and $n\ge(t+1)(2s+1)$ for $t\ge2$.
If $R\ne0$ is isotropic, Lemma~\ref{lem:scalarisation} gives
\begin{equation}\label{parity:eq:iso-input}
 n\ge4s+1,\qquad
 a(\beta)\le1+\frac{n(4^s-1)}{2s+1}.
\end{equation}

\begin{lemma}[A nonisotropic scalar radical]\label{parity:lem:noniso}
If some nonzero scalarization has a nonisotropic radical, then
\[
 a(\beta)\le2n2^{n/3}.
\]
\end{lemma}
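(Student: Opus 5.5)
We bound $a(\beta)$ through the budget \eqref{eq:budget}: for the chosen scalarization with radical $R$, $[V:R]=2^{2s}$ and $t=\tau(\beta|_R)$, we have $a(\beta)\le n2^{2s+t}$. It therefore suffices to show
\[
 2s+t\le 1+n/3,
\]
using the clique lower bounds extracted from the proof of Proposition~\ref{prop:isotropic-index}.

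Since $R$ is nonisotropic, we have $\beta|_R\ne0$ and hence $t\ge1$. The plan is to split into the two regimes already recorded before the statement.

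\emph{The case $t=1$.} The exceptional binary subcase in the proof of Proposition~\ref{prop:isotropic-index} gives a nonorthogonal triple in every coset of $R$. Concatenating over a scalar clique of $2s+1$ cosets (Lemma~\ref{lem:scalar}) gives $n\ge3(2s+1)$. Then $2s+1\le n/3$, so $2s+t=2s+1\le n/3$, and the bound $a(\beta)\le n2^{n/3}$ follows, which is stronger than required.

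\emph{The case $t\ge2$.} The greedy bound \eqref{eq:greedy} with $j=\lfloor\log_2t\rfloor+1$ gives $N_R(v)\ge t+1$ in every coset, hence $n\ge(t+1)(2s+1)$. We must check that
\[
 3(2s+t)\le(t+1)(2s+1)+3,
\]
that is, $6s+3t\le 2st+2s+t+4$, or equivalently $0\le 2st-4s-2t+4=2(s-1)(t-2)$. This holds for $s\ge1$ and $t\ge2$. Thus $2s+t\le n/3+1$, and so $a(\beta)\le n2^{2s+t}\le 2n2^{n/3}$.

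The main point needing care is that the greedy bound \eqref{eq:greedy} from Proposition~\ref{prop:isotropic-index} is indeed valid for an arbitrary nonzero scalarization, not only the one selected in that proof. Its derivation used only $R=\rad g$, the definition of $t$, and induction applied to subgroups $K\le R$ via the proposition itself, so it transfers verbatim. Similarly, the $t=1$ triple argument used only $\beta|_R\ne0$. We also need $s\ge1$, which holds because $g\ne0$. No reduction of $\beta$ is needed beyond the standing one, since the invariants are radical-invariant.
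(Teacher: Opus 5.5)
Your proposal is correct and follows the paper's proof essentially line for line. It uses the budget bound $a(\beta)\le n2^{2s+t}$, the clique bounds $n\ge3(2s+1)$ for $t=1$ and $n\ge(t+1)(2s+1)$ for $t\ge2$ from the proof of Proposition~\ref{prop:isotropic-index}, and the identity $(t+1)(2s+1)+3-3(2s+t)=2(s-1)(t-2)$. Your extra check that these clique bounds hold for an arbitrary nonzero scalarization is sound and makes explicit what the paper takes for granted: it needs only $R=\rad g\ne V$, $\beta|_R\ne0$, and the already-proved proposition applied to $\beta|_K$.
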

\begin{proof}
A nonisotropic radical has $t\ge1$.
For $t=1$, the bound $n\ge6s+3$ gives $2s+t\le n/3$.
For $t\ge2$,
\[
 (t+1)(2s+1)+3-3(2s+t)=2(s-1)(t-2)\ge0.
\]
Thus $2s+t\le n/3+1$, and the covering bound applies.
\end{proof}

The next lemma provides a different mechanism. If the domain
is not too large, a graph-colouring bound makes its isotropic
cover small without requiring a detailed analysis of the map.

\begin{lemma}[Colouring by independent sets]
\label{parity:lem:ramsey}
A graph with $N$ vertices and clique number at most $m$ has,
for every integer $k\ge2$,
\[
 \chi\le\frac Nk+\binom{m+k-1}{k-1}.
\]
The same bound holds for the isotropic covering number of an
alternating map with $N$ vectors and clique number at most $m$.
\end{lemma}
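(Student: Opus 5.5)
The plan is a greedy colouring argument driven by the Ramsey bound $R(m+1,k)\le\binom{m+k-1}{k-1}$ from the Erd\H{o}s--Szekeres recursion. Fix $k\ge2$ and repeat the following step. As long as the uncoloured vertex set $U$ has more than $\binom{m+k-1}{k-1}-1$ vertices, the induced subgraph on $U$ contains either a clique of size $m+1$ or an independent set of size $k$. The first is impossible because the clique number is at most $m$. So we can extract an independent set of size exactly $k$, give it a new colour, and delete it from $U$.

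Each such step removes $k$ vertices, so the number of steps is at most $N/k$. When the process stops, fewer than $\binom{m+k-1}{k-1}$ vertices remain, and we give each one its own colour. The total is at most
\[
\frac Nk+\binom{m+k-1}{k-1},
\]
which is the claimed bound on $\chi$. The Ramsey step itself is the classical induction $R(a,b)\le R(a-1,b)+R(a,b-1)$ with $R(a,1)=R(1,b)=1$. It gives $R(m+1,k)\le\binom{m+k-1}{k-1}$, i.e.\ every graph on at least $\binom{m+k-1}{k-1}$ vertices contains a clique of size $m+1$ or an independent set of size $k$. I will state this in a line, since it is standard.

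For the alternating-map version, use the orthogonality graph on the $N$ vectors of $V$: two distinct vectors are adjacent when they are nonorthogonal. Its clique number is $\om(\beta)\le m$. The independent sets are pairwise orthogonal sets $S$. The key observation is that such a set spans an isotropic subgroup, since $\beta(\langle S\rangle,\langle S\rangle)$ is generated by the pairings $\beta(x,y)$ with $x,y\in S$. Here biadditivity is used, together with $\beta(x,x)=0$ for the diagonal pairs. Replacing each colour class by the subgroup it generates therefore turns a proper colouring into an isotropic cover of the same size. Hence the isotropic covering number is at most $\chi$, and the bound transfers.

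I expect no real obstacle. The only point needing care is that for alternating maps an independent set generates an isotropic subgroup, which is exactly the biadditivity remark above. One could also drop the zero vector, or any radical vectors, from the count, but this is not required for the inequality.
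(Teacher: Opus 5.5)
Your proposal is correct and matches the paper's proof: you remove independent $k$-sets greedily using the Erd\H{o}s--Szekeres bound $R(m+1,k)\le\binom{m+k-1}{k-1}$, give each of the fewer than $R(m+1,k)$ leftover vertices its own colour, and pass to isotropic covers because pairwise orthogonal sets generate isotropic subgroups. Your explicit justification of that last point via biadditivity and $\beta(x,x)=0$ is exactly what the paper leaves implicit.
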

\begin{proof}
The Erd\H{o}s--Szekeres Ramsey recurrence
\cite{ErdosSzekeres1935} gives
$R(m+1,k)\le\binom{m+k-1}{k-1}$.
Repeatedly remove independent $k$-sets, each using one colour;
colour the fewer than $R(m+1,k)$ remaining vertices individually.
In a nonorthogonality graph, independent sets generate isotropic
subgroups, proving the last assertion.
\end{proof}

Write $Q_k(n)=\binom{n+k-1}{k-1}$.
The remaining obstruction to binary linear algebra is a target
of exponent at least four. Such an exponent forces additional
nonorthogonal vectors.

\begin{lemma}[Values of exponent at least four]\label{parity:lem:exponent}
If $W$ is not elementary abelian, some nonzero scalarization has
nonzero radical $R$ and rank $2s$ with $n\ge5s+1$.
\end{lemma}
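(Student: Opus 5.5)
The plan is to construct one scalarization $g=\lambda\beta$ adapted to an element of maximal order in $W$, and then count a clique across the cosets of $R=\rad g$, in the spirit of the isotropic-radical gain in Lemma~\ref{lem:scalarisation}.

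\emph{Choice of $\lambda$ and nonzero radical.} We may assume $\beta$ is reduced. Since the values of $\beta$ generate $W$, the exponent of $W$ is the maximal order of a value. Pick a value $w=\beta(x,y)$ of maximal order $2^e\ge4$. Such a $w$ is not in $2W$: if $w=2u$, then $u$ would have order $2^{e+1}$. Hence there is a homomorphism $\lambda:W\to\F_2$ with $\lambda(w)=1$, so $g=\lambda\beta\ne0$. By \eqref{parity:eq:exponents}, $\exp V=\exp W\ge4$, so $2V\ne0$. Since $2V\le\rad g=R$, the radical is nonzero. Write $[V:R]=2^{2s}$. The element $2x\in R$ satisfies $\beta(2x,y)=2w\ne0$, so $\beta(R,V)\ne0$. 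Also $2w$ has order $2^{e-1}\ge2$, so there is a functional $\mu$ on $W$ with $\mu(2w)\ne0$. Then $v\mapsto\mu\beta(2x,v)=\mu\beta(x,2v)$ factors through $V/2V$.

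\emph{Baseline count.} I would first recover $n\ge4s+1$ as in Lemma~\ref{lem:scalarisation}. Take a hyperplane $H$ of $V/R$ containing the image of $C_V(2x)$; I have to check that this image is proper, using $\beta(2x,y)\ne0$. Apply Lemma~\ref{lem:scalar} to obtain a $g$-clique of $2s+1$ cosets, exactly one of them inside $H$. For a coset $v+R$ outside $H$, the pair $\{v,v+2x\}$ is nonorthogonal, because $\beta(v,v+2x)=\beta(2x,v)\ne0$ up to sign. Concatenation across $g$-nonorthogonal cosets gives $2\cdot2s+1$ elements.

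\emph{Extra $s$ (expected main obstacle).} I would use the plane-by-plane structure of the clique in Lemma~\ref{lem:scalar}. Each symplectic plane contributes two new cosets, and I aim to show that within each plane one of these cosets carries a nonorthogonal triple. That gives $3+2=5$ elements per plane, hence $5s+1$ in total. The first candidate triple is $\{v,\,v+2x,\,v+2x'\}$, or $\{v,\,v+2x,\,3v+2x\}$, with $x'$ chosen in the same plane. The pairings involve $2\beta(v,\cdot)$ and $4\beta(x,x')$, and the point is to use that $w$ has order $\ge4$ to keep all three nonzero.

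If a triple cannot be forced in every plane, the fallback is a dichotomy on the restriction $\beta|_R$:
\begin{itemize}
\item If $\tau(\beta|_R)\ge1$, the coset bounds \eqref{eq:greedy} in the proof of Proposition~\ref{prop:isotropic-index} already give $N_R(v)\ge3$ in every coset, and so $n\ge6s+3$.
\item If $R$ is isotropic, the extra element has to come from the order-four interaction $\beta(2x,\cdot)$ versus $\beta(x,\cdot)$ described above.
\end{itemize}
Carrying out this per-plane gain cleanly is the step I expect to require the most care.
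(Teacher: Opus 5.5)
\textbf{There is a genuine gap: the per-plane triple you need does not exist, and no coset-by-coset concatenation can reach $5s+1$.} Your preliminary steps are sound: the choice of $\lambda$, the inclusion $0\ne2V\le R$, the baseline $n\ge4s+1$ when $R$ is isotropic, and the fallback $n\ge6s+3$ when $\tau(\beta|_R)\ge1$. The failure is in the step you flag as the main obstacle, and it is not merely delicate. Any argument that picks a clique of cosets for $\lambda\beta$ and concatenates cliques found inside individual cosets is capped below $5s+1$.

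Take $V=(\mathbb Z/4\mathbb Z)^{2s}$, $W=\mathbb Z/4\mathbb Z$, with $\beta$ the standard symplectic form. Here $\lambda$ must be reduction mod $2$, and $R=2V$ is isotropic. For two points of one coset, $\beta(v+r,v+r')=\beta(v,r'-r)\in 2W=\{0,2\}$. The three values $\beta(v,r_2-r_1),\beta(v,r_3-r_2),\beta(v,r_1-r_3)$ sum to $0$. They therefore cannot all equal $2$, since $2+2+2\equiv2\pmod4$. So every coset has clique number at most $2$. Concretely, your triple $\{v,v+2x,3v+2x\}$ has $\beta(v+2x,3v+2x)=-4\beta(v,x)=0$ when $\exp W=4$. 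A clique of cosets in $V/R\cong\F_2^{2s}$ has at most $2s+1$ members. Concatenation thus yields at most $2(2s+1)=4s+2$, which is less than $5s+1$ once $s\ge2$.

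\textbf{What is missing.} An optimal clique must contain pairs from distinct cosets that are \emph{orthogonal} for $\lambda\beta$, whose nonorthogonality is visible only in the full $\mathbb Z/2^e$-valued pairing. The paper proceeds as follows.
\begin{enumerate}
\item Project onto a cyclic summand of maximal order, $\eta:W\to\mathbb Z/2^e\mathbb Z$, and put $b=\eta\beta$ and $\lambda=\eta\bmod2$.
\item Whenever $b(x,y)$ is odd, split off the $b$-orthogonal hyperbolic plane $\langle x,y\rangle\cong(\mathbb Z/2^e\mathbb Z)^2$. Repeat until exactly $s$ planes are removed and the remainder is even-valued.
\item In each plane, the six vectors with coordinates $(1,0),(1,1),(1,2),(1,3),(0,1),(2,1)$ are pairwise $b$-nonorthogonal, since all their determinants are nonzero mod $4$.
\item The orthogonal-sum construction then gives a clique of size $6+5(s-1)=5s+1$.
\end{enumerate}
In the model above with $s=2$, take planes $\langle e_1,e_2\rangle,\langle e_3,e_4\rangle$ and glue at $e_3$. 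The resulting clique contains $e_1+e_3$ and $e_3+2e_4$. These lie in distinct cosets of $R$ that are orthogonal for $\lambda\beta$, yet $b(e_1+e_3,e_3+2e_4)=2$. This is exactly the kind of pair your framework discards.
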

\begin{proof}
Let $2^e=\exp W$, where $e\ge2$, and choose a projection
$\eta:W\to\mathbb Z/2^e\mathbb Z$ onto a cyclic direct factor of
maximum order. Put $b=\eta\beta$ and
$\lambda=\eta\pmod2$. Its composition with $\beta$ is nonzero.
By \eqref{parity:eq:exponents}, $V$ has exponent $2^e$.
If $b(x,y)$ is odd, then $\langle x,y\rangle\cong
(\mathbb Z/2^e\mathbb Z)^2$ and is nondegenerate for $b$.
Subtracting suitable multiples of $x,y$ from any vector splits off
this plane as a $b$-orthogonal direct summand. Repeating this process
splits off precisely $s$ planes, where $\rank(\lambda\beta)=2s$;
the remaining scalar form takes only even values.

The six-point construction modulo four is the $d=4$ case of
\cite[Theorem~4.1]{SarkarYoder2024}. Each plane contains such a clique,
with coefficient pairs
\[
 (1,0),(1,1),(1,2),(1,3),(0,1),(2,1).
\]
Modulo four, determinants within the first four points are nonzero
differences; those with $(0,1)$ are units, those with $(2,1)$ are
of the form $1-2t$, and the last pair has determinant $-2$.
Thus all determinants remain nonzero modulo $2^e$.
The orthogonal-sum construction in Lemma~\ref{lem:scalar} therefore gives
\[
 n\ge5s+1.
\]
The scalar radical $R$ is nonzero, since $2V\le R$ and $2V\ne0$.

\end{proof}

\subsection{From the radical to a common image}\label{sec:parity-radical}

We turn to elementary abelian targets. Equation
\eqref{parity:eq:exponents} makes $V$ elementary abelian as well,
so the argument now takes place over $\F_2$.
Fix a nonzero homomorphism $\lambda:W\to\F_2$ for which
$R=\rad(\lambda\beta)$ is nonzero and isotropic.
Let $U=V/R$, with its induced symplectic form $g$, and
write $\dim U=2s$. All spaces and maps below are over $\F_2$,
and $R^*,U^*$ denote the dual spaces.
For $u=v+R$, put $N(u)=N_R(v)$ and define
\[
 T_u:R\longrightarrow W,\qquad T_u(r)=\beta(v,r).
\]
This is well-defined, and $u\mapsto T_u$ is linear. Moreover,
\begin{equation}\label{parity:eq:coset-exact}
 N(u)=2^{\rank T_u}.
\end{equation}
Indeed, two vectors $v+r,v+r'$ are orthogonal exactly when
$T_u(r+r')=0$, so the nonorthogonality graph of the coset is complete
multipartite, with one part for each coset of $\ker T_u$.
The averaging argument of Lemma~\ref{lem:scalarisation} gives
\begin{equation}\label{parity:eq:average-input}
 n\ge\frac{2s+1}{4^s-1}\sum_{u\ne0}N(u).
\end{equation}

\begin{lemma}[An interaction of rank at least two]\label{parity:lem:ranktwo}
If some $T_u$ has rank at least two, then
\[
 n\ge\frac94(2s+1).
\]
\end{lemma}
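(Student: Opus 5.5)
The plan is to feed a lower bound for $\sum_{u\ne0}N(u)$ into the averaging inequality \eqref{parity:eq:average-input}. By \eqref{parity:eq:coset-exact}, $N(u)=2^{\rank T_u}$, so it suffices to prove
\[
 \sum_{u\ne0}2^{\rank T_u}\ \ge\ \tfrac94\,(4^s-1).
\]
I would use only two structural facts. First, $\varphi:U\to\Hom(R,W)$, $u\mapsto T_u$, is linear, and we have assumed a $u_0$ with $\rank T_{u_0}\ge2$. Second, the kernel $K=\ker\varphi=C_V(R)/R$ is a proper subspace of $U$, because $R\ne0$ is isotropic and $\rad\beta=0$. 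Writing $M=\varphi(U)$ and $d=\dim M$, the sum equals
\[
 (|K|-1)+|K|\sum_{m\in M\setminus0}2^{\rank m},\qquad |K|=2^{2s-d}.
\]
This reduces the lemma to a statement about a subspace $M$ of linear maps $R\to W$ that contains an element of rank at least two.

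\emph{Step 1 (pairing).} Partition $M\setminus\{0,T_{u_0}\}$ into pairs $\{m,m+T_{u_0}\}$. Subadditivity of rank gives $\rank m+\rank(m+T_{u_0})\ge2$, so each pair contributes at least $\min\{2+2,1+4\}=4$. Hence $\sum_{M\setminus0}2^{\rank m}\ge 2^{d+1}$, with equality only if every pair consists of two rank-one maps.

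\emph{Step 2 (rigidity).} I would sharpen Step 1 by analysing the equality configurations. If almost all elements of $M\setminus0$ had rank one, then a large subspace of $M$ would consist of maps of rank at most one. Such a subspace either has a common kernel hyperplane or a common image line. I would show that adjoining the rank-two element $T_{u_0}$ to such a subspace forces a positive proportion of the coset $T_{u_0}+M_0$ to have rank at least two. Concretely, for a common image line $\ell$, $\rank(T_{u_0}+m)\ge1$ and the rank-one condition is a codimension condition on $m$; similarly for a common kernel. This should push the average of $2^{\rank m}$ over $M\setminus0$ from $2$ up to about $9/4$. I would also exploit $K$: elements of $K$ contribute $1$, but $|K|\le 2^{2s-d}$ with $d\ge2$ is small, since $T_{u_0}$ has rank at least two and hence $M$ cannot be a line of rank-one maps.

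\emph{Step 3 (fallback).} If Step 2 does not reach the constant $9/4$ by pure averaging, I would replace averaging by an explicit clique of cosets, as in the isotropic-radical part of Lemma~\ref{lem:scalarisation}. Lemma~\ref{lem:scalar} gives a clique of $2s+1$ cosets with exactly one in a prescribed hyperplane $H\supseteq K$. The $2s$ cosets outside $H$ then each contribute at least $2$, and I would use the symplectic transitivity in the construction of Lemma~\ref{lem:scalar} to place several clique members in directions where $\rank T_u\ge2$, each contributing $4$.

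I expect the main obstacle to be Step 2. The pairing bound alone yields only about $2(2s+1)$, whereas the target is $\tfrac94(2s+1)$, and the gap must come from a genuine structural statement about subspaces of $\Hom(R,W)$ that are not of rank-one type. Step 1 and the reduction via \eqref{parity:eq:average-input} are routine.
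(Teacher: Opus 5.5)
\textbf{Gap: the lower bound on the rank-$\ge 2$ proportion is never proved.}

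Your reduction is correct and is the same as the paper's. Put $M=\{T_u:u\in U\}$ and $d=\dim M$. Linearity of $u\mapsto T_u$ pushes the uniform distribution on $U$ forward to the uniform distribution on $M$. By \eqref{parity:eq:coset-exact} and \eqref{parity:eq:average-input}, it then suffices to show that the mean of $2^{\rank m}$ over $m\in M$ is at least $9/4$; removing $u=0$ only raises the mean.

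Your Step~1 gives only $\sum_{m\in M}2^{\rank m}\ge2^{d+1}+1$. This is below $\frac94\,2^d$ as soon as $d\ge3$, and Steps~2 and~3 do not supply the rest.
\begin{itemize}
\item In Step~2 you assume that a small proportion of rank-$\ge2$ maps yields a \emph{large subspace} of maps of rank at most one. This is unjustified. The rank-$\le1$ locus is not closed under addition, and Lemma~\ref{parity:lem:rankone} applies only to subspaces all of whose members have rank at most one. So there is no subspace to feed into the classification.
\item Step~3 fails for a related reason. The symplectic group of $g$ is transitive on $U\setminus0$, but it does not preserve $u\mapsto\rank T_u$. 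You would need roughly $s/4$ extra clique members in the rank-$\ge2$ locus. You cannot steer them there without knowing how large that locus is, which is exactly the point at issue.
\item A minor point: $d\ge2$ does not follow from $\rank T_{u_0}\ge2$, since $M=\{0,T_{u_0}\}$ can occur. Your Step~1 does handle $d\le2$, so this causes no harm.
\end{itemize}

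\textbf{Missing idea: a density bound from low-degree polynomials.} Fix a $2\times2$ minor that is nonzero at $T_{u_0}$. In linear coordinates on $M\cong\F_2^d$, it is a polynomial of degree at most two that is not identically zero. By the Reed--Muller minimum-distance bound, it is nonzero on at least $2^{d-2}$ points. (The elementary induction giving density at least $2^{-D}$ for degree $D$ also suffices.)

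Take $m$ uniform on $M$. Then $\Pr(\rank m\ge2)\ge1/4$, while $\Pr(m=0)=2^{-d}\le1/4$ for $d\ge2$. Hence
\[
 \mathbb E\,2^{\rank m}\ge 2-\Pr(m=0)+2\Pr(\rank m\ge2)\ge\tfrac94 .
\]
For $d=1$ the mean is $(1+2^{\rank T_{u_0}})/2\ge5/2$. This is exactly the quantitative input your Step~2 was reaching for, and with it the averaging inequality finishes the proof.
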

\begin{proof}
Put $\mathcal M=\{T_u:u\in U\}$ and $d=\dim\mathcal M$.
Uniformly random $u\in U$ induces the uniform distribution on
$\mathcal M$. If $d=1$, the mean of $2^{\rank T_u}$ is at least
$(1+4)/2=5/2$.
If $d\ge2$, the zero matrix has probability at most $1/4$.
A $2\times2$ minor nonzero at some matrix is a nonzero Boolean
polynomial of degree at most two in coordinates on $\mathcal M$.
Such a polynomial is nonzero on at least one quarter of its inputs,
by the classical Reed--Muller minimum-distance bound
\cite[Section~II-B]{AbbeShpilkaYe2021}.
For completeness, the general degree-$D$ bound $2^{-D}$ follows
inductively by choosing a variable appearing in a highest-degree
monomial: on every input where its coefficient polynomial is one,
exactly one of the two extensions has value one.
Thus $\Pr(\rank T_u\ge2)\ge1/4$, and
\[
 \mathbb E\,2^{\rank T_u}
 \ge2-\Pr(T_u=0)+2\Pr(\rank T_u\ge2)\ge\frac94.
\]
Removing $u=0$, where the value is one, only increases this mean.
Apply \eqref{parity:eq:average-input}.
\end{proof}

\begin{lemma}[Linear spaces of maps of rank at most one]\label{parity:lem:rankone}
Suppose $\mathcal M\le\Hom(R,W)$ is nonzero and every member has
rank at most one. Then either all images lie in one fixed line of
$W$, or all maps vanish on one fixed hyperplane of $R$.
For the maps $T_u$ above, the latter alternative forces $\dim R=1$.
\end{lemma}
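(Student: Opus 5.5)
Two things need proving: a linear-algebra dichotomy for spaces of rank-one maps, and then a specialization to the maps $T_u$ that uses the hypotheses of this subsection ($\rad\beta=0$, $R=\rad(\lambda\beta)$ nonzero and isotropic, $W$ and $V$ elementary abelian).

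\emph{The dichotomy.} Pick a nonzero $T\in\mathcal M$ and write $T=w\otimes\phi$, meaning $T(r)=\phi(r)w$ with $w\in W$ and $\phi\in R^*$ both nonzero. Let $S=w'\otimes\psi$ be any other nonzero member. Then $T+S\in\mathcal M$ also has rank at most one. If $w\ne w'$ and $\phi\ne\psi$, the vectors $w,w'$ are independent (we are over $\F_2$), and so are $\phi,\psi$. Choose $r,r'\in R$ with $(\phi(r),\psi(r))=(1,0)$ and $(\phi(r'),\psi(r'))=(0,1)$. Then $(T+S)(r)=w$ and $(T+S)(r')=w'$, so $T+S$ has rank two, a contradiction. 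Hence every member shares either its image line or its kernel hyperplane with $T$.

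It remains to exclude mixing. Suppose $S_1=w_1\otimes\phi$ with $w_1\ne w$, and $S_2=w\otimes\psi$ with $\psi\ne\phi$. The pair $S_1,S_2$ must itself share a line or a kernel, but $w_1\ne w$ and $\phi\ne\psi$, which is impossible. So either every member has image $\F_2w$, or every member vanishes on $\ker\phi$.

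\emph{The specialization.} Assume every $T_u$ vanishes on a fixed hyperplane $H=\ker\phi<R$. Then $\beta(v,H)=0$ for all $v\in V$, since $T_u(H)=0$ for every $u$. Hence $H\le\rad\beta=0$, so $H=0$ and $\dim R=1$.

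The only delicate point is the mixing argument. It is handled by applying the pairwise lemma to the two hypothetical witnesses $S_1,S_2$ themselves, rather than only comparing each with $T$. The case $\mathcal M$ spanned by a single map falls into both alternatives, which is consistent with the statement.
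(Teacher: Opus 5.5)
Your proof is correct and follows essentially the same route as the paper. Both use the observation that two rank-one maps differing in both image line and kernel sum to a rank-two map, applied first against a fixed $T$ and then to the two hypothetical mixed witnesses, and both finish by placing a common kernel hyperplane inside $\rad\beta=0$. Your explicit uses of $\F_2$ (distinct nonzero vectors or functionals are automatically independent) and of the identity $\beta(v,H)=0$ for all $v\in V$ simply spell out steps the paper leaves terse.
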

\begin{proof}
The first assertion is the classical classification of rank-one
matrix spaces \cite[Theorem~9.22, p.~113]{Harris1992}; we recall its short proof.
Write a nonzero rank-one map as $a\otimes f$, where
$a\in W$, $f\in R^*$, and $(a\otimes f)(x)=f(x)a$.
For any other nonzero map $b\otimes q$, its sum with $a\otimes f$
has rank two if both pairs $(a,b)$ and $(f,q)$ are independent.
Thus every map shares the image line or the kernel hyperplane of
$a\otimes f$. If maps of both distinct types occur, their sum
has rank two. Hence one of the two alternatives holds throughout.
In our setting $\bigcap_u\ker T_u=0$: an element in this intersection
is orthogonal to $R$ and to every representative of $V/R$, hence
belongs to $\rad\beta=0$. A common hyperplane is therefore zero.
\end{proof}

If $\dim R=1$, \eqref{parity:eq:iso-input} gives
$|V|\le2^{(n+1)/2}$, to which the colouring bound applies.
The remaining common-image alternative is
\begin{equation}\label{parity:eq:line}
 \beta(R,V)\le L=\langle\ell\rangle\le W,\qquad |L|=2.
\end{equation}
\begin{lemma}[Reduction to a common image]\label{parity:lem:other-strict}
Outside the common-image case, every reduced nonzero nonscalar binary
map has $a(\beta)<L_n$ for $n\ge64$.
\end{lemma}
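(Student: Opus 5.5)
We argue by exhausting the cases left open by \eqref{parity:eq:line}. Let $\beta$ be reduced, nonzero and nonscalar with $\om(\beta)\le n$, $n\ge64$ even, so that $L_n=2^{n/2-1}$. The cases are treated in the following order.

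\emph{Step 1: a nonisotropic scalar radical.} If some nonzero scalarization has a nonisotropic radical, Lemma~\ref{parity:lem:noniso} gives $a(\beta)\le2n2^{n/3}$. This is below $2^{n/2-1}$ as soon as $4n<2^{n/6}$, which holds at $n=64$ and persists for larger $n$.

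\emph{Step 2: non-elementary targets.} If $W$ is not elementary abelian, Lemma~\ref{parity:lem:exponent} supplies a scalarization with nonzero radical and rank $2s$, where $n\ge5s+1$. If that radical is nonisotropic, Step~1 applies. Otherwise \eqref{parity:eq:iso-input} gives
\[
 a(\beta)\le1+\frac{n4^s}{2s+1}\le1+n2^{2(n-1)/5},
\]
and this is below $L_n$ because $2^{n/10}$ dominates $n$ at $n=64$.

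\emph{Step 3: elementary abelian targets.} From now on $V$ and $W$ are $\F_2$-spaces. Since $\beta$ is nonscalar, $\dim W\ge2$. We distinguish two cases.

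\begin{itemize}
\item If every nonzero scalarization is nondegenerate, Lemma~\ref{lem:scalarisation} gives $n\ge4(s-1)+1$ and $a(\beta)\le4^s$, hence $a(\beta)\le2^{(n+3)/2}$. This is not small enough, so here I would use the colouring bound. Lemma~\ref{parity:lem:ramsey} with $N=|V|=4^s\le2^{(n+3)/2}$ and a moderate $k$ (say $k=16$) gives $N/k+Q_k(n)\le2^{(n+3)/2-4}+\binom{n+15}{15}$. This is $<L_n$ for $n\ge64$ once the binomial term is checked numerically. Alternatively, find isotropic planes inside the sharper clique count, as in the proof of Proposition~\ref{prop:isotropic-index}.
\item Otherwise fix $\lambda$ with degenerate scalarization. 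By Step~1 we may take its radical $R\ne0$ isotropic. If some $T_u$ has rank at least two, Lemma~\ref{parity:lem:ranktwo} gives $n\ge\tfrac94(2s+1)$. Then \eqref{parity:eq:iso-input} gives $a(\beta)\le1+n4^s/(2s+1)\le1+n2^{4n/9}$, and $2n2^{-n/18}<1$ for $n\ge64$ closes this case. If every $T_u$ has rank at most one, Lemma~\ref{parity:lem:rankone} leaves either the common-image case \eqref{parity:eq:line}, which is excluded by hypothesis, or $\dim R=1$. In the latter case $n\ge4s+1$ gives $|V|=2^{2s+1}\le2^{(n+1)/2}$. Lemma~\ref{parity:lem:ramsey} with $k=4$, say, gives $a(\beta)\le2^{(n+1)/2}/4+\binom{n+3}{3}=2^{n/2-1}/\sqrt2+O(n^3)<L_n$, again valid from $n=64$.
\end{itemize}

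\emph{Main obstacle.} The hard step is the fully nondegenerate nonscalar case in Step~3. The crude bound $4^s$ sits a factor of about $8$ above $L_n$, so the estimate \eqref{eq:nonsingclique} must be sharpened. The colouring bound needs $|V|\le2^{n/2+O(1)}$ with enough slack for $k$ to absorb the constant, while the polynomial $Q_k(n)$ must stay below $2^{n/2}$ at $n=64$. This forces a careful choice of $k$, which I would settle by the explicit numerics deferred to the appendix. If that fails, I would fall back on the isotropic-plane argument: for $s\ge2$ it improves the clique bound to roughly $n\ge4s+O(1)$ with an isotropic subgroup of index $2^{2s-2}$. Combined with the coset cover \eqref{eq:budget}, this bounds $a(\beta)$ by $n2^{2s-2}$, but that still needs the extra clique gain from $\dim W\ge2$ to beat $L_n$.
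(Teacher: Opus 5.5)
Your case split is the paper's, and Step~1 and the $\dim R=1$ subcase (colouring with $k=4$, giving $L_n/\sqrt2+\binom{n+3}{3}$) are correct. However, three of the numerical closings are false as written at the threshold, and these closings are the whole content of this lemma.

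\textbf{Discarding the denominator.} In Step~2 and in the rank-$\ge2$ subcase you drop the denominator $2s+1$ of \eqref{parity:eq:iso-input}. This leaves a stray factor $n$ that the exponential margin cannot absorb near $n=64$.
\begin{itemize}
\item In Step~2 your normalized bound is $2n\,2^{-n/10-2/5}$, which is about $1.15$ at $n=64$. The comparison you need is $n<2^{n/10-3/5}$, not $n<2^{n/10}$. Rounding $s$ does not rescue it either, since $66\cdot4^{13}>2^{32}=L_{66}$.
\item In the rank-$\ge2$ case your normalized bound is $2^{1-n/2}+2n\,2^{-n/18}$, and $2n\,2^{-n/18}\approx10.9$ at $n=64$; it drops below one only for $n$ of order $150$. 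Using integrality, $s\le13$ still gives $1+64\cdot4^{13}>2^{31}=L_{64}$.
\end{itemize}
The repair, which is what the paper does, is to keep the denominator. The quantity $(4^s-1)/(2s+1)$ increases with $s$, so it is maximal at the largest admissible value $j$, namely $\lfloor(n-1)/5\rfloor$, resp.\ $\lfloor2n/9-1/2\rfloor$. Moreover $n/(2j+1)<3$ for $n\ge64$. This yields $a(\beta)\le1+3\cdot2^{2n/5}$ and $a(\beta)\le1+\tfrac32\,2^{4n/9}$. Their ratios to $2^{n/2-1}$ are $2^{1-n/2}+6\cdot2^{-n/10}$ and $2^{1-n/2}+3\cdot2^{-n/18}$, both below one.

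\textbf{The choice of $k$.} In the nondegenerate nonscalar case, $k=16$ fails badly: $Q_{16}(64)=\binom{79}{15}\approx2^{52}$, while $L_{64}=2^{31}$. The Ramsey term has degree $k-1$ in $n$, so $k$ should be as small as the main term $|V|/k$ permits.
\begin{itemize}
\item Integrality gives $s\le\lfloor(n+3)/4\rfloor$, hence $|V|\le4L_n$. This is sharper than your $2^{(n+3)/2}\approx5.66\,L_n$ for even $n$.
\item With this bound, $k=6$ gives $a(\beta)/L_n\le\tfrac23+2^{1-n/2}(n+5)^5/120<\tfrac23+\tfrac16$.
\item Even with your cruder bound on $|V|$, $k=6$ or $k=7$ works: the ratios are about $0.95$ and $0.87$ at $n=64$, decreasing afterwards.
\end{itemize}
So this case is not the main obstacle you describe. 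The isotropic-plane fallback, which you concede does not close, is not needed.
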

\begin{proof}
We first list the cases to show that none is omitted. If the
target has exponent at least four, apply
Lemma~\ref{parity:lem:exponent}. Otherwise $V$ and $W$ are
binary vector spaces. Either every nonzero scalarization is
nondegenerate, or some scalarization has a nonzero radical $R$.
Such a radical is nonisotropic when $\beta|_R\ne0$.
If it is isotropic, either some $T_u$ has rank at least two or
all $T_u$ have rank at most one. In the latter situation,
Lemma~\ref{parity:lem:rankone} gives a common image line or a
common kernel hyperplane. Reducedness forces the latter to be
zero, so $\dim R=1$. These are all the possibilities.

\emph{Cases with extra clique growth.}
A nonisotropic scalar radical gives $a(\beta)\le2n2^{n/3}$ by
Lemma~\ref{parity:lem:noniso}.
If the target has exponent at least four, Lemma~\ref{parity:lem:exponent}
gives $s\le(n-1)/5$ and nonzero radical. If it is isotropic,
use the increase of $(4^s-1)/(2s+1)$ in \eqref{parity:eq:iso-input}.
With $j=\lfloor(n-1)/5\rfloor$, $n/(2j+1)<3$ for $n\ge64$, so
$a(\beta)\le1+3\,2^{2n/5}$. Otherwise use the preceding case.
For an interaction of rank at least two, put
$j=\lfloor2n/9-1/2\rfloor\ge s$ by Lemma~\ref{parity:lem:ranktwo}.
Again $n/(2j+1)<3$, giving $a(\beta)\le1+(3/2)2^{4n/9}$.
The three ratios to $2^{n/2-1}$ are bounded by
\[
 4n2^{-n/6},\qquad 2^{1-n/2}+6\,2^{-n/10},\qquad
 2^{1-n/2}+3\,2^{-n/18}.
\]
Lemma~\ref{lem:separation-numerics} puts each below one for $n\ge64$.

\emph{Cases with a small domain.}
If all scalarizations are nondegenerate and the map is nonscalar,
Lemma~\ref{lem:scalarisation} gives
$|V|\le4^{\lfloor(n+3)/4\rfloor}\le4L_n$.
The common-kernel case has $|V|\le2^{(n+1)/2}\le4L_n$.
Lemma~\ref{parity:lem:ramsey} with $k=6$ gives
\[
 \frac{a(\beta)}{L_n}\le\frac23+\frac{\binom{n+5}{5}}{L_n}
 \le\frac23+\frac{2^{1-n/2}(n+5)^5}{120}.
\]
Lemma~\ref{lem:separation-numerics} bounds the error by $1/6$
for $n\ge64$, proving the strict inequality in every case.
\end{proof}

\subsection{Covering the common-image fibres}\label{sec:parity-pairing}

Only the common-image case remains. Put $r=\dim R$ and
define the linear map $f:U\to R^*$ by
\[
 \beta(v,x)=f(v+R)(x)\ell,\qquad x\in R.
\]
Its transpose $R\to U^*$ is injective, because $\rad\beta=0$.
Hence $f$ is surjective, $r\le2s$, and
\[
 C=\ker f,\qquad c=\dim C=2s-r.
\]
Since $R=\rad(\lambda\beta)$, we have $\lambda(\ell)=0$.

The kernel dimension $c$ measures the number $2^c$ of cosets
of $R$ having a prescribed interaction with $R$. The fibres of
$f$ group these cosets together. Crucially, the zero fibre is
itself a subgroup, so we can colour all of its elements at once.
In a nonzero fibre, two representatives of each coset suffice.

\begin{lemma}[Covering interaction fibres]\label{parity:lem:fibre-colouring}
In the common-image case,
\begin{equation}\label{parity:eq:coset-count}
 a(\beta)\le2\cdot4^s-2^c.
\end{equation}
Writing $Q_3(n)=\binom{n+2}{2}$, one also has
\begin{equation}\label{parity:eq:fibre-effective}
 \frac{a(\beta)}{4^s}
 \le\frac23+\frac{2^{-r}}3+Q_3(n)2^{-c}.
\end{equation}
\end{lemma}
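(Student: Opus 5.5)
Our plan is to cover $V$ coset by coset of $R$, grouping the $4^s$ cosets $v+R$ according to the fibres of the surjection $f:U\to R^*$. There are $2^r$ fibres, each a coset of $C=\ker f$ with $2^c$ elements of $U$. The fibre covers then add up to the stated total.

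\emph{The zero fibre.} Let $\tilde C\le V$ be the preimage of $C$. For $v\in\tilde C$ we have $\beta(v,R)=0$, so $R$ is central in $\beta|_{\tilde C}$. The clique and cover structure of $\tilde C$ is therefore that of $\tilde C/R\cong C$. We do not need an optimal cover of $C$; the crude count of one isotropic subgroup $\langle v,R\rangle$ per coset is enough. Each such subgroup is isotropic, because $R$ is isotropic and $\beta(v,R)=0$. This costs $2^c$ subgroups, possibly minus one since the coset $R$ itself is already covered.

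\emph{The nonzero fibres.} For $u$ with $f(u)\ne0$, equation \eqref{parity:eq:coset-exact} gives $N(u)=2$. The coset $v+R$ splits into two classes $v+\ker f(u)$ and $v+x+\ker f(u)$. Each class lies in an isotropic subgroup $\langle v',\ker f(u)\rangle$, using $\beta(v',\ker f(u))=0$ and the isotropy of $R$. So each such coset costs $2$ subgroups. Over the $4^s-2^c$ cosets in nonzero fibres this gives $2(4^s-2^c)$. Together with the zero fibre we obtain $2\cdot4^s-2^{c+1}+2^c=2\cdot4^s-2^c$, which is \eqref{parity:eq:coset-count}.

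\emph{The effective bound.} For \eqref{parity:eq:fibre-effective} we improve the nonzero fibres using Lemma~\ref{parity:lem:ramsey} with $k=3$. Fix a nonzero fibre $F=f^{-1}(\varphi)$, and let $H=\ker\varphi$, a hyperplane of $R$. Every coset in $F$ has the same kernel $H$, so $F$ is a union of $2^{c+1}$ cosets of $H$. On these cosets of $H$ the orthogonality depends only on the induced map on the quotient by $H$. For each triple of pairwise orthogonal cosets of $H$ inside the fibre, the three cosets together with $H$ span an isotropic subgroup. Thus colouring the orthogonality graph on the $2^{c+1}$ cosets of $H$ by independent triples covers the fibre with at most $2^{c+1}/3+Q_3(n)$ subgroups.

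Summing over the $2^r-1$ nonzero fibres and adding the zero fibre gives
\[
 \tfrac23\,4^s+2^c+2^rQ_3(n)-\tfrac23\,2^c,
\]
up to lower-order terms. Dividing by $4^s=2^{r+c}$ gives $\tfrac23+\tfrac{2^{-r}}3+Q_3(n)2^{-c}$, as claimed.

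\emph{Main obstacle.} The hard step is justifying the independent-set colouring. We must check that a set of pairwise orthogonal vectors drawn from distinct cosets of $H$ in one fibre, together with $H$, generates an isotropic subgroup. We must also check that the clique number of this coset graph is at most $n$, which holds because cliques of cosets lift to cliques of $V$. Isotropy needs $\beta(v,H)=0$ for every $v$ in the fibre, which holds because $f$ is constant on the fibre, together with bilinearity. Finally, the bookkeeping of the constants, namely the $-1$ savings and the replacement of $2^{c+1}/3$ by its ceiling, must be tuned so the inequality holds exactly as stated.
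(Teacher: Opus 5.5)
Your proposal is correct and follows essentially the paper's proof. The paper also uses one subgroup $\langle R,v\rangle$ per coset in the zero fibre and two per coset elsewhere for \eqref{parity:eq:coset-count}. For \eqref{parity:eq:fibre-effective} it applies Lemma~\ref{parity:lem:ramsey} with $k=3$ to the $2^{c+1}$ cosets of $\ker\varphi$ in each nonzero fibre; its set $S_\varphi$ is just a transversal of these cosets.

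The constant bookkeeping you flag is harmless, because that lemma's bound already has $N/k$ rather than a ceiling. Also, the graph you colour should be the nonorthogonality graph: its independent sets are the pairwise orthogonal triples you actually use.
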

\begin{proof}
Let $\pi:V\to U$ be the quotient map. The zero fibre
$V_C=\pi^{-1}(C)$ has $2^c$ cosets of $R$, and each is contained
in an isotropic subgroup $\langle R,v_u\rangle$.

For each nonzero $\varphi\in R^*$ choose $x\in R$ with
$\varphi(x)=1$. If $v_u$ is a representative of
$u\in f^{-1}(\varphi)$, then
\[
 S_\varphi=\{v_u,v_u+x:u\in f^{-1}(\varphi)\}
\]
consists of $2^{c+1}$ vectors orthogonal to $\ker\varphi$.
An independent set in its nonorthogonality graph, together with
$\ker\varphi$, spans an isotropic subgroup covering the
corresponding cosets. By Lemma~\ref{parity:lem:ramsey} with $k=3$,
this fibre costs at most $2^{c+1}/3+Q_3(n)$ subgroups.
Adding the $2^r-1$ nonzero fibres and dividing by
$4^s=2^{c+r}$ gives \eqref{parity:eq:fibre-effective}.
Alternatively, covering each coset separately costs one subgroup
in the zero fibre and two in every other fibre, proving
\eqref{parity:eq:coset-count}.
\end{proof}

For small $c$, we extract a dual-number plane while preserving
the interaction-kernel dimension in its orthogonal complement.
The required decomposition is proved explicitly below; compare
\cite[Sections~3.4--3.6]{BrooksbankMaglioneWilson2017}.

\begin{lemma}[Symplectic peeling]\label{parity:lem:dual}
In the common-image case, with $s$ and $c$ as above,
\begin{equation}\label{parity:eq:dual}
 \om(\beta)\ge 4s+1+\max\{s-c,0\}.
\end{equation}
\end{lemma}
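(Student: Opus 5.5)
Build an explicit clique in $V$ by induction on $s$, peeling off symplectic planes of $U=V/R$ one at a time and combining the pieces by the orthogonal-sum construction of Lemma~\ref{lem:scalar}. That construction needs only vanishing cross-pairings, so it is valid for $W$-valued maps.

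\emph{Base count.} Lemma~\ref{lem:scalarisation} (isotropic-radical gain) already gives $4s+1$: take a clique of $2s+1$ cosets in $U$, exactly one inside a hyperplane containing $C$. Each of the other $2s$ cosets $u\notin C$ has $T_u\ne0$, so by \eqref{parity:eq:coset-exact} it contains two nonorthogonal points. The remaining task is the surplus $\max\{s-c,0\}$, and I would attach it to \emph{dual-number planes}.

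\emph{Peeling step.} Suppose $s>c$. Since $\dim C=c<s$, the subspace $C$ cannot contain a Lagrangian. Choose $u_1,u_2\in U$ with $g(u_1,u_2)=1$, $f(u_1),f(u_2)$ linearly independent in $R^*$, and such that the plane $P=\langle u_1,u_2\rangle$ meets $C$ trivially. Such a pair should exist because $\dim C<s$: a symplectic basis adapted to a complement of $C$ has at least $s-c$ hyperbolic pairs avoiding $C$. Pick $x_1,x_2\in R$ dual to $f(u_1),f(u_2)$, and lift $u_1,u_2$ to $v_1,v_2\in V$. Let $V_P$ be the span of $v_1,v_2,x_1,x_2$, and let $V^\perp$ be the orthogonal complement of $V_P$ under $\beta$. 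Within $V_P$ the three cosets $v_1,v_2,v_1+v_2$ each contain two points, and these six points are pairwise nonorthogonal. The $\lambda$-pairing handles points in different cosets. Within a coset, $\beta(v_i,v_i+x_i)=\ell$. What must be checked is that points in different cosets are nonorthogonal even after adding $x_j$. For this, use that $\beta$-values lie in $\lambda^{-1}(1)$ coset components plus $\ell$-multiples, and $\lambda(\ell)=0$, so the $\lambda$-component stays $1$. This gives a $6$-point configuration in the plane, versus the $5$ points ($2+2+1$) that the base count extracts. That is exactly a gain of one.

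\emph{Orthogonal complement and induction.} Next verify that $V^\perp$, taken modulo $\langle x_1,x_2\rangle\cap$(its radical), is again in the common-image case. Its scalar quotient should have dimension $2(s-1)$ and interaction kernel of dimension still $c$. This is why $P$ is chosen to avoid $C$: the new kernel is $C\cap P^{\perp g}$, and after adjusting lifts by elements of $R$ this should be all of $C$. Then glue, via the $(C_1\times\{v\})\cup(\{0\}\times(C_2\setminus\{v\}))$ trick, the $6$-point clique with the inductively obtained clique of size $4(s-1)+1+\max\{s-1-c,0\}$. The result has size at least $4s+1+\max\{s-c,0\}$. Once $s\le c$, fall back on the base count. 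The hyperplane condition of Lemma~\ref{lem:scalar} needs some care in the gluing: keep $C$ inside the prescribed hyperplane of the final block.

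\emph{Main obstacle.} The hardest part is the bookkeeping that the orthogonal complement preserves both the common-image structure and $\dim C=c$ while $s$ drops by one. In particular, the lifts $v_i$ must be chosen (shifting by elements of $R$ and of the complement) so that $V_P$ and $V^\perp$ genuinely split $V$ with zero cross-pairings in $W$, not only under $\lambda$. To get this splitting, I would first try a Gram–Schmidt-type correction using the surjectivity of $f$.
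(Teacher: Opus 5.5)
Your architecture matches the paper's: peel off $s-c$ nondegenerate planes of $U$, take two points in each of the three nonzero cosets over each plane, glue with the $a+b-1$ construction, and finish with $4c+1$ from the isotropic-radical gain. The gap is at the step you flag as the main obstacle, and your proposed repair cannot close it.

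You ask for a complement with zero cross-pairings in all of $W$. Correcting lifts by elements of $R$ changes $\beta(v_i,\cdot)$ only by multiples of $\ell$. The components of $\beta(v_i,V)$ transverse to $\langle\ell\rangle$ depend only on $u_i$. Nothing in the common-image hypotheses stops $\beta$ from carrying, besides $\lambda\beta$ and a coordinate detecting $\ell$, many further coordinates given by generic alternating forms on $U$. These can make $\dim\beta(v,V)$ large for every $v\notin R$. Then the $\beta$-orthogonal complement of $V_P$ has codimension far above $4$, so its scalar rank drops below $2(s-1)$, and the inductive hypothesis gives nothing.

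The missing idea is to discard the rest of $W$. Fix $\mu:W\to\F_2$ with $\mu(\ell)=1$ and work with $(g,h)=(\lambda\beta,\mu\beta)$. Since $(g,h)(x,y)\neq0$ implies $\beta(x,y)\neq0$, every $(g,h)$-clique is a $\beta$-clique. Moreover $(g,h)$ is itself a reduced common-image instance with the same $R$, $f$, $s$, $c$: a vector in its radical lies in $R$, and then $\beta(v,V)\le\langle\ell\rangle$ with $\mu(\ell)=1$ gives $v\in\rad\beta=0$. Orthogonality now only has to be arranged for two scalar forms, which is achievable.

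Second, your condition on the plane is the wrong one. It is not enough that $P$ meets $C$ trivially; you need $P\le C^{\perp g}$, i.e.\ $C\le B:=P^{\perp g}$. Shifting lifts by $R=\rad g$ cannot change $C\cap P^{\perp g}$. If $C\not\le B$, even the $(g,h)$-splitting is impossible:

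\begin{itemize}
\item a complement of scalar rank $2(s-1)$ that is $g$-orthogonal to the $P$-block must project onto $B$;
\item the (at least two) directions $x\in R$ that your six points need inside the $P$-block must therefore be $h$-orthogonal to lifts of all of $B$;
\item under $R\cong\operatorname{Ann}(C)\subset U^*$, $x\mapsto h(x,\cdot)$, such $x$ lie in $\operatorname{Ann}(B)\cap\operatorname{Ann}(C)=\operatorname{Ann}(B+C)$, which has dimension less than two.
\end{itemize}

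For $c<s$ a correct plane exists. The restriction of $g$ to $C^{\perp g}$ has radical $C\cap C^{\perp g}$ of dimension at most $c$, so its nondegenerate part has dimension at least $2s-2c\ge2$. Any such plane automatically meets $C$ trivially.

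With both fixes, take $P\le C^{\perp g}$ and choose $x_1,x_2$ dual to $f(u_1),f(u_2)$ inside $\operatorname{Ann}(B)$, which is two-dimensional and pairs perfectly with $P$. Then $V_P$ and its $(g,h)$-orthogonal complement $\{v:\pi(v)\in B,\ h(v,v_1)=h(v,v_2)=0\}$ do split $V$. The complement is a common-image instance with parameters $(s-1,c)$, and the decomposition is essentially the paper's $V=T\oplus H$.
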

\begin{proof}
Project the target onto two scalar coordinates
$(\lambda,\mu)$, choosing $\mu(\ell)=1$; write
$g=\lambda\beta$ and $h=\mu\beta$.
Nonorthogonality for $(g,h)$ implies nonorthogonality for $\beta$.
We have $R=\rad g$, $h(R,R)=0$, and $g$ induces a nondegenerate
form on $U=V/R$ of dimension $2s$. The interaction identifies
\begin{equation}\label{eq:peeling-annihilator}
 R\overset{\sim}{\longrightarrow}\operatorname{Ann}(C)\subset U^*,
 \qquad x\longmapsto (u\mapsto h(x,\widetilde u)).
\end{equation}
Indeed the map is well-defined and annihilates $C=\ker f$;
it is injective because $\beta(R,V)\le\langle\ell\rangle$
and $\rad\beta=0$. Both sides have dimension
$r=2s-c$, so it is an isomorphism.

Suppose first that $c<s$. The restriction of $g$ to
$C^{\perp g}$ has radical $C\cap C^{\perp g}$, of dimension
at most $c$. Consequently its nondegenerate part has dimension
at least $2s-2c\ge2$. Choose a nondegenerate symplectic plane
$A\le C^{\perp g}$ and let $B=A^{\perp g}$.
Then $U=A\perp_g B$ and $C\le B$. Under
\eqref{eq:peeling-annihilator}, set
\[
 R_A=\operatorname{Ann}(B),\qquad
 R_B=\operatorname{Ann}(A)\cap\operatorname{Ann}(C),
\]
where the annihilators are understood as subspaces of $R$.
They give $R=R_A\oplus R_B$, with $\dim R_A=2$ and
$\dim R_B=r-2$, and the $h$-pairing between $A$ and $R_A$
is perfect. The cross pairings $h(B,R_A)$ and $h(A,R_B)$ vanish.

We now choose lifts of $A$ and $B$ so that they are orthogonal
for \emph{both} $g$ and $h$. Start with arbitrary linear sections
$\sigma_A:A\to V$, $\sigma_B:B\to V$.
Since the $h$-pairing $A\times R_A$ is perfect, we may add a
linear map $A\to R_A$ to $\sigma_A$ so that
$h(\sigma_A(A),\sigma_A(A))=0$: explicitly, split the alternating
form on $A$ as $q(a,b)+q(b,a)$ and use perfection to realize
$q$ by the correction. We may then add a linear map
$B\to R_A$ to $\sigma_B$ so that
$h(\sigma_A(A),\sigma_B(B))=0$.
Neither correction alters $g$ or the stated pairings with $R$.
It follows that
\[
 V=T\oplus H,\qquad
 T=\sigma_A(A)\oplus R_A,\quad
 H=\sigma_B(B)\oplus R_B,
\]
is an orthogonal direct sum for the two-coordinate map $(g,h)$.

On $T$, its two alternating forms are, after a symplectic
identification of $R_A$ with $A$,
\[
 ((a,x),(b,y))\longmapsto
 \bigl(g(a,b),\,g(a,y)+g(x,b)\bigr).
\]
This is one symplectic plane over the dual numbers
$\F_2[\eps]/(\eps^2)$. It has a six-point nonorthogonal set:
\[
 \{(1,t):t\in\F_2[\eps]/(\eps^2)\}
 \ \cup\ \{(0,1),(\eps,1)\}.
\]
On $H$, the scalar form $g$ has rank $2(s-1)$ and radical
$R_B$. The restriction of $h$ pairs $R_B$ injectively with $B$:
an element of $R_B$ annihilating $B$ would also annihilate $A$
and hence, by \eqref{eq:peeling-annihilator}, be zero.
To check reducedness explicitly, any vector orthogonal to $H$
for both $g$ and $h$ first lies in $R_B$, because $g|_B$ is
nondegenerate, and then vanishes by this injective pairing.
Thus the residual two-coordinate map is reduced, its scalar
radical $R_B$ is isotropic, and its interaction is again in the
common image line. The interaction map on $B$ is onto $R_B^*$, so its
kernel has dimension $2(s-1)-(r-2)=c$.
Thus $H$ is another common-image instance with parameters
$(s-1,c)$ (or the zero space if $s=1,c=0$).

For alternating maps on orthogonal summands, cliques of sizes
$a$ and $b$ combine into one of size $a+b-1$ by the construction
$(X\times\{y_0\})\cup(\{0\}\times(Y\setminus\{y_0\}))$.
The six-point set therefore gives
\[
 \om(\beta)\ge 5+\om((g,h)|_H).
\]
Repeat the peeling exactly $s-c$ times. The terminal remainder
has scalar rank $2c$ and isotropic radical of dimension $c$.
If $c>0$, the isotropic-radical gain in
Lemma~\ref{lem:scalarisation} gives a clique of size $4c+1$;
if $c=0$, the remainder is zero and its one-point set plays
the same role. Hence
\[
 \om(\beta)\ge5(s-c)+4c+1=5s-c+1.
\]
For $c\ge s$, the original isotropic-radical estimate
$\om(\beta)\ge4s+1$ already proves the claimed inequality.
\end{proof}

The peeling bound controls the two cases in which the elementary
coset estimate is not yet sufficient.

\begin{lemma}[Common-image separation]\label{parity:lem:fibre-threshold}
In the common-image case, $a(\beta)<L_n$ for every $n\ge64$.
\end{lemma}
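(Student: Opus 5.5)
The plan is to combine the two estimates of Lemma~\ref{parity:lem:fibre-colouring} with the clique bound of Lemma~\ref{parity:lem:dual}. The split is on the size of $4^s$ relative to $L_n=2^{n/2-1}$, for even $n\ge64$. The odd case is not needed here: $L_{n+1}=L_n$ and monotonicity in the clique bound handle it, as in the rest of the section.

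\emph{Step 1: the admissible range of $s$.} By Lemma~\ref{parity:lem:dual}, $n\ge\om(\beta)\ge4s+1$, so $4s\le n-1$. Since $n$ is even, either $4s\le n-4$ or $4s=n-2$; the latter occurs only when $n\equiv2\pmod 4$.

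\emph{Step 2: the case $4s\le n-4$.} Here $4^s\le2^{n/2-2}=L_n/2$. The coarse coset count \eqref{parity:eq:coset-count} then gives
\[
 a(\beta)\le2\cdot4^s-2^c<2\cdot4^s\le L_n .
\]
This is strict because $2^c\ge1$.

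\emph{Step 3: the boundary case $4s=n-2$.} Now $4^s=L_n$, so the coarse bound only gives $a(\beta)<2L_n$, and the refined estimate \eqref{parity:eq:fibre-effective} has to carry the argument.

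First I would extract a lower bound on $c$ from peeling. The bound \eqref{parity:eq:dual} yields $n\ge4s+1+\max\{s-c,0\}$, hence $\max\{s-c,0\}\le n-4s-1=1$, and so
\[
 c\ge s-1=\frac{n-6}{4}.
\]
Next, the radical $R$ is nonzero by the standing choice of $\lambda$, so $r\ge1$ and $2^{-r}/3\le1/6$. Substituting into \eqref{parity:eq:fibre-effective} gives
\[
 \frac{a(\beta)}{L_n}=\frac{a(\beta)}{4^s}\le\frac23+\frac16+\binom{n+2}{2}2^{-(n-6)/4}.
\]
It therefore suffices to show that $\binom{n+2}{2}2^{-(n-6)/4}<1/6$ for $n\ge64$.

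\emph{Step 4: the numerical inequality.} At $n=64$ the quantity $\binom{n+2}{2}2^{-(n-6)/4}$ is $2145\cdot2^{-14.5}\approx0.093$. When $n$ increases by $4$ (the step between consecutive values with $n\equiv2\pmod4$), the exponential factor halves, while $(n+2)(n+1)$ grows by a factor well below $2$. The bound therefore persists, and this comparison would be recorded in Lemma~\ref{lem:separation-numerics} alongside the others.

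\emph{Main obstacle.} The delicate point is Step~3. In that case the trivial bound misses by a factor of two, and the proof depends on the peeling gain $s-c$ being forced to be at most one. That gain makes $c$ linear in $n$, so the Ramsey error term $Q_3(n)2^{-c}$ is killed; at the same time $r\ge1$ keeps the zero-fibre contribution $2^{-r}/3$ at most $1/6$. I would double-check that the margin $2/3+1/6+0.093<1$ indeed holds at the first admissible value $n=66$, which gives $s=16$ and $c\ge15$. Since $n=64$ is not $\equiv2\pmod4$, it falls entirely under Step~2.
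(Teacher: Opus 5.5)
Your proof is correct and follows essentially the paper's argument. It uses the coarse count \eqref{parity:eq:coset-count} when $n-4s\ge3$; otherwise peeling forces $c\ge s-1$, and \eqref{parity:eq:fibre-effective} with $2^{-r}/3\le1/6$ finishes via $2278/32768<1/6$ at $s=16$. The one difference is that you treat even $n$ and pass to odd $n$ through $L_{n+1}=L_n$, whereas the paper covers all $n\ge64$ at once by writing $n=4s+\delta$ and splitting into $\delta\ge3$ and $\delta\in\{1,2\}$.
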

\begin{proof}
As the scalar radical is isotropic, $n\ge4s+1$; by
Lemma~\ref{parity:lem:dual} we have
\begin{equation}\label{eq:peeling-clique}
 n\ge4s+1+\max\{s-c,0\}.
\end{equation}
Write $n=4s+\delta$, where $\delta\ge1$.
If $\delta\ge3$, equation~\eqref{parity:eq:coset-count} yields
$a(\beta)<2\cdot4^s\le L_n$.
If $\delta\in\{1,2\}$, then $s\ge16$ because $n\ge64$,
and \eqref{eq:peeling-clique} forces $c\ge s-1$.
Applying \eqref{parity:eq:fibre-effective} gives
\[
 \frac{a(\beta)}{4^s}
 \le\frac23+\frac{2^{-r}}3+\binom{n+2}{2}2^{-c}
 \le\frac56+\binom{4s+4}{2}2^{1-s}<1.
\]
For the final inequality, the last term is $2278/32768<1/6$
at $s=16$ and decreases thereafter. In these two cases
$L_n=4^s$, proving the claim.
\end{proof}

\begin{proof}[Proof of Proposition~\ref{parity:prop:binary}]
Reduce the map. Lemmas~\ref{parity:lem:other-strict}
and~\ref{parity:lem:fibre-threshold} treat every nonzero nonscalar
case; the zero map has $a=1$.
For a scalar map, Lemma~\ref{lem:extraspecial} gives
$\om(\beta)=2s+1$ and $a(\beta)=2^s+1$.
Thus $s\le\lfloor(n-1)/2\rfloor$, with equality precisely
under the stated dimension condition.
\end{proof}

\section{Proof of the main theorem and open questions}\label{sec:completion}

\begin{proof}[Proof of Theorem~\ref{thm:parity}]
By Lemma~\ref{lem:finite-reduction}, it suffices to consider finite groups.
At even bounds $n\ge64$, Proposition~\ref{prop:binary-reduction}
either gives $a(G)<L_n$ or reduces to a class-two $2$-group
without changing $a$ or $\om$.
Proposition~\ref{parity:prop:binary} then gives $a(G)\le L_n+1$,
with equality precisely for a scalar map of dimension
$2m$, where $m=\lfloor(n-1)/2\rfloor$.
Lemma~\ref{lem:extraspecial} attains this value.
For odd $n\ge63$, apply the even case at $n+1$, noting that
$L_{n+1}=L_n$. This gives the remaining bounds and the same
characterization of equality.

For a class-two group the commutator map is reduced, so the equality
condition is exactly $G'\cong C_2$ and $G/Z(G)\cong\F_2^{2m}$.
Conversely, a normal subgroup of order two is central, and these
conditions give the required nondegenerate scalar form.
Finite reduction preserves both structural invariants.

\end{proof}

\subsection*{Open questions}

The threshold $63$ in Theorem~\ref{thm:parity} is sufficient,
but need not be optimal. Indeed the formula fails at $n=4,6$.
For $q\in\{3,5\}$, the dihedral group of order $2q$ has
clique number $q+1$: its $q$ reflections and any nonidentity
rotation form a clique, while a clique contains at most one
rotation and at most $q$ reflections. Since $a(G)\ge\om(G)$,
\[
 h(4)\ge4>3=L_4+1,\qquad
 h(6)\ge6>5=L_6+1.
\]

\paragraph{Question 1 (Optimal threshold).}
Determine the least integer
\[
 N_* = \min\bigl\{N\ge1:\ h(n)=2^{\lfloor(n-1)/2\rfloor}+1
                    \text{ for every }n\ge N\bigr\}.
\]
The proven bounds give $7\le N_*\le63$. In fact $N_*$ is odd:
if equality holds at $2m+2$, monotonicity and the extraspecial
lower bound imply
\[
 2^m+1\le h(2m+1)\le h(2m+2)=2^m+1.
\]
Thus $N_*\in\{7,9,11,\ldots,63\}$.

\paragraph{Question 2 (Small-parameter values).}
Determine $h(n)$ for $4\le n<63$. In particular, identify
all parameters where
$h(n)\ne 2^{\lfloor(n-1)/2\rfloor}+1$ and, where possible,
classify the groups attaining $h(n)$.

\appendix
\section{Numerical estimates}\label{app:numerics}

We verify the explicit inequalities used at the even threshold
$n=64$. All comparisons are elementary and are included to make
the effective constants transparent.

We will use the elementary logarithmic estimate
\begin{equation}\label{eq:log-lower}
 \log 2>2\left(\frac13+\frac1{81}\right)
 =\frac{56}{81}>\frac{11}{16},
\end{equation}
obtained by keeping the first two positive terms in the series
$\log 2=2\sum_{j\ge0}3^{-(2j+1)}/(2j+1)$.

\begin{lemma}[Effective separation bounds]\label{lem:separation-numerics}
For every integer $n\ge64$, each expression in the left column is
less than the constant in the right column.
\[
\begin{array}{c|c}
 \text{expression}&\text{upper bound}\\ \hline
 2^{1-n/2}+2n2^{-13n/90}&1/4\\
 (9/2)n2^{-7n/30}&1/32\\
 2^{1-n/2}(2^{7(n-1)/15}+1)&1/2\\
 2n2^{-5n/24}&1/64\\
 4n2^{-n/6}&1/4\\
 2^{1-n/2}+6\,2^{-n/10}&1/8\\
 2^{1-n/2}+3\,2^{-n/18}&1/2\\
 2^{1-n/2}(n+5)^5/120&1/6
\end{array}
\]
\end{lemma}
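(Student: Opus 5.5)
The plan is to reduce each of the eight inequalities to a check at the single point $n=64$, followed by a monotonicity argument. Every expression is a finite sum of terms of the form $P(n)\,2^{-\alpha n}$, where $P$ is a constant or a polynomial of low degree and $\alpha>0$. I would prove, term by term, that each term is nonincreasing for real $n\ge64$. Then the whole expression is maximized at $n=64$, and it remains to evaluate there.

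\textbf{Monotonicity.} For $P(n)=Cn^k$, the logarithmic derivative of $P(n)2^{-\alpha n}$ is $k/n-\alpha\log2$. This is negative as soon as $n>k/(\alpha\log 2)$. The smallest decay rate occurring with a polynomial factor is $\alpha=13/90$ with $k=1$, where the threshold is below $90/(13\cdot 0.69)<11$. For the last row, $k=5$ and $\alpha=1/2$, so the threshold is $10/\log 2<15$. The constant terms $2^{1-n/2}$, $6\cdot2^{-n/10}$, $3\cdot2^{-n/18}$ decrease trivially. The third row is $2^{1-n/2}+2^{1-7/15}\,2^{-n/30}$, a sum of decreasing terms. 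Where explicit comparisons with $\log 2$ are needed, I would use \eqref{eq:log-lower}, $\log2>11/16$; all of the thresholds above hold with large margin under that bound.

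\textbf{Evaluation at $n=64$.} I would bound each term by an explicit power of two, using crude estimates $2^{x}\ge 2^{\lfloor x\rfloor}$:
\begin{itemize}
\item first row: $13\cdot64/90>9.2$, so $128\cdot2^{-9.2}<2^{7-9}=1/4$ minus a margin, and $2^{-31}$ is negligible;
\item second row: $288\cdot2^{-14.9}<2^{8.2-14.9}<1/32$;
\item third row: $2^{-31}(2^{29.4}+1)\approx2^{-1.6}<1/2$;
\item fourth row: $128\cdot2^{-13.3}<2^{-6}$;
\item fifth row: $256\cdot 2^{-10.67}<2^{-2.6}<1/4$;
\item sixth row: $6\cdot2^{-6.4}\approx0.071<1/8$;
\item seventh row: $3\cdot2^{-3.55}\approx0.256<1/2$;
\item eighth row: $2^{-31}\cdot69^5/120$, where $69^5<1.57\cdot10^9<2^{30.6}$, so the term is below $2^{-0.4}/120<1/6$.
\end{itemize}

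\textbf{Main obstacle.} The delicate entries are those with the smallest margin, namely rows one, two and three. In particular, the first row gives $128\cdot 2^{-832/90}$ with $832/90\approx9.244$, so the value is about $2^{-2.244}\approx0.21$, against the bound $1/4$. For these rows I would avoid decimals and certify the fractional powers by integer comparisons. For the first row, $2^{832/90}>2^{9}\cdot2^{1/5}$, and $2^{1/5}>1.148$ follows from $1.148^5<2$; this gives a total below $0.25$ once the $2^{-31}$ term is added. The other two tight rows would be handled the same way, comparing $2^{a/b}$ with a rational number $q$ via $q^b$ against $2^a$.
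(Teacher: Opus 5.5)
Your proposal is correct and follows essentially the same route as the paper. The paper also shows each summand is decreasing on $[64,\infty)$ via the logarithmic derivative with $\log 2>11/16$, substitutes $n=64$, and handles the tight first row through $11/45>1/5$ and a fifth-power comparison (the paper uses $(9/8)^5<2$ where you use $1.148^5<2$). One cosmetic point: the last row involves $(n+5)^5$, so its logarithmic derivative is $5/(n+5)-(\log 2)/2$ rather than $5/n-(\log 2)/2$; this only lowers your threshold, so the argument is unaffected.
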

\begin{proof}
Every summand decreases on $[64,\infty)$: for the polynomial
factors use the logarithmic derivative
$d/n-\alpha\log2$ of $n^d2^{-\alpha n}$, or
$5/(n+5)-(\log2)/2$ for the last row.
The bound $\log2>11/16$ from \eqref{eq:log-lower}
suffices in each case.

It remains to substitute $n=64$.
For the first row, $2^{416/45}>2^9(9/8)$, since
$11/45>1/5$ and $(9/8)^5<2$; hence the value is less than
$2^{-31}+2/9<1/4$.
For the second, $448/30>14$ gives
$288/2^{14}<1/32$.
For the third, $2^{-8/5}<2^{-3/2}<3/8$, leaving
$3/8+2^{-31}<1/2$.
The fourth equals $2^{-19/3}<1/64$.
The next three follow respectively from
$64/6>10$, $64/10>6$ and $64/18>3$.
Finally, $69^5/(120\,2^{31})<80^5/(120\,2^{31})<1/6$.
\end{proof}

\section*{Acknowledgements and use of AI}
ChatGPT (OpenAI) and Aristotle (Harmonic) were used for exploratory
proof checking and consistency review; ChatGPT also assisted with
language and \LaTeX{} editing. All mathematical arguments used in this
article are stated and proved within the text. Responsibility for the
claims and their verification rests with the author.

\end{document}